\newcommand{\A}{{\mathbb{A}}}
\newcommand{\C}{{\mathbb{C}}}
\newcommand{\F}{{\mathbb{F}}}
\newcommand{\G}{\mathbb{G}}
\newcommand{\Pa}{{\mathbb{P}}}
\newcommand{\Q}{{\mathbb{Q}}}
\newcommand{\oQ}{\overline{\Q}}
\newcommand{\Z}{{\mathbb{Z}}}
\newcommand{\hZ}{\hat{\Z}}
\newcommand{\hM}{\hat{M}}
\newcommand{\aM}{M^{\ast}}
\newcommand{\aX}{X^{\ast}}
\newcommand{\aF}{\F^{\ast}}
\newcommand{\hP}{\hat{P}}
\newcommand{\hR}{\hat{R}}
\newcommand{\Ann}{\mathrm{Ann}\,}
\newcommand{\Ass}{\mathrm{Ass}\,}
\newcommand{\cl}{\mathrm{cl}\,}
\newcommand{\ddet}{\mathrm{det}}
\newcommand{\ddiv}{\mathrm{div}\,}
\newcommand{\even}{\mathrm{even}}
\newcommand{\id}{\mathrm{id}}
\renewcommand{\mod}{\;\mathrm{mod}\;}
\newcommand{\odd}{\mathrm{odd}}
\newcommand{\per}{\mathrm{per}}
\newcommand{\rank}{\mathrm{rank}}
\newcommand{\rk}{\mathrm{rk}}
\newcommand{\spec}{\mathrm{spec}\,}
\newcommand{\supp}{\mathrm{supp}\,}
\newcommand{\Aut}{\mathrm{Aut}}
\newcommand{\coker}{\mathrm{coker}\,}
\newcommand{\End}{\mathrm{End}\,}
\newcommand{\GL}{\mathrm{GL}}
\newcommand{\Hom}{\mathrm{Hom}}
\newcommand{\tht}{\mathrm{ht}}
\newcommand{\Imm}{\mathrm{Im}\,}
\newcommand{\Ker}{\mathrm{Ker}\,}
\newcommand{\Tor}{\mathrm{Tor}}
\newcommand{\tr}{\mathrm{tr}}
\newcommand{\Mh}{\mathcal{M}}
\newcommand{\Nh}{\mathcal{N}}
\newcommand{\Wh}{\mathrm{Wh}}
\newcommand{\ea}{\mathfrak{a}}
\newcommand{\eb}{\mathfrak{b}}
\newcommand{\emm}{{\mathfrak{m}}}
\newcommand{\ep}{\mathfrak{p}}
\newcommand{\opi}{\overline{\pi}}
\newcommand{\oGamma}{\overline{\Gamma}}
\newcommand{\oR}{\overline{R}}
\newcommand{\ox}{\overline{x}}
\newcommand{\oz}{\overline{z}}
\newcommand{\hA}{\hat{A}}
\newcommand{\oa}{\overline{a}}
\newcommand{\opartial}{\overline{\partial}}
\newcommand{\hpartial}{\hat{\partial}}
\newcommand{\hvarepsilon}{\hat{\varepsilon}}
\newcommand{\silo}{\xrightarrow{\sim}}
\newcommand{\hullet}{{\scriptscriptstyle \bullet\,}}
\newtheorem{theorem}{Theorem}[section]
\newtheorem{lemma}[theorem]{Lemma}
\newtheorem{prop}[theorem]{Proposition}
\newtheorem{cor}[theorem]{Corollary}
\newtheorem{remark}[theorem]{Remark}
\newtheorem*{rem}{Remark}
\newenvironment{example}{\noindent {\bf Example.}}{}
\newenvironment{proofof}{\noindent {\it Proof of}}{\mbox{}\hfill$\Box$}
\begin{document}
\title{$p$-adic limits of renormalized logarithmic Euler characteristics}
\author{Christopher Deninger\footnote{supported by CRC 878}}
\date{\ }
\maketitle
\section{Introduction} \label{sec:0}
Given a countable residually finite group $\Gamma$, we write $\Gamma_n \to e$ if $(\Gamma_n)$ is a sequence of normal subgroups of finite index such that any infinite intersection of $\Gamma_n$'s contains only the unit element $e$ of $\Gamma$. Given a $\Gamma$-module $M$ we are interested in the multiplicative Euler characteristics
\begin{equation}
\label{eq:1a}
\chi (\Gamma_n , M) = \prod_i |H_i (\Gamma_n , M)|^{(-1)^i}
\end{equation}
and the limit in the field $\Q_p$ of $p$-adic numbers
\begin{equation}
\label{eq:1b}
h_p := \lim_{n\to\infty} (\Gamma : \Gamma_n)^{-1} \log_p \chi (\Gamma_n , M) \; .
\end{equation}
Here $\log_p : \Q^{\times}_p \to \Z_p$ is the branch of the $p$-adic logarithm with $\log_p (p) = 0$. Of course, neither \eqref{eq:1a} nor \eqref{eq:1b} will exist in general. We isolate conditions on $M$, in particular \textit{$p$-adic expansiveness} which guarantee that the Euler characteristics $\chi (\Gamma_n , M)$ are well defined. That notion is a $p$-adic analogue of expansiveness of the dynamical system given by the $\Gamma$-action on the compact Pontrjagin dual $X = \aM$ of $M$. Under further conditions on $\Gamma$ we also show that the renormalized $p$-adic limit in \eqref{eq:1b} exists and equals the \textit{$p$-adic $R$-torsion} $\tau^{\Gamma}_p (M)$ of $M$. The latter is a $p$-adic analogue of the Li--Thom $L^2$ $R$-torsion $\tau^{\Gamma}_{(2)} (M)$ of a $\Gamma$-module $M$ which they related to the entropy $h$ of the $\Gamma$-action on $X$. We view the limit $h_p$ in \eqref{eq:1b} as a version of entropy which values in the $p$-adic numbers and our formula
\[
h_p = \tau^{\Gamma}_p (M)
\]
as an analogue of the Li--Thom formula of \cite{LT},
\[
h = \tau^{\Gamma}_{(2)} (M)
\]
in the expansive case. The reason for this point of view is explained in section \ref{sec:5}.

Assuming the limit \eqref{eq:1b} exists, we get a certain amount of arithmetic information about the sequence of Euler characteristics $\chi (\Gamma_n, M)$. For simplicity, let us assume that $h_p \in p \Z_p$ and $p \neq 2$. Then $\exp_p (h_p)$ is defined, where $\exp_p$ is the $p$-adic exponential function. For a number $\chi \in \Q^{\times}_p$ let $\chi^{(1)}$ be the component in $1 + p \Z_p$ under the standard decomposition
\[
\Q^{\times}_p = \mu_{p-1} \times (1 + p \Z_p) \times p^{\Z} \; .
\]
Then the limit formula \eqref{eq:1b} is equivalent to the following assertion: There exists a sequence $(u_n) , u_n \in \Q^{\times}_p$ with $u_n \to 1$ such that for large enough $n$ we have
\[
\chi (\Gamma_n , M)^{(1)} = \exp_p (h_p)^{(\Gamma : \Gamma_n)} u^{(\Gamma : \Gamma_n)}_n \; .
\]
In particular
\[
\chi (\Gamma_n , M)^{(1)} \sim \exp_p (h_p)^{(\Gamma : \Gamma_n)} 
\]
in the sense that the quotient of both sides tends to $1$ in $\Q_p$. For the general case, use Proposition \ref{t3.6}.

The theory of algebraic dynamical systems $X$ gives some useful insights to the study of multiplicative Euler characteristics and their renormalized limits: One of the motivations for the present paper was the Li--Thom theorem $h = \tau^{\Gamma}_{(2)} (M)$ mentioned above. As another example, it is known that for $\Gamma = \Z^N$ only the principal prime ideals in a prime filtration for $M$ contribute to the entropy of $X$. The proof uses a positivity argument which does not transfer to the $p$-adic case. However, in the $p$-adic case the analogous assertion for the quantity $h_p$ defined by formula \eqref{eq:1b} is still true. This follows from a basic result on Serre's local intersection numbers.

Here is a short review of the individual sections. In \S\,\ref{sec:1}, \ref{sec:2} we prove that multiplicative Euler characteristics are well defined under suitable $p$-adic conditions. In \S\,\ref{sec:1} we do this for modules over augmented rings $R$ and in \S\,\ref{sec:2} we specialize to integral group rings $R = \Z \Gamma$. In \S\,\ref{sec:3} we review and extend the theory of $\log_p \det_{\Gamma}$ which we introduced in \cite{D}. It is a $p$-adic analogue of $\log \det_{\Nh\Gamma}$ where $\det_{\Nh\Gamma}$ is the Fuglede--Kadison determinant on the von~Neumann group algebra $\Nh\Gamma$. In \S\,\ref{sec:4} using $\log_p \det_{\Gamma}$ we introduce $p$-adic $R$-torsion and prove the limit formula $h_p = \tau^{\Gamma}_p (M)$. In \S\,\ref{sec:5} we review some aspects of the theory of algebraic dynamical systems and rephrase the preceding results in dynamical terms. In the final \S\,\ref{sec:6} we specialize to $\Gamma = \Z^N$ and relate $\chi (\Gamma_n , M)$ to Serre's intersection numbers. This allows us to calculate $h_p$ for $p$-adically expansive modules $M$ explicitely. We end with a remark how some of this connects to Arakelov theory on $\Pa^N_{\Z}$.

I am grateful to Arthur Bartels who gave me the oppertunity to speak at Wolfgang L\"uck's birthday conference in M\"unster. This invitation led to the present paper. I would also like to thank Laurent Fargues and Georg Tamme for helpful conversations.
\section{Multiplicative Euler characteristics} \label{sec:1}
Let $R$ be an (associative unital) ring. A (left) $R$-module $M$ is of type $FP$ (resp. $FL$) if there is an exact sequence
\begin{equation}
\label{eq:1}
0 \longrightarrow P_d \xrightarrow{\partial} \cdots \xrightarrow{\partial} P_0 \xrightarrow{\pi} M \longrightarrow 0
\end{equation}
of finitely generated projective (resp. free) $R$-modules $P_{\nu}$ for $0 \le \nu \le d$. We say that $R$ is augmented if we are given a ring homomorphism $\varepsilon : R \to \Z$. This makes $\Z$ a right $R$-module. The following fact follows from the definitions:

\begin{prop}
\label{t1.1}
For an augmented ring $R$ and an $R$-module $M$ of type $FP$ as in \eqref{eq:1} the groups
\begin{equation}
\label{eq:2}
\Tor^R_i (\Z , M) = H_i (\Z \otimes_{R} P_{\hullet})
\end{equation}
are finitely generated $\Z$-modules which vanish for $i > d$. 
\end{prop}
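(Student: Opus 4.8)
The plan is to compute $\Tor^R_i(\Z, M)$ directly from the finite projective resolution \eqref{eq:1} and read off the two claimed properties — finite generation over $\Z$ and vanishing above degree $d$ — from the shape of the resulting complex. First I would recall that, since \eqref{eq:1} is a resolution of $M$ by finitely generated projective $R$-modules, the groups $\Tor^R_i(\Z, M)$ may be computed as the homology of the complex obtained by applying $\Z \otimes_R (-)$ to the truncated resolution $P_{\hullet} = (P_d \to \cdots \to P_0)$, which is exactly the definition recorded in \eqref{eq:2}. This is the standard fact that $\Tor$ can be computed from any projective resolution of either argument; here we use the projective resolution of $M$.

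Next I would analyze the individual terms $\Z \otimes_R P_{\nu}$. Since each $P_{\nu}$ is a finitely generated projective $R$-module, it is a direct summand of some finitely generated free module $R^{m_{\nu}}$, and therefore $\Z \otimes_R P_{\nu}$ is a direct summand of $\Z \otimes_R R^{m_{\nu}} \cong \Z^{m_{\nu}}$. Hence each $\Z \otimes_R P_{\nu}$ is a finitely generated $\Z$-module (being a quotient, via the projection, of a finitely generated $\Z$-module, or equivalently a direct summand of $\Z^{m_{\nu}}$). The complex $\Z \otimes_R P_{\hullet}$ is therefore a complex of finitely generated $\Z$-modules concentrated in degrees $0$ through $d$, with no terms in degrees $i > d$.

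From this the two assertions follow immediately. The homology $\Tor^R_i(\Z, M) = H_i(\Z \otimes_R P_{\hullet})$ is a subquotient of the finitely generated $\Z$-module $\Z \otimes_R P_i$; since $\Z$ is Noetherian, any subquotient of a finitely generated $\Z$-module is again finitely generated, giving the first claim. For the vanishing, note that the complex has no terms in degrees $i > d$ (the term $\Z \otimes_R P_i$ is zero there), so $H_i(\Z \otimes_R P_{\hullet}) = 0$ for all $i > d$, giving the second claim.

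There is essentially no hard part here, which is consistent with the author's remark that the statement ``follows from the definitions.'' The only point requiring a word of care is the passage from \emph{projective} to \emph{free}: one must observe that a finitely generated projective module is a direct summand of a finitely generated free module in order to conclude finite generation of $\Z \otimes_R P_{\nu}$ over $\Z$, rather than assuming the $P_{\nu}$ are free outright. Once this is noted, the Noetherian property of $\Z$ and the finite length of the resolution do all the remaining work.
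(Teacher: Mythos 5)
Your argument is correct and is precisely the routine verification the paper has in mind when it says the proposition ``follows from the definitions'': compute $\Tor^R_i(\Z,M)$ from the given resolution, observe that each $\Z\otimes_R P_\nu$ is a direct summand of some $\Z^{m_\nu}$ and hence finitely generated over the Noetherian ring $\Z$, and note the complex vanishes in degrees above $d$. The paper offers no written proof to compare against, and yours supplies exactly the expected details, including the correct handling of the projective (rather than free) case.
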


For a $\Z$-module $A$ and a prime number $p$ let $\hA = \varprojlim_n A / p^n A$ be its $p$-adic completion. Write $A_{p^n} = \Ker (p^n : A \to A)$ and let $(A_{p^i})$ be the projective system with transition maps $p : A_{p^i} \to A_{p^{i-1}}$ for $i \ge 1$.

\begin{lemma}
\label{t1.2}
Let $R$ be a ring with $R_p = 0$. In the situation above, write $P^+_{\hullet}$ for the acyclic complex \eqref{eq:1} with $M$ in degree $-1$. Then we have:
\begin{compactenum}[a)]
\item $H_{\nu} (P^+_{\hullet} / p^i P^+_{\hullet}) = 0$ for $\nu \neq 1$ and $(H_1 (P^+_{\hullet} / p^i P^+_{\hullet})) = (M_{p^i})$ as projective systems
\item $H_{\nu} (\hP^+_{\hullet}) = 0$ for $\nu \neq 0,1$\\
$H_0 (\hP^+_{\hullet}) = \varprojlim_i^1 M_{p^i}$\\
$H_1 (\hP^+_{\hullet}) = \varprojlim_i M_{p^i}$.
\end{compactenum}
\end{lemma}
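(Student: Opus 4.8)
The plan is to use the acyclicity of $P^+_{\hullet}$ together with the hypothesis $R_p = 0$. Since each $P_\nu$ ($\nu \ge 0$) is a direct summand of a free $R$-module, $R_p = 0$ forces $(P_\nu)_p = 0$, i.e. multiplication by $p$ is injective in every nonnegative degree; in degree $-1$ the map $p^i$ on $M$ has kernel $M_{p^i}$. This asymmetry is what produces all the homology.

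For a), I would factor the chain map $p^i : P^+_{\hullet} \to P^+_{\hullet}$ through its image $p^i P^+_{\hullet}$ (a genuine subcomplex, since $\partial$ commutes with $p^i$), obtaining two short exact sequences of complexes
\[
0 \to K_{\hullet} \to P^+_{\hullet} \xrightarrow{p^i} p^i P^+_{\hullet} \to 0, \qquad 0 \to p^i P^+_{\hullet} \hookrightarrow P^+_{\hullet} \to P^+_{\hullet}/p^i P^+_{\hullet} \to 0,
\]
where $K_{\hullet} = \Ker (p^i)$ is $M_{p^i}$ concentrated in degree $-1$. Because $P^+_{\hullet}$ is acyclic, both long exact homology sequences collapse to isomorphisms $H_\nu (p^i P^+_{\hullet}) \cong H_{\nu - 1}(K_{\hullet})$ and $H_\nu (P^+_{\hullet}/p^i P^+_{\hullet}) \cong H_{\nu - 1}(p^i P^+_{\hullet})$. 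Composing these two degree shifts against $H_{-1}(K_{\hullet}) = M_{p^i}$ gives $H_\nu (P^+_{\hullet}/p^i P^+_{\hullet}) = M_{p^i}$ for $\nu = 1$ and $0$ otherwise.

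The delicate point of a) is the projective-system structure. I would make the isomorphism explicit: a class in $H_1 (P^+_{\hullet}/p^i P^+_{\hullet})$ is represented by $x \in P_1$ with $\partial x \in p^i P_0$; writing $\partial x = p^i y$ (with $y$ unique since $p$ is injective on $P_0$) one checks $p^i \pi (y) = \pi (\partial x) = 0$, so $\pi (y) \in M_{p^i}$, and $x \mapsto \pi (y)$ realizes the isomorphism. The transition map of the tower is reduction modulo $p^{i-1}$, which keeps the same representative $x$, but at level $i-1$ one rewrites $\partial x = p^{i-1}(p y)$, so the associated torsion element becomes $\pi (p y) = p \, \pi (y)$. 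Hence the transition maps are precisely multiplication by $p$, identifying $(H_1 (P^+_{\hullet}/p^i P^+_{\hullet}))$ with $(M_{p^i})$ as projective systems.

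For b), I would invoke the Milnor $\varprojlim$–$\varprojlim^1$ exact sequence for the homology of an inverse limit of complexes. The tower $(P^+_{\hullet}/p^i P^+_{\hullet})$ has surjective transition maps in each degree, hence is degreewise Mittag–Leffler, so $\varprojlim^1$ of the terms vanishes, $\varprojlim_i P^+_{\hullet}/p^i P^+_{\hullet} = \hP^+_{\hullet}$, and for every $\nu$ there is a short exact sequence
\[
0 \to \varprojlim_i{}^1 H_{\nu + 1}(P^+_{\hullet}/p^i P^+_{\hullet}) \to H_\nu (\hP^+_{\hullet}) \to \varprojlim_i H_\nu (P^+_{\hullet}/p^i P^+_{\hullet}) \to 0 .
\]
Feeding in part a), where only $H_1 = M_{p^i}$ is nonzero, yields $H_1 (\hP^+_{\hullet}) = \varprojlim_i M_{p^i}$ (from $\nu = 1$, the $\varprojlim^1$ term vanishing), $H_0 (\hP^+_{\hullet}) = \varprojlim_i^1 M_{p^i}$ (from $\nu = 0$, the $\varprojlim$ term vanishing), and $H_\nu (\hP^+_{\hullet}) = 0$ for $\nu \neq 0,1$. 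The homology groups themselves drop out effortlessly from acyclicity; I expect the main obstacle to be the careful bookkeeping in a) that turns the reduction maps of the tower into the multiplication-by-$p$ maps defining the system $(M_{p^i})$, the only other point requiring attention being the Mittag–Leffler hypothesis for the Milnor sequence, which here is immediate.
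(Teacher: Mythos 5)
Your proof is correct and follows essentially the same route as the paper: both split the four-term sequence arising from multiplication by $p^i$ on $P^+_{\hullet}$ into short exact sequences of complexes (your middle term $p^i P^+_{\hullet}$ is isomorphic via $p^i$ to the paper's $P^+_{\hullet}/M_{p^i}[1]$), use acyclicity to collapse the two long exact sequences into degree shifts, and then apply the Milnor $\varprojlim$--$\varprojlim^1$ sequence for the Mittag--Leffler tower $(P^+_{\hullet}/p^iP^+_{\hullet})$. Your explicit cycle-level identification of the transition maps with $p : M_{p^i} \to M_{p^{i-1}}$ is a sound substitute for the paper's appeal to the naturality of its commutative diagram.
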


\begin{proof}
For a projective $R$-module $P$ we have $P_p = 0$ since $P$ is a direct summand of a power of $R$ and we assumed that $R_p = 0$. Hence we have exact sequences of complexes for $i \ge 0$
\begin{equation}
\label{eq:3}
0 \longrightarrow M_{p^i} [1] \longrightarrow P^+_{\hullet} \xrightarrow{p^i} P^+_{\hullet} \longrightarrow P^+_{\hullet} / p^i P^+ \longrightarrow 0 \; .
\end{equation}
Here $M_{p^i} [1]$ is the complex with $M_{p^i}$ in degree $-1$ and which is zero elsewhere. Note the commutative diagram
\begin{equation}
\label{eq:4}
\vcenter{\xymatrix{
0 \ar[r] & M_{p^i} [1] \ar[r] \ar[d]^p & P^+_{\hullet} \ar[r]^{p^i} \ar[d]^p & P^+_{\hullet} \ar[r] \ar@{=}[d] & P^+_{\hullet} / p^i P^+_{\hullet} \ar[r] \ar@{->>}[d] & 0 \\
0 \ar[r] & M_{p^{i-1}} [1] \ar[r] & P^+_{\hullet} \ar[r]^-{p^{i-1}} & P^+_{\hullet} \ar[r] & P^+_{\hullet} / p^{i-1} P^+_{\hullet} \ar[r] & 0 \; .
}}
\end{equation}
From \eqref{eq:3} we get an exact sequence of complexes
\[
0 \longrightarrow P^+_{\hullet} / M_{p^i} [1] \xrightarrow{p^i} P^+_{\hullet} \longrightarrow P^+_{\hullet} / p^i P^+_{\hullet} \longrightarrow 0 
\]
and hence a long exact homology sequence
\begin{equation}
\label{eq:5}
\ldots \longrightarrow H_{-1} (P^+_{\hullet} / M_{p^i} [1]) \xrightarrow{p^i} H_{-1} (P^+_{\hullet}) \longrightarrow H_{-1} (P^+_{\hullet} / p^i P^+_{\hullet}) \longrightarrow 0 \; .
\end{equation}
Since $H_{\nu} (P^+_{\hullet}) = 0$ for all $\nu$, we get
\begin{align*}
H_{-1} (P^+_{\hullet} / p^i P^+_{\hullet}) & = 0 \\
H_0 (P^+_{\hullet} / p^i P^+_{\hullet}) & \silo H_{-1} (P^+_{\hullet} / M_{p^i} [1]) = 0 \; ,
\end{align*}
noting that the map $P_0 \to P_{-1} / M_{p^i} = M / M_{p^i}$ is surjective. Moreover we find
\begin{equation}
\label{eq:6}
H_1 (P^+_{\hullet} / p^i P^+_{\hullet}) \silo H_0 (P^+_{\hullet} / M_{p^i} [1]) \xrightarrow{\overset{\pi}{\sim}} M_{p^i} \; ,
\end{equation}
since the sequence
\[
P_1 \xrightarrow{\partial} P_0 \xrightarrow{\opi} P_{-1} / M_{p^i} = M / M_{p^i}
\]
has homology
\[
\Ker \opi / \Imm \partial = \pi^{-1} (M_{p^i}) / \Imm \partial = \pi^{-1} (M_{p^i}) / \Ker \pi \xrightarrow{\overset{\pi}{\sim}} M_{p^i} \; .
\]
Because of \eqref{eq:4}, in the isomorphism \eqref{eq:6} the projection from the $i$-th to the $(i-1)$-th term on the left corresponds to $p : M_{p^i} \to M_{p^{i-1}}$ on the right. Hence we have
\[
(H_1 (P^+_{\hullet} /p^i P^+_{\hullet})) \cong (M_{p^i}) 
\]
as projective systems. Finally, the long exact sequence \eqref{eq:5} implies that
\[
H_{\nu} (P^+_{\hullet} / p^i P^+_{\hullet}) \cong H_{\nu-1} (P^+_{\hullet}) = 0 \quad \text{for} \; \nu \ge 2 \; .
\]
Thus a) is proved. According to \cite[Theorem 3.5.8]{W}, for the projective system of chain complexes of abelian groups
\[
\ldots \longrightarrow P^+_{\hullet} / p^i P^+_{\hullet} \longrightarrow P^+_{\hullet} / p^{i-1} P^+_{\hullet} \longrightarrow \cdots \longrightarrow P^+_{\hullet} / p^0 P^+_{\hullet} = 0
\]
we have exact sequences for $\nu \in \Z$
\[
0 \longrightarrow \varprojlim_i\!^1 H_{\nu+1} (P^+_{\hullet} / p^i P^+_{\hullet}) \longrightarrow H_{\nu} (\hP^+_{\hullet}) \longrightarrow \varprojlim_i H_{\nu} (P^+_{\hullet} / p^i P^+_{\hullet}) \longrightarrow 0 \; .
\]
Using a), they imply the assertions in b).
\end{proof}

Let $R$ be a ring. For any $R$-module $A$ we have a natural map
\[
\varphi_A : \hR \otimes_R A \longrightarrow \hA \; , \; (e_i) \otimes a \longmapsto (e_i a) \; .
\]
Let $P$ be a finitely generated projective $R$-module. Then there is an $R$-module $Q$ such that $P \oplus Q = R^n$ as $R$-modules for some $n \ge 1$. We have $\varphi_P \oplus \varphi_Q = \varphi_{R^n}$ and since $\varphi_{R^n}$ is an isomorphism, $\varphi_P$ is an isomorphism as well.

\begin{cor}
\label{t1.3}
Let $R$ be a ring with $R_p = 0$ and let $M$ be an $R$-module of type $FP$ with resolution \eqref{eq:1}.\\
a) If $\varprojlim^1 M_{p^i} = 0$ the map
\[
\varphi_M : \hR \otimes_R M \silo \hM
\]
is an isomorphism.\\
b) If $\varprojlim^{\nu} M_{p^i} = 0$ for $\nu = 0,1$, the sequence
\begin{equation}
\label{eq:7}
0 \longrightarrow \hP_d \longrightarrow \ldots \longrightarrow \hP_0 \longrightarrow \hM \longrightarrow 0
\end{equation}
obtained from \eqref{eq:1} by $p$-adic completion and the isomorphic sequence
\[
0 \longrightarrow \hR \otimes_R P_d \longrightarrow \ldots \longrightarrow \hR \otimes_R P_0 \longrightarrow \hR \otimes_R M \longrightarrow 0
\]
are both exact. \\
c) If $M$ is as in b) and if in addition $R$ is augmented by $\varepsilon : R \to \Z$ so that $\Z_p$ becomes a right $\hR$ module via $\hvarepsilon : \hR \to \hZ = \Z_p$, we have
\begin{equation}
\label{eq:8}
\Tor^R_i (\Z , M) \otimes_{\Z} \Z_p = \Tor^{\hR}_i (\Z_p , \hM) = \Tor^{\hR}_i (\Z_p , \hR \otimes_R M) \; .
\end{equation}
\end{cor}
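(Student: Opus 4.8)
The plan is to reduce all three assertions to the single four-term exact sequence
\begin{equation*}
0 \longrightarrow \varprojlim_i{}^1 M_{p^i} \longrightarrow \hR \otimes_R M \xrightarrow{\varphi_M} \hM \longrightarrow 0 ,
\end{equation*}
valid for every $R$-module $M$ of type $FP$, from which a) is immediate and b), c) follow with little extra work. The only inputs are Lemma \ref{t1.2}, the right exactness of $\hR \otimes_R -$, and the fact established just before the corollary that $\varphi_P$ is an isomorphism for finitely generated projective $P$.

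First I would establish this sequence. Since each $\varphi_{P_\nu}$ is an isomorphism, the natural transformation $\varphi$ gives an isomorphism of complexes $\hR \otimes_R P_{\hullet} \silo \hP_{\hullet}$, where $P_{\hullet}$ denotes the projective part of \eqref{eq:1} in degrees $0,\ldots,d$; in particular $H_{\nu}(\hR \otimes_R P_{\hullet}) \cong H_{\nu}(\hP_{\hullet})$ for all $\nu$. Now $\hM[-1]$ is a subcomplex of $\hP^+_{\hullet}$ with quotient $\hP_{\hullet}$, so the long exact homology sequence together with $H_{-1}(\hP^+_{\hullet}) = 0$ and $H_0(\hP^+_{\hullet}) = \varprojlim_i^1 M_{p^i}$ from Lemma \ref{t1.2} b) yields a short exact sequence $0 \to \varprojlim_i^1 M_{p^i} \to H_0(\hP_{\hullet}) \xrightarrow{\delta} \hM \to 0$, where $\delta$ is induced by $\hat{\pi} : \hP_0 \to \hM$. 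On the other hand, applying the right exact functor $\hR \otimes_R -$ to $P_1 \to P_0 \to M \to 0$ identifies $H_0(\hR \otimes_R P_{\hullet}) = \coker(\hR \otimes_R P_1 \to \hR \otimes_R P_0)$ with $\hR \otimes_R M$. Transporting the previous sequence along $\hR \otimes_R P_{\hullet} \silo \hP_{\hullet}$ and checking, via the naturality square for $\pi : P_0 \to M$, that $\delta$ corresponds to $\varphi_M$, gives the claimed four-term sequence.

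Given this, part a) is exactly the statement that $\varphi_M$ is an isomorphism once its kernel $\varprojlim_i^1 M_{p^i}$ vanishes. For part b), when in addition $\varprojlim_i M_{p^i} = H_1(\hP^+_{\hullet}) = 0$, Lemma \ref{t1.2} b) shows that all homology of $\hP^+_{\hullet}$ vanishes, i.e.\ \eqref{eq:7} is exact; since the $\varphi_{P_\nu}$ and (by a)) $\varphi_M$ are isomorphisms, $\varphi$ is an isomorphism from the $\hR \otimes_R$-completed sequence onto \eqref{eq:7}, so that sequence is exact as well. As each $\hR \otimes_R P_\nu$ is a direct summand of some $\hR^n$, it is finitely generated projective over $\hR$, so b) exhibits $\hR \otimes_R P_{\hullet}$ as a finite projective $\hR$-resolution of $\hR \otimes_R M \cong \hM$. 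For part c) I would then compute the three $\Tor$ groups through $P_{\hullet}$: since $\Z_p$ is flat over $\Z$ and $\Tor^R_i(\Z, M) = H_i(\Z \otimes_R P_{\hullet})$, one gets $\Tor^R_i(\Z, M) \otimes_{\Z} \Z_p = H_i(\Z_p \otimes_R P_{\hullet})$ using $(\Z \otimes_R P_\nu) \otimes_{\Z} \Z_p = \Z_p \otimes_R P_\nu$; by the resolution of the previous sentence and $\Z_p \otimes_{\hR}(\hR \otimes_R P_\nu) = \Z_p \otimes_R P_\nu$ one gets $\Tor^{\hR}_i(\Z_p, \hR \otimes_R M) = H_i(\Z_p \otimes_R P_{\hullet})$ as well; finally the $\hR$-linear isomorphism $\varphi_M$ of part a) gives $\Tor^{\hR}_i(\Z_p, \hR \otimes_R M) = \Tor^{\hR}_i(\Z_p, \hM)$.

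The main obstacle is the careful bookkeeping behind the four-term sequence: one must correctly orient the short exact sequence $0 \to \hM[-1] \to \hP^+_{\hullet} \to \hP_{\hullet} \to 0$, identify the connecting map with $\hat{\pi}$ and hence, by naturality, with $\varphi_M$, and keep track that $\hR \otimes_R -$ is only right exact, so that the ``missing'' homology is exactly what Lemma \ref{t1.2} supplies as the $\varprojlim^1$ and $\varprojlim$ terms. Everything else is formal homological algebra.
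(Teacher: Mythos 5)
Your proof is correct and rests on the same ingredients as the paper's: Lemma \ref{t1.2}, the right exactness of $\hR \otimes_R -$, and the fact that $\varphi_P$ is an isomorphism for finitely generated projective $P$. The only difference is organizational — you first extract the general exact sequence $0 \to \varprojlim_i^1 M_{p^i} \to \hR \otimes_R M \to \hM \to 0$ from the long exact sequence of $0 \to \hM \to \hP^+_{\hullet} \to \hP_{\hullet} \to 0$ and then specialize, whereas the paper deduces a) directly from the exactness of $\hP_1 \to \hP_0 \to \hM \to 0$ by comparing it with the right-exact tensored row; parts b) and c) are argued identically.
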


\begin{proof}
a) Consider the commutative diagram
\[
\xymatrix{
\hR \otimes_R P_1 \ar[r] \ar[d]^{\wr \, \varphi_{P_1}} & \hR \otimes_R P_0 \ar[r] \ar[d]^{\wr \, \varphi_{P_0}} & \hR \otimes_R M \ar[r] \ar[d]^{\varphi_M} & 0 \\
\hP_1 \ar[r] & \hP_0 \ar[r] & \hM \ar[r] & 0
}
\]
The maps $\varphi_{P_0}$ and $\varphi_{P_1}$ are isomorphisms by the above remark. The upper line is exact since $\hR \otimes_R \_ $ is right exact. The lower line is exact by Lemma \ref{t1.2} since $\varprojlim^1_i M_{p^i} = 0$. Hence $\varphi_M$ is an isomorphism.\\
b) The exactness of \eqref{eq:7} follows with the help of Lemma \ref{t1.2}. Since $\hR \otimes_R P_i = \hP_i$ and $\hR \otimes_R M = \hM$ by a), the second assertion follows.\\
c) The ring $\Z_p$ is flat over $\Z$. Hence we find
\begin{align*}
\Z_p \otimes_{\Z} \Tor^R_i (\Z , M) & = \Z_p \otimes_{\Z} H_i (\Z \otimes_R P_{\hullet}) \\
& = H_i (\Z_p \otimes_{\hR} (\hR \otimes_R P_{\hullet})) \\
& = \Tor^{\hR}_i (\Z_p , \hM) \; .
\end{align*}
For the last step note that according to b), $\hR \otimes_R P_{\hullet}$ is a resolution of $\hR \otimes_R M = \hM$ by projective $\hR$-modules.
\end{proof}

We can now prove the main result of this section which gives a $p$-adic criterion for a certain multiplicative Euler-characteristic to be defined. 

\begin{theorem}
\label{t1.4}
a) Let $R$ be a ring with $R_p = 0$ and $M$ an $R$-module of type $FP$ such that $p^i M = p^{n_0} M$ for some $n_0 \ge 0$ and all $i \ge n_0$. Then we have $\varprojlim^1_i M_{p^i} = 0$ and
\[
\hR \otimes_R M = \hM = M / p^{n_0} M \; .
\]
Now assume that in addition $R$ is augmented and $\varprojlim M_{p^i} = 0$. Then we have:\\
b) For each $i \ge 0$ the abelian group $\Tor^R_i (\Z , M)$ is finite and its $p$-primary part is annihilated by $p^{n_0}$. For $i > d$ with $d$ as in a resolution \eqref{eq:1} the groups $\Tor^R_i (\Z , M)$ vanish. In particular the multiplicative Euler characteristics
\[
\chi_R (M) = \prod_i |\Tor^R_i (\Z , M) |^{(-1)^i} \in \Q
\]
is well defined.\\
c) For any resolution \eqref{eq:1} of $M$ the sequence
\[
0 \longrightarrow \hR_{\Q} \otimes_R P_d \longrightarrow \ldots \longrightarrow \hR_{\Q} \otimes_R P_0 \longrightarrow 0
\]
is exact, where $\hR_{\Q} = \hR \otimes_{\Z} \Q = \hR \otimes_{\Z_p} \Q_p$. If $n_0 = 0$ above, i.e. if $p : M \to M$ is surjective, then even the sequence
\[
0 \longrightarrow \hR \otimes_R P_d \longrightarrow \ldots \longrightarrow \hR \otimes_R P_0 \longrightarrow 0
\]
is exact.
\end{theorem}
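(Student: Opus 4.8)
The plan is to handle the three parts in turn, reducing each to the completion results of Lemma~\ref{t1.2} and Corollary~\ref{t1.3}; the only step requiring genuine work is a Mittag--Leffler argument for a). For a), I would first describe the transition maps of the projective system $(M_{p^i})$: the map from level $i$ to level $j$ (with $i \ge j$) is multiplication by $p^{i-j}$, and a direct check identifies its image inside $M_{p^j}$ with $p^{i-j} M \cap M_{p^j}$. By hypothesis $p^{i-j} M = p^{n_0} M$ whenever $i - j \ge n_0$, so for each fixed $j$ the descending chain of these images stabilizes at $p^{n_0} M \cap M_{p^j}$. Hence $(M_{p^i})$ satisfies the Mittag--Leffler condition and $\varprojlim^1_i M_{p^i} = 0$, so Corollary~\ref{t1.3}~a) gives the isomorphism $\varphi_M : \hR \otimes_R M \silo \hM$. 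Since $p^n M = p^{n_0} M$ for all $n \ge n_0$, the tower $(M/p^n M)$ computing $\hM$ is eventually constant with identity transition maps, whence $\hM = M/p^{n_0} M$.

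For b), Proposition~\ref{t1.1} already shows that each $\Tor^R_i(\Z, M)$ is a finitely generated $\Z$-module and that these vanish for $i > d$. Both $\varprojlim_i M_{p^i}$ and $\varprojlim^1_i M_{p^i}$ vanish --- the former by assumption, the latter by a) --- so Corollary~\ref{t1.3}~c) applies and gives $\Tor^R_i(\Z, M) \otimes_\Z \Z_p \cong \Tor^{\hR}_i(\Z_p, \hM)$. As $\hM = M/p^{n_0} M$ is annihilated by $p^{n_0}$, so is this Tor group; hence $\Tor^R_i(\Z, M) \otimes_\Z \Z_p$ is killed by $p^{n_0}$. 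For a finitely generated abelian group $T$ one has $T \otimes_\Z \Z_p \cong \Z_p^{\rk T} \oplus T_{(p)}$ with $T_{(p)}$ the $p$-primary part of $T$; bounded $p^{n_0}$-torsion forces $\rk T = 0$ and bounds $T_{(p)}$ by $p^{n_0}$. Thus each $\Tor^R_i(\Z, M)$ is finite with $p$-primary part annihilated by $p^{n_0}$, and since only finitely many are nonzero the product $\chi_R(M)$ is a well-defined element of $\Q$.

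For c), I would invoke Corollary~\ref{t1.3}~b): the completed complex $\hR \otimes_R P_{\hullet}$ is a resolution of $\hR \otimes_R M = \hM$, so its homology is concentrated in degree $0$, where it equals $\hM = M/p^{n_0} M$. Tensoring over $\Z$ with the flat algebra $\Q$ commutes with homology, and $\hM \otimes_\Z \Q = 0$ because $\hM$ is $p^{n_0}$-torsion; therefore $\hR_{\Q} \otimes_R P_{\hullet}$ is acyclic, which is the asserted exactness. When $n_0 = 0$ one has $\hM = M/M = 0$ already over $\Z$, so $\hR \otimes_R P_{\hullet}$ is a resolution of $0$ and hence exact integrally.

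The step I expect to be the main obstacle is the Mittag--Leffler verification in a), namely correctly identifying the image of the level-$i$-to-level-$j$ transition map as $p^{i-j} M \cap M_{p^j}$ and seeing that the hypothesis $p^i M = p^{n_0} M$ makes these images stabilize. Once $\varprojlim^1_i M_{p^i} = 0$ is established, parts b) and c) follow formally from Corollary~\ref{t1.3} together with the structure theory of finitely generated abelian groups and flatness of $\Q$ over $\Z$.
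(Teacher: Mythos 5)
Your proof is correct and follows essentially the same route as the paper: verify the Mittag--Leffler condition for $(M_{p^i})$ to obtain $\varprojlim^1_i M_{p^i}=0$, then feed everything into Corollary~\ref{t1.3} and the structure theory of finitely generated abelian groups. The only cosmetic difference is in a), where you identify the image of the transition map directly as $p^{i-j}M \cap M_{p^j}$, whereas the paper deduces its stabilization from the isomorphism $M_{p^j}/p^{i-j}M_{p^i} \cong M/p^{i-j}M$; both verifications are valid.
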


\begin{rem}
For an $R$-module $M$ of type $FP$ with $p : M \to M$ an isomorphism, all conditions on $M$ in Theorem \ref{t1.4} hold and the finite groups $\Tor^R_i (\Z , M)$ have vanishing $p$-primary part. For example, let $M = R^r /R^r \partial $ where $\partial \in M_r (R)$ defines an injective map $\partial : R^r \to R^r$ by right multiplication. If $R_p = 0$ then $p : M \to M$ is an isomorphism precisely if $\opartial \in \GL_r (\oR)$ where $\oR = R \otimes_{\Z} \F_p$ and $\opartial = \partial \otimes \id$. This follows from the snake lemma which gives an exact sequence:
\[
0 \longrightarrow M_p \longrightarrow \oR^r \xrightarrow{\opartial} \oR^r \longrightarrow M/ p \longrightarrow 0 \; .
\]
\end{rem}

\begin{proof}
a) The condition $p^i M = p^{n_0} M$ for $i \ge n_0$ implies that $\hM = M / p^{n_0} M$. The commutative diagram for $i \ge j \ge n_0$
\[
\xymatrix{
0 \ar[r] & M_{p^i} \ar[r] \ar[d]^{p^{i-j}} & M \ar[r]^{p^i} \ar[d]^{p^{i-j}} & p^i M \ar@{=}[r] & p^{n_0} M \ar[r] \ar@{=}[d] & 0 \\
0 \ar[r] & M_{p^j} \ar[r] & M \ar[r]^{p^j} & p^j M \ar@{=}[r] & p^{n_0} M \ar[r] & 0
}
\]
gives isomorphisms
\[
M_{p^j} / p^{i-j} M_{p^i} \silo M / p^{i-j} M \; .
\]
For $i \ge j + n_0 , j \ge n_0$ it follows that
\[
M_{p^j} / p^{i-j} M_{p^i} \silo M / p^{n_0} M \; .
\]
Hence for $j \ge n_0$ the image of $p^{i-j} M_{p^i}$ in $M_{p^j}$ is independent of $i$ if $i \ge j + n_0$. Hence $(M_{p^i})$ satisfies the Mittag--Leffler condition and by \cite[Prop. 3.5.7]{W} we therefore have
\[
\varprojlim_i\!^1 M_{p^i} = 0 \; .
\]
By Corollary \ref{t1.3} we therefore have
\[
\hR \otimes_R M = \hM = M / p^{n_0} M \; .
\]
b) According to a) and the assumptions on $M$ we have $\varprojlim_i^{\nu} M_{p^i} = 0$ for $\nu = 0,1$. The groups $\Tor^R_i (\Z , M)$ are finitely generated abelian groups by Proposition \ref{t1.1} which vanish for $i > d$. Part c) of Corollary \ref{t1.3} implies that
\[
\Tor^R_i (\Z , M) \otimes_{\Z} \Z_p = \Tor^{\hR}_i (\Z_p , \hM) = \Tor^{\hR}_i (\Z_p , M / p^{n_0} M) \; .
\]
Hence these finitely generated $\Z_p$-modules are annihilated by $p^{n_0}$ and therefore finite. Thus the rank of $\Tor^R_i (\Z , M)$ is zero and $\Tor^R_i (\Z , M)$ is a finite abelian group. Its $p$-primary part is $\Tor^R_i (\Z , M) \otimes_{\Z} \Z_p$ which is annihilated by $p^{n_0}$ as we just saw.\\
Part c) of the theorem follows from a) and Corollary \ref{t1.3} b). 
\end{proof}

We add some facts about the $p$-conditions on $M$ which we encounted.

\begin{prop}
\label{t1.5}
Let $A$ be an abelian group. The following conditions are equivalent:\\
1) $\varprojlim_i A_{p^i} = 0$\\
2) $\Hom ((\Q / \Z) (p), A) = 0$ where $(\Q / \Z) (p) = \Q_p / \Z_p$ is the $p$-primary part of $\Q / \Z$.\\
If $A$ has bounded $p$-torsion, i.e. if there is some $m_0 \ge 1$ such that $A_{p^i} = A_{p^{m_0}}$ for $i \ge m_0$, then 1), 2) hold.
\end{prop}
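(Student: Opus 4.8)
The plan is to identify the Hom-group in condition 2) directly with the inverse limit appearing in condition 1), after which the equivalence becomes formal, and then to dispose of the bounded-torsion case by a short telescoping argument.

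First I would write the Prüfer group as a direct limit: $(\Q/\Z)(p) = \Q_p/\Z_p = \varinjlim_i \tfrac{1}{p^i}\Z/\Z$, where under the isomorphisms $\tfrac{1}{p^i}\Z/\Z \cong \Z/p^i\Z$ the connecting inclusions $\tfrac{1}{p^i}\Z/\Z \hookrightarrow \tfrac{1}{p^{i+1}}\Z/\Z$ become the multiplication-by-$p$ maps $\Z/p^i\Z \xrightarrow{p} \Z/p^{i+1}\Z$. Since $\Hom(-,A)$ carries direct limits to inverse limits, this gives
\[
\Hom((\Q/\Z)(p), A) = \varprojlim_i \Hom(\Z/p^i\Z, A).
\]

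Next I would apply the standard identification $\Hom(\Z/p^i\Z, A) \silo A_{p^i}$, $f \mapsto f(1)$. The one point requiring care is that the transition maps match: the connecting map $\Z/p^i\Z \xrightarrow{p} \Z/p^{i+1}\Z$ induces on Hom-groups the precomposition map $\Hom(\Z/p^{i+1}\Z, A) \to \Hom(\Z/p^i\Z, A)$, which sends a homomorphism $g$ to $g$ composed with multiplication by $p$; on the level of $A_{p^{i+1}} \to A_{p^i}$ this is $g(1) \mapsto g(p) = p\,g(1)$, exactly multiplication by $p$. Hence the inverse system $(\Hom(\Z/p^i\Z, A))$ is isomorphic to the system $(A_{p^i})$ with its multiplication-by-$p$ transition maps fixed in the statement, so $\Hom((\Q/\Z)(p), A) = \varprojlim_i A_{p^i}$ and the equivalence of 1) and 2) follows at once.

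Finally, for the bounded-torsion claim, suppose $A_{p^i} = A_{p^{m_0}}$ for all $i \ge m_0$, and let $(a_i) \in \varprojlim_i A_{p^i}$, so $p\,a_{i+1} = a_i$ for every $i$. Iterating this relation $m_0$ times yields $a_i = p^{m_0} a_{i+m_0}$; since $i+m_0 \ge m_0$ we have $a_{i+m_0} \in A_{p^{i+m_0}} = A_{p^{m_0}}$, which is annihilated by $p^{m_0}$, whence $a_i = 0$. As $i$ is arbitrary the inverse limit vanishes, so 1) holds --- and therefore 2) as well. The argument is essentially mechanical, and I expect the only genuine pitfall to be the bookkeeping in the second step: confirming that the inclusions of the direct system dualize to multiplication by $p$ (rather than some other power or the wrong direction), so that the resulting inverse system really does coincide with the one singled out in the statement.
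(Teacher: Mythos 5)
Your proof is correct and follows essentially the same route as the paper: both identify $\Hom((\Q/\Z)(p),A)$ with $\varprojlim_i A_{p^i}$ by writing the Pr\"ufer group as $\varinjlim_i p^{-i}\Z/\Z$ and using that $\Hom(-,A)$ turns direct limits into inverse limits, and both dispose of the bounded-torsion case via $a_i = p^{m_0}a_{i+m_0} = 0$. Your explicit check that the dualized transition maps are multiplication by $p$ is a detail the paper leaves implicit, but the argument is the same.
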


\begin{proof}
Since
\[
A_{p^i} = \Hom (\Z / p^i , A) = \Hom (p^{-i} \Z / \Z , A)
\]
we find that
\[
\varprojlim A_{p^i} = \Hom (\varinjlim_i p^{-i} \Z / \Z , A) = \Hom ((\Q / \Z) (p) , A) \; .
\]
Hence assertions 1) and 2) are equivalent. If $A_{p^i} = A_{p^{m_0}}$ for $i \ge m_0$, then for any element $(a_i) \in \varprojlim_i A_{p^i}$ we have $a_i = p^{m_0} a_{i+m_0} = 0$ which implies 1).
\end{proof}

\begin{prop}
\label{t1.6}
If $R$ is a (left) Noetherian ring and $M$ a finitely generated (left) $R$-module, then $\varprojlim_i M_{p^i} = 0$.
\end{prop}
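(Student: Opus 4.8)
The plan is to reduce the claim to the boundedness of the $p$-torsion of $M$ and then invoke Proposition \ref{t1.5}. The crucial structural observation is that for each $i$ the subgroup $M_{p^i} = \Ker (p^i : M \to M)$ is in fact an $R$-\emph{submodule} of $M$, not merely a subgroup. Indeed, multiplication by the integer $p$ factors through the unit map $\Z \to R$, so $p$ is central and multiplication by $p^i$ is $R$-linear; its kernel is therefore stable under the $R$-action.

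First I would record that these submodules form an ascending chain
\[
M_p \subseteq M_{p^2} \subseteq M_{p^3} \subseteq \cdots
\]
of $R$-submodules of $M$. Since $R$ is left Noetherian and $M$ is finitely generated, $M$ is a Noetherian $R$-module, so this chain must stabilize: there exists some $m_0 \geq 1$ with $M_{p^i} = M_{p^{m_0}}$ for all $i \geq m_0$. In the terminology of Proposition \ref{t1.5}, this says precisely that $M$ has bounded $p$-torsion.

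Finally, by the last assertion of Proposition \ref{t1.5}, bounded $p$-torsion already implies condition 1) of that proposition, namely $\varprojlim_i M_{p^i} = 0$, which is exactly what is to be shown.

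I do not anticipate any genuine obstacle here. The only step warranting a moment of care is the verification that each $M_{p^i}$ is an $R$-submodule, so that the ascending chain condition from Noetherianity is applicable to it; but this is immediate from the centrality of $p$. Everything beyond that is a direct appeal to the Noetherian stabilization of the chain together with the already-established Proposition \ref{t1.5}, so the argument should be very short.
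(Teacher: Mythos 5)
Your proof is correct and follows exactly the paper's argument: the $M_{p^i}$ form an ascending chain of $R$-submodules, which stabilizes by Noetherianity, and Proposition \ref{t1.5} then gives the vanishing of the inverse limit. Your explicit check that each $M_{p^i}$ is an $R$-submodule (via centrality of $p$) is a detail the paper leaves implicit, but the route is the same.
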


\begin{proof}
The $R$-module $M$ is Noetherian and hence the ascending chain of $R$-submodules $M_p \subset M_{p^2} \subset M_{p^3} \subset \ldots$ is stationary. Now the claim follows from Proposition \ref{t1.5}.
\end{proof}

\begin{rem}
We will be interested in integral group rings $R = \Z \Gamma$. By a result of Hall they are left- and right-Noetherian for polycyclic-by-finite groups $\Gamma$ but not in general. 
\end{rem}

\begin{prop}
\label{t1.7}
Let $R$ be a ring with $R_p = 0$ and let $M$ be an $R$-module of type $FP$. There are equivalences:\\
a) $p^i M = p^{n_0} M$ for all $i \ge n_0$ if and only if $\hR \otimes_R M$ is annihilated by $p^{n_0}$. \\
b) $(p^i M)$ is stationary for large enough $i$ if and only if $\hR_{\Q} \otimes_R M = 0$.
\end{prop}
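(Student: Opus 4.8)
The plan is to deduce everything from Theorem~\ref{t1.4} together with two elementary facts about the finitely generated $\hR$-module $N := \hR \otimes_R M$: that it surjects onto each $M/p^n M$, and that it is finitely generated over $\hR$.

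The two ``only if'' directions are the short ones. For a), if $p^i M = p^{n_0} M$ for all $i \ge n_0$, then Theorem~\ref{t1.4}~a) gives $\hR \otimes_R M = M/p^{n_0} M$, which is visibly annihilated by $p^{n_0}$. For b), if $(p^i M)$ is stationary, then a) shows $N$ is killed by some $p^{n_0}$; since $\hR_{\Q} \otimes_R M \cong N \otimes_{\Z_p} \Q_p$ (localizing at the central element $p$ commutes with $-\otimes_R M$), inverting $p$ kills this bounded-torsion module and $\hR_{\Q} \otimes_R M = 0$.

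For the converse of a) I would use the canonical surjection. For each $n$ the reduction $\hR \to R/p^n R$ is a surjection of right $R$-modules, so right-exactness of $-\otimes_R M$ yields a surjection $N \twoheadrightarrow (R/p^n R)\otimes_R M = M/p^n M$ (here $p^n R$ is two-sided because $p$ is central). If $N$ is annihilated by $p^{n_0}$, then so is $M/p^n M$ for every $n$, i.e. $p^{n_0} M \subseteq p^n M$; combined with the trivial reverse inclusion for $n \ge n_0$ this gives $p^n M = p^{n_0} M$ for all $n \ge n_0$.

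The one place that needs genuine care is the converse of b), and I expect it to be the main obstacle. Assuming $\hR_{\Q} \otimes_R M = 0$, i.e. $N \otimes_{\Z_p} \Q_p = 0$, one sees that every element of $N$ vanishes in $N[1/p]$ and is therefore killed by some power of $p$; thus $N$ is a $p$-power torsion $\Z_p$-module. What I must upgrade this to is \emph{uniform} torsion $p^{n_0} N = 0$, which is exactly the input needed to re-enter part a). This is where the type-$FP$ assumption is indispensable: applying $\hR \otimes_R -$ to \eqref{eq:1} exhibits $N$ as a quotient of $\hR \otimes_R P_0$, which is finitely generated over $\hR$ because $P_0$ is finitely generated over $R$. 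Choosing $n_0$ that kills a finite generating set and using centrality of $p$ in $\hR$ gives $p^{n_0} N = 0$, and the already-proved converse of a) then yields $p^i M = p^{n_0} M$ for $i \ge n_0$. Without finite generation this step would fail, since a torsion $\Z_p$-module need not be bounded, so the care lies in never losing track of finite generation over $\hR$.
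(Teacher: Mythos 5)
Your proof is correct and follows the same overall strategy as the paper: the forward direction of a) is quoted from Theorem \ref{t1.4} a), and b) is deduced from a) using that $\hR \otimes_R M$ is finitely generated over $\hR$, so elementwise $p$-power torsion upgrades to a uniform bound $p^{n_0}$. The one point of divergence is the converse of a): the paper first shows $\hR / p^i \hR \cong R/p^i R$ (using $R_p = 0$ to get injectivity of the comparison map) and hence $M/p^i M \cong (\hR \otimes_R M)/p^i(\hR \otimes_R M)$, concluding that all these quotients equal $\hR \otimes_R M$ for $i \ge n_0$; you instead use only the surjection $\hR \otimes_R M \twoheadrightarrow M/p^n M$ and the trivial inclusion $p^n M \subseteq p^{n_0} M$. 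Your route is slightly more economical and does not invoke $R_p = 0$ at that step, at the cost of not recording the identification $M/p^i M = \hR \otimes_R M$ that the paper's argument yields along the way.
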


\begin{rem}
For $R = \Z [t^{\pm 1}_1 , \ldots , t^{\pm 1}_N]$ assertion b) was first observed by Br\"auer \cite{B}.
\end{rem}

\begin{proof}
a) The implication ``$\Rightarrow$'' follows from part a) of Theorem \ref{t1.4}. The natural map $\hR \to R / p^i R$ is surjective. The induced surjection $\hR / p^i \hR \to R / p^i R$ is also injective because $p$-multiplication is injective on $R$: If $(\ox_{\nu}) \in \hR$ satisfies $\ox_i = 0$ then $x_{\nu} = p^i y_{\nu}$ for all $\nu > i$ with unique $y_{\nu}$'s in $R$. Moreover $x_{\nu} \equiv x_{\nu-1} \mod p^{\nu-1} R$ implies that $y_{\nu} \equiv y_{\nu-1} \mod p^{\nu-i-1} R$ for $\nu > i$. Setting $z_{\nu} = y_{\nu+i}$ we obtain an element $(\oz_{\nu}) \in \hR$ with $p^i (\oz_{\nu}) = (\ox_{\nu})$. Using the isomorphism $\hR / p^i \hR = R / p^i R$ we can now prove the converse implication in a). We have:
\begin{align*}
M / p^i M & = (R / p^i R) \otimes_R M = (\hR / p^i \hR) \otimes_R M \\
& = (\hR \otimes_R M) / p^i (\hR \otimes_R M) \; .
\end{align*}
If $p^{n_0}$ annihilates $\hR \otimes_R M$ it therefore follows that
\[
M / p^i M = \hR \otimes_R M \quad \text{for} \; i \ge n_0 \; .
\]
Hence the natural map
\[
M / p^i M \longrightarrow M / p^{n_0} M
\]
is an isomorphism for $i \ge n_0$ and therefore $p^i M = p^{n_0} M$.\\
b) follows from a) since modules of type $FP$ are finitely generated.
\end{proof}

\begin{prop}
\label{t1.8}
a) Let $R$ be a ring and $0 \to M' \to M \to M'' \to 0$ a short exact sequence. If $M'$ and $M''$ are of type $FP$ (resp. $FL$) with resolutions $F'_{\hullet} \to M'$ and $F''_{\hullet} \to M''$, then there is a commutative diagram with exact lines and columns where $F_i = F'_i \oplus F''_i$
\[
\xymatrix{
0 \ar[r] & F'_{\hullet} \ar[r]^i \ar[d] & F_{\hullet} \ar[r]^{\pi} \ar[d] & F''_{\hullet} \ar[r] \ar[d] & 0 \\
0 \ar[r] & M' \ar[r] \ar[d] & M \ar[r] \ar[d] & M'' \ar[r] \ar[d] & 0 \\
 & 0 & 0 & 0 & 
}
\]
Here $i$ and $\pi$ are the natural inclusion and projection. In particular $M$ is of type $FP$ (resp. $FL$) as well.\\
b) Now assume that in addition $R_p = 0$ and that $p^n M' = p^{n'_0} M'$ for $n \ge n'_0$ and $p^n M'' = p^{n''_0} M''$ for $n \ge n''_0$. Then we have that $p^n M = p^{n'_0 + n''_0} M$ for $n \ge n'_0 + n''_0$.\\
c) For any exact sequence $0 \to M' \to M \to M'' \to 0$ of $R$-modules the conditions $\varprojlim_i M'_{p^i} = 0$ and $\varprojlim_i M''_{p^i} = 0$ imply that $\varprojlim_i M_{p^i} = 0$.
\end{prop}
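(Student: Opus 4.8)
The three parts are largely independent, and I would treat them in turn. For a) the natural tool is the horseshoe lemma: set $F_i = F'_i \oplus F''_i$ and build the resolution degree by degree. The augmentation $F_0 \to M$ is $F'_0 \twoheadrightarrow M' \hookrightarrow M$ on the first summand and, on the second, a lift of $F''_0 \twoheadrightarrow M''$ along $M \twoheadrightarrow M''$, which exists by projectivity of $F''_0$; a snake-lemma chase shows it is surjective with the expected kernel. The differential $F_i \to F_{i-1}$ is then taken of upper triangular shape $\left(\begin{smallmatrix}\partial'_i & s_i\\ 0 & \partial''_i\end{smallmatrix}\right)$, with correction maps $s_i$ produced inductively (again using projectivity of the $F''_i$) so that $F_{\hullet}$ is a complex and $0 \to F'_{\hullet} \to F_{\hullet} \to F''_{\hullet} \to 0$ is a short exact sequence of complexes whose augmentations are compatible with $0 \to M' \to M \to M'' \to 0$. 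The associated long exact homology sequence, together with the acyclicity of $F'_{\hullet}$ and $F''_{\hullet}$ in positive degrees, forces $F_{\hullet}$ to be acyclic in positive degrees with $H_0(F_{\hullet}) \cong M$, so $F_{\hullet} \to M$ is a resolution. Since each $F_i$ is finitely generated projective (resp.\ free) and the length is $\max(d',d'')$, the module $M$ is of type $FP$ (resp.\ $FL$), and $i$, $\pi$ are the evident inclusion and projection.

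For b) I would show directly that $p^N M \subseteq p^{N+1} M$ for $N = n'_0 + n''_0$; the opposite inclusion is automatic, and induction then yields $p^n M = p^N M$ for all $n \ge N$. Let $q : M \to M''$ be the projection and fix $m \in M$. Because $N \ge n''_0$, the hypothesis on $M''$ gives $p^{n''_0} M'' = p^{N+1} M''$, so $p^{n''_0} q(m) = p^{N+1} q(u)$ for some $u \in M$; hence $v := p^{n''_0} m - p^{N+1} u$ lies in $\Ker q = M'$. Multiplying by $p^{n'_0}$ gives $p^N m = p^{N+1}\bigl(p^{n'_0} u\bigr) + p^{n'_0} v$. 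Now $p^{n'_0} v \in p^{n'_0} M'$, and since $n'_0 \le N+1$ the hypothesis on $M'$ gives $p^{n'_0} M' = p^{N+1} M' \subseteq p^{N+1} M$, while the first summand is manifestly in $p^{N+1} M$; thus $p^N m \in p^{N+1} M$. I expect the exponent bookkeeping here to be the only real obstacle: one spends $n''_0$ powers of $p$ to move $q(m)$ into the stable range of $M''$ and a further $n'_0$ powers to land $v$ in the stable range of $M'$, which is precisely what produces the bound $n'_0 + n''_0$.

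For c) I would exploit the left exactness of the functor $A \mapsto A_{p^i} = \Hom(\Z/p^i, A)$: applied to $0 \to M' \to M \to M'' \to 0$ it gives, for each $i$, an exact sequence $0 \to M'_{p^i} \to M_{p^i} \to M''_{p^i}$, compatible with the transition maps (multiplication by $p$). Applying the left exact functor $\varprojlim_i$ then yields exactness of $0 \to \varprojlim_i M'_{p^i} \to \varprojlim_i M_{p^i} \to \varprojlim_i M''_{p^i}$, and since the two outer terms vanish by hypothesis, we conclude $\varprojlim_i M_{p^i} = 0$.
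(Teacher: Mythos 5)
Your proposal is correct and follows essentially the same route as the paper: part a) is the Horseshoe Lemma (which the paper simply cites from Weibel and you prove by hand), part b) uses the same two-step mechanism of spending $n''_0$ powers of $p$ to reach the stable range of $M''$ and then $n'_0$ more to land in the stable range of $M'$ (the paper shows $p^{n_0}M \subseteq p^nM$ directly for all $n \ge n_0$ rather than via your one-step-plus-induction variant, but the computation is the same), and part c) is the identical left-exactness argument.
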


\begin{proof}
a) is the Horseshoe Lemma 2.2.8 of \cite{W}.\\
b) Note first that $M$ is of type $FP$ by a). The remaining assertion follows easily using Proposition \ref{t1.7} a) or directly as follows: For $n \ge n_0 = n'_0 + n''_0$ consider $x \in p^{n_0} M , x = p^{n_0} x_1$ with $x_1 \in M$. The image of $p^{n''_0} x_1$ in $M''$ is of the form $p^{n-n'_0} x''$ for some $x'' \in M$ since $n - n'_0 \ge n''_0$. Let $y$ be a preimage of $x''$ in $M$. Then $p^{n''_0} x_1 - p^{n-n'_0} y$ lies in $M'$ and hence $x - p^n y \in p^{n'_0} M' = p^n M'$. 
Thus $x \in p^n M$ as claimed. \\
Assertion c) follows since both $\varprojlim_i$ and $M \mapsto M_{p^i}$ are left exact functors.
\end{proof}
\section{Homology of discrete groups for $p$-adically expansive modules} \label{sec:2}
In this section we consider the case where $R = \Z \Gamma$ is the integral group ring of a discrete group $\Gamma$. We are particularly interested in the case where $\Gamma$ is residually finite. The group ring $\Z \Gamma$ is augmented by $\varepsilon : \Z \Gamma \to \Z$ defined by $\varepsilon (\gamma) = 1$ for all $\gamma \in \Gamma$. By definition
\[
H_i (\Gamma , M) = \Tor^{\Z \Gamma}_i (\Z , M) \; ,
\]
for a $\Gamma$- or equivalently $\Z \Gamma$-module $M$. There is a natural isomorphism:
\begin{equation}
\label{eq:9}
\widehat{\Z\Gamma} = \Big\{ \sum_{\gamma \in \Gamma} x_{\gamma} \gamma \mid x_{\gamma} \in \Z_p \; \text{with} \; |x_{\gamma}| \to 0 \; \text{for} \; \gamma \to \infty \Big\} \; .
\end{equation}
Here $|x_{\gamma}|$ is the $p$-adic absolute value and $\gamma \to \infty$ means convergence in the cofinite topology of $\Gamma$: For all $\varepsilon > 0$ there is a finite subset $S \subset \Gamma$ such that $|x_{\gamma}| < \varepsilon$ for all $\gamma \in \Gamma \setminus S$. The augmentation map $\hvarepsilon : \widehat{\Z\Gamma} \to \Z_p$ induced by $\varepsilon : \Z \Gamma \to \Z$ sends $\sum_{\gamma \in \Gamma} x_{\gamma} \gamma$ to the convergent series $\sum_{\gamma \in \Gamma} x_{\gamma}$. We set 
\[
c_0 (\Gamma) := \widehat{\Z\Gamma} \otimes_{\Z} \Q = \Big\{ \sum_{\gamma \in \Gamma} x_{\gamma} \gamma \mid x_{\gamma} \in \Q_p  \; \text{with} \; |x_{\gamma}| \to 0 \; \text{for} \; \gamma \to \infty \Big\} \; . 
\]
We call a $\Z \Gamma$-module $M$ of type $FL$ \textit{$p$-adically expansive} if $c_0 (\Gamma) \otimes_{\Z\Gamma} M = 0$ or equivalently if the flag $(p^i M)$ is stationary, see Proposition \ref{t1.7} b). More precisely, we call such a module $M$ \textit{$p$-adically expansive of exponent $n_0$} if $p^i M = p^{n_0} M$ holds for all $i \ge n_0$. By Proposition \ref{t1.8}, given an exact sequence of $\Z\Gamma$-modules
\[
0 \longrightarrow M' \longrightarrow M \longrightarrow M'' \longrightarrow 0
\]
with $M' , M''$ $p$-adically expansive of exponents $n'_0$ and $n''_0$, the module $M$ is $p$-adically expansive of exponent $n'_0 + n''_0$. The module $M$ is $p$-adically expansive of exponent zero i.e. $pM = M$ if and only if $\widehat{\Z\Gamma} \otimes_{\Z\Gamma} M = 0$, cf. Proposition \ref{t1.7} a). The reason for the name ``$p$-adically expansive'' will be explained in \S\,\ref{sec:5}. In the present context, Theorem \ref{t1.4} gives:

\begin{theorem}
\label{t2.1}
Let $\Gamma$ be a discrete group and let $\Delta$ be a normal subgroup of finite index. Let $M$ be a $p$-adically expansive $\Z\Gamma$-module of exponent $n_0$ and $FP$-resolution \eqref{eq:1} of length $d$. Assume also that $\varprojlim_i M_{p^i} = 0$. Then the homology groups $H_i (\Delta , M)$ are finite abelian groups whose $p$-primary part is annihilated by $p^{n_0}$ and which vanish for $i > d$. Moreover the sequence
\[
0 \longrightarrow c_0 (\Delta) \otimes_{\Z\Delta} P_d \longrightarrow \ldots \longrightarrow c_0 (\Delta) \otimes_{\Z\Delta} P_0 \longrightarrow 0
\]
is exact. If $n_0 = 0$ i.e. if $p : M \to M$ is surjective, then even the squence
\[
0 \longrightarrow \widehat{\Z\Delta} \otimes_{\Z\Delta} P_d \longrightarrow \ldots \longrightarrow \widehat{\Z\Delta} \otimes_{\Z\Delta} P_0 \longrightarrow 0
\]
is exact.
\end{theorem}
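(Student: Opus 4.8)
The plan is to obtain every assertion by specializing Theorem \ref{t1.4} to the ring $R = \Z\Delta$, viewing $M$ and its resolution by restriction of scalars along the inclusion $\Z\Delta \hookrightarrow \Z\Gamma$. The one structural ingredient is that, since $\Delta$ has finite index $m = (\Gamma : \Delta)$ in $\Gamma$, the ring $\Z\Gamma$ is free of rank $m$ as a left $\Z\Delta$-module: choosing coset representatives $\gamma_1, \ldots, \gamma_m$ of $\Gamma / \Delta$ yields $\Z\Gamma = \bigoplus_{j=1}^m (\Z\Delta)\gamma_j$ (here normality lets left and right cosets coincide). Consequently any finitely generated projective (resp.\ free) left $\Z\Gamma$-module is again finitely generated projective (resp.\ free) over $\Z\Delta$, being a direct summand of a finite free $\Z\Gamma$-module and hence of a finite free $\Z\Delta$-module. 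In particular the resolution \eqref{eq:1}, read over $\Z\Delta$, is an $FP$-resolution of $M$ of length $d$.

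Next I would check that $R = \Z\Delta$ satisfies the hypotheses of Theorem \ref{t1.4}. First, $R_p = 0$ because $\Z\Delta$ is free as a $\Z$-module, so multiplication by $p$ is injective. Second, $\Z\Delta$ is augmented by the restriction of $\varepsilon : \Z\Gamma \to \Z$. Third, and this is the point that makes the reduction work, the conditions $p^i M = p^{n_0} M$ for $i \ge n_0$ and $\varprojlim_i M_{p^i} = 0$ involve only the underlying abelian group of $M$ together with the subgroups $p^i M$ and $M_{p^i}$; they are therefore untouched by the change of ring and continue to hold for $M$ regarded as a $\Z\Delta$-module. Thus all hypotheses of Theorem \ref{t1.4} are in force for $R = \Z\Delta$, $M$, and the resolution \eqref{eq:1}.

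Finally I would read off the conclusions. Since $H_i(\Delta, M) = \Tor^{\Z\Delta}_i(\Z, M)$ by definition, part b) of Theorem \ref{t1.4} shows that the groups $H_i(\Delta, M)$ are finite, that their $p$-primary part is annihilated by $p^{n_0}$, and that they vanish for $i > d$. For the first exactness statement, part c) of Theorem \ref{t1.4} gives exactness of the complex with terms $\widehat{\Z\Delta}_{\Q} \otimes_{\Z\Delta} P_\nu$; since $\widehat{\Z\Delta}_{\Q} = \widehat{\Z\Delta} \otimes_{\Z} \Q = c_0(\Delta)$ by the definition of $c_0(\Delta)$, this is precisely the asserted sequence. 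In the case $n_0 = 0$, i.e.\ when $p : M \to M$ is surjective, the second half of Theorem \ref{t1.4} c) yields exactness of the complex with terms $\widehat{\Z\Delta} \otimes_{\Z\Delta} P_\nu$, giving the last claim.

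The argument is essentially a translation, so I do not expect a genuine obstacle; the only point demanding attention is the first step, namely that restriction along the finite-index inclusion $\Z\Delta \subseteq \Z\Gamma$ preserves the property of being a finitely generated projective resolution. This is exactly where the finiteness of the index is used. Once it is in place, the $p$-adic hypotheses transfer for free because they are intrinsic to $M$ as an abelian group, and the theorem follows as a direct specialization of Theorem \ref{t1.4}.
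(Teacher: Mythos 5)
Your proposal is correct and follows essentially the same route as the paper: both reduce to Theorem \ref{t1.4} for $R = \Z\Delta$ by observing that $\Z\Gamma$ is a free $\Z\Delta$-module of rank $(\Gamma:\Delta)$, so the given resolution remains an $FP$-resolution over $\Z\Delta$, while the $p$-adic hypotheses on $M$ are conditions on the underlying abelian group and transfer automatically. (One cosmetic remark: normality of $\Delta$ is not needed for the freeness of $\Z\Gamma$ over $\Z\Delta$; finite index suffices, which is also all the paper uses at this step.)
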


\begin{proof}
For any finitely generated $\Z\Gamma$-module $P$ there is an exact sequence of $\Z\Gamma$-modules for some $n$
\[
0 \longrightarrow Q \longrightarrow (\Z\Gamma)^n \longrightarrow P \longrightarrow 0 \; .
\]
If $P$ is projective, we have $P \oplus Q \cong (\Z\Gamma)^n$ as $\Z\Gamma$-modules. The $\Z \Gamma$-algebra $\Z \Gamma$ is a free $\Z\Delta$-module with basis $\gamma_1 , \ldots , \gamma_r$ a system of representatives of $\Delta \setminus \Gamma , r = (\Gamma : \Delta)$. Hence $P \oplus Q$ is a free $\Z\Delta$-module of rank $nr$ and in particular $P$ is finitely generated and projective as a $\Z\Delta$-module. Any $\Z\Gamma$-module $M$ of type $FP$ (or $FL$) i.e. with a resolution \eqref{eq:1} for $R = \Z \Gamma$ is therefore also of type $FP$ (resp. $FL$) as a $\Z\Delta$-module with the same resolution \eqref{eq:1} but now viewed as $\Z\Delta$-modules. It follows that a $p$-adically expansive $\Gamma$-module $M$ (of exponent $n_0$) is also $p$-adically expansive (of exponent $n_0$) as a $\Delta$-module. The result now follows from Theorem \ref{t1.4} applied to $M$ as an $R = \Z \Delta$-module.
\end{proof}

For a countable residually finite group $\Gamma$, sequences $\Gamma_n \to e$ as in the introduction always exist. If $M$ is a $p$-adically expansive $\Gamma$-module with $\varprojlim_i M_{p^i} = 0$ then by Theorem \ref{t2.1} the Euler characteristics
\begin{equation}
\label{eq:9a}
\chi (\Gamma_n , M) := \prod_i |H_i (\Gamma_n , M)|^{(-1)^i} \in \Q^{\times}
\end{equation}
exist for all $n$. We will later be concerned with their renormalized $p$-adic logarithmic limit as $\Gamma_n \to e$. 

We now discuss the assumptions on $M$ in Theorem \ref{t2.1} in the case where the $\Z\Gamma$-module $M$ is of type $FL$ with $d = 1$. There is thus a resolution
\begin{equation}
\label{eq:10}
0 \longrightarrow (\Z\Gamma)^s \xrightarrow{\partial} (\Z \Gamma)^r \longrightarrow M \longrightarrow 0
\end{equation}
and $M$ is $p$-adically expansive i.e. $c_0 (\Gamma) \otimes_{\Z\Gamma} M = 0$ if and only if the induced map
\[
c_0 (\Gamma)^s \xrightarrow{\partial} c_0 (\Gamma)^r
\]
is surjective. In this case we have $s \ge r$ as one sees by tensoring with $\Q_p$, viewed as a  $c_0 (\Gamma)$-module via the augmentation map $\hvarepsilon : c_0 (\Gamma) \to \Q_p$. The snake lemma applied to \eqref{eq:10} leads to an exact sequence
\[
0 \longrightarrow \varprojlim_i M_{p^i} \longrightarrow (\widehat{\Z\Gamma})^s \xrightarrow{\partial} (\widehat{\Z\Gamma})^r
\]
which implies that $\varprojlim M_{p^i} = 0$ if and only if $\partial : (\widehat{\Z\Gamma})^s \to (\widehat{\Z\Gamma})^r$ or equivalently $\partial : c_0 (\Gamma)^s \to c_0 (\Gamma)^r$ is injective. Replacing $c_0 (\Gamma)$ by $\widehat{\Z\Gamma}$ we get corresponding statements for $p$-adically expansive modules of degree zero. This proves the following proposition noting that $\Z\Gamma$ is a subring of $c_0 (\Gamma)$. 

\begin{prop} \label{t2.2}
In the situation \eqref{eq:10} the $\Z\Gamma$-module $M$ is $p$-adically expansive (of degree zero) and satisfies $\varprojlim_i M_{p^i} = 0$ if and only if $r = s$ and $\partial \in M_r (\Z \Gamma) \cap \GL_r (c_0 (\Gamma))$ (resp. $\partial \in M_r (\Z \Gamma) \cap \GL_r (\widehat{\Z\Gamma})$). 
\end{prop}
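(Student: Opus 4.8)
The plan is to translate both hypotheses into properties of the boundary map $\partial$ in \eqref{eq:10} and then combine them; the two translations are exactly what the discussion preceding the proposition supplies, so I would begin by recording them. Tensoring the resolution \eqref{eq:10} with $c_0 (\Gamma)$ (resp.\ with $\widehat{\Z\Gamma}$) and using right exactness, $p$-adic expansiveness, i.e.\ $c_0 (\Gamma) \otimes_{\Z\Gamma} M = 0$ (resp.\ expansiveness of degree zero, i.e.\ $\widehat{\Z\Gamma} \otimes_{\Z\Gamma} M = 0$), is equivalent to surjectivity of $\partial : c_0 (\Gamma)^s \to c_0 (\Gamma)^r$ (resp.\ $\partial : (\widehat{\Z\Gamma})^s \to (\widehat{\Z\Gamma})^r$). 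For the second hypothesis, the snake lemma applied to \eqref{eq:10} (equivalently Lemma \ref{t1.2} b)) identifies $\varprojlim_i M_{p^i}$ with the kernel of $\partial$ on $(\widehat{\Z\Gamma})^s$, giving the exact sequence
\[
0 \longrightarrow \varprojlim_i M_{p^i} \longrightarrow (\widehat{\Z\Gamma})^s \xrightarrow{\partial} (\widehat{\Z\Gamma})^r .
\]
Thus $\varprojlim_i M_{p^i} = 0$ is equivalent to injectivity of $\partial$ over $\widehat{\Z\Gamma}$; since $c_0 (\Gamma) = \widehat{\Z\Gamma} \otimes_{\Z} \Q$ and $\Q$ is flat over $\Z$, the kernel over $c_0 (\Gamma)$ is the kernel over $\widehat{\Z\Gamma}$ tensored with $\Q$, so injectivity over $\widehat{\Z\Gamma}$ and over $c_0 (\Gamma)$ are equivalent.

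For the forward implication I would combine the two. Assuming $M$ is $p$-adically expansive and $\varprojlim_i M_{p^i} = 0$, the map $\partial : c_0 (\Gamma)^s \to c_0 (\Gamma)^r$ is simultaneously surjective and injective, hence a bijection and therefore an isomorphism of free $c_0 (\Gamma)$-modules. To extract $r = s$ I would invoke invariant basis number: applying $\Q_p \otimes_{c_0 (\Gamma)} (-)$ along the augmentation $\hvarepsilon : c_0 (\Gamma) \to \Q_p$ turns this into an isomorphism $\Q_p^s \silo \Q_p^r$ of vector spaces, forcing $r = s$. (Surjectivity alone already gives $s \ge r$ in this way, as observed before the proposition.) Once $r = s$, the map $\partial$ is given by a square matrix in $M_r(\Z\Gamma)$ that is invertible over $c_0 (\Gamma)$, i.e.\ $\partial \in M_r (\Z\Gamma) \cap \GL_r (c_0 (\Gamma))$. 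The degree-zero statement is the verbatim analogue with $c_0 (\Gamma)$ replaced by $\widehat{\Z\Gamma}$, the augmentation now being $\hvarepsilon : \widehat{\Z\Gamma} \to \Z_p$, which again has invariant basis number since $\Z_p \neq 0$.

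The reverse implication falls out of the same dictionary. If $r = s$ and $\partial \in M_r (\Z\Gamma) \cap \GL_r (c_0 (\Gamma))$, then $\partial$ is bijective over $c_0 (\Gamma)$; its surjectivity gives $c_0 (\Gamma) \otimes_{\Z\Gamma} M = 0$, so $M$ is $p$-adically expansive, and its injectivity gives $\varprojlim_i M_{p^i} = 0$ via the displayed exact sequence. The $\widehat{\Z\Gamma}$-version is identical. I expect the only step beyond bookkeeping to be the invariant basis number argument for $r = s$: one must ensure the relevant augmentation lands in a nonzero ring (the field $\Q_p$, resp.\ $\Z_p$) so that an isomorphism of free modules descends to one of vector spaces (resp.\ finite free $\Z_p$-modules) of equal rank. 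The other substantive input, the identification $\varprojlim_i M_{p^i} = \Ker \partial$, is already provided by Lemma \ref{t1.2}.
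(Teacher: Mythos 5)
Your proposal is correct and follows essentially the same route as the paper: the discussion preceding the proposition establishes exactly your dictionary (right exactness of $c_0(\Gamma)\otimes_{\Z\Gamma}-$ for surjectivity, the snake lemma identifying $\varprojlim_i M_{p^i}$ with $\Ker\partial$ over $\widehat{\Z\Gamma}$, and tensoring with $\Q_p$ via the augmentation to compare $r$ and $s$), and the proposition is then read off just as you do. Your explicit invariant-basis-number step for $r=s$ and the flatness remark relating the kernels over $\widehat{\Z\Gamma}$ and $c_0(\Gamma)$ only make precise what the paper leaves implicit.
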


A ring $R$ is called directly finite if $ab = 1$ in $R$ implies $ba = 1$. If the ring $M_r (c_0 (\Gamma))$ is directly finite, any surjective $c_0 (\Gamma)$-linear map $c_0 (\Gamma)^r \to c_o (\Gamma)^r$ is also injective. In an exact sequence
\[
0 \longrightarrow (\Z\Gamma)^r \xrightarrow{\partial} (\Z\Gamma)^r \longrightarrow M \longrightarrow 0
\]
with $p$-adically expansive $M$ the condition $\varprojlim_i M_{p^i} = 0$ is therefore automatic if $M_r (c_0 (\Gamma))$ is directly finite. In the somewhat analogous case of the real or complex $L^1$-group algebra of $\Gamma$ it is known that $M_r (L^1 (\Gamma))$ is directly finite. This follows from a result of Kaplansky \cite{K}, p. 122. For residually finite groups this is easy to see and the same proof works in the $p$-adic case, see \cite[section 7.4]{B}:

\begin{theorem}[Br\"auer] \label{t2.3}
For a residually finite group $\Gamma$ the ring $M_r (c_0 (\Gamma))$ is directly finite for each $r \ge 1$.
\end{theorem}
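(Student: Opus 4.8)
The plan is to reduce direct finiteness of $M_r(c_0(\Gamma))$ to direct finiteness of matrix algebras over the group algebras $\Q_p[G]$ of the finite quotients $G = \Gamma/N$, and then to show that the family of all such reductions separates the elements of $c_0(\Gamma)$.

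First I would note that every finite-index normal subgroup $N \trianglelefteq \Gamma$, with finite quotient $G = \Gamma/N$, induces a continuous $\Q_p$-algebra homomorphism
\[ \pi_N : c_0(\Gamma) \longrightarrow \Q_p[G], \qquad \sum_{\gamma} x_\gamma \gamma \longmapsto \sum_{g \in G} \Big( \sum_{\gamma \mapsto g} x_\gamma \Big) g . \]
The inner sums converge in $\Q_p$ because $|x_\gamma| \to 0$, and $\pi_N$ is multiplicative: it agrees with the functorial map $\Z\Gamma \to \Z[G]$ on the dense subring $\Z\Gamma$ and is nonincreasing for the sup-norm $\|\sum x_\gamma \gamma\| = \sup_\gamma |x_\gamma|$ on $c_0(\Gamma)$, so it extends by continuity. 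Applying $\pi_N$ entrywise gives a ring homomorphism $M_r(c_0(\Gamma)) \to M_r(\Q_p[G])$, which I also denote $\pi_N$.

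Next, since $M_r(\Q_p[G])$ is a finite-dimensional $\Q_p$-algebra it is directly finite, because in a finite-dimensional algebra over a field a one-sided inverse is automatically two-sided (an injective linear endomorphism of a finite-dimensional space is bijective). Hence, for $A, B \in M_r(c_0(\Gamma))$ with $AB = 1$, applying $\pi_N$ gives $\pi_N(A)\pi_N(B) = 1$, so $\pi_N(B)\pi_N(A) = 1$, that is $\pi_N(BA - 1) = 0$ for every $N$. It therefore suffices to prove that the reductions are jointly injective, i.e. $\bigcap_N \Ker \pi_N = 0$ on $c_0(\Gamma)$, for this forces $BA = 1$.

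The crux is this injectivity. Suppose $x = \sum_\gamma x_\gamma \gamma \neq 0$ and put $c = \max_\gamma |x_\gamma| > 0$; the maximum is attained and the set $S = \{\gamma : |x_\gamma| = c\}$ is finite, because $|x_\gamma| \to 0$. Fix $\gamma_0 \in S$. As $\Gamma$ is residually finite and $S$ is finite, I can choose a finite-index normal $N$ with $\gamma_0^{-1}\gamma \notin N$ for every $\gamma \in S \setminus \{\gamma_0\}$ (intersect finitely many normal subgroups, one separating each such element from $e$). Then in the coset $g_0 = \gamma_0 N$ the index $\gamma_0$ is the only one with $|x_{\gamma_0}| = c$, while every other $\gamma$ mapping to $g_0$ lies outside $S$ and hence satisfies $|x_\gamma| \le c/p < c$. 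By the ultrametric inequality the coefficient $\sum_{\gamma \mapsto g_0} x_\gamma$ of $g_0$ in $\pi_N(x)$ has absolute value exactly $c \neq 0$, so $\pi_N(x) \neq 0$. This is precisely the step where the defining decay condition of $c_0(\Gamma)$ and the non-archimedean absolute value are both indispensable: the decay makes $S$ finite, so that a single finite quotient isolates the dominant terms, and the strict ultrametric inequality prevents the infinitely many subdominant terms in a coset from cancelling the dominant one. Everything preceding this—functoriality of $\pi_N$ and direct finiteness over a field—is formal.
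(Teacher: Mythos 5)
Your proposal is correct and follows essentially the same route as the paper: reduce to the finite quotients $\Q_p[\Gamma/N]$, use direct finiteness of finite-dimensional algebras, and prove joint injectivity of the reduction maps via residual finiteness and the ultrametric inequality. The only (immaterial) difference is that the paper first uses the ideal property of the kernel to translate a hypothetical nonzero element so that its coefficient at $e$ is dominant and then separates $S$ from $e$, whereas you work directly in the coset of an arbitrary maximal-modulus index $\gamma_0$ and separate $\gamma_0$ from the rest of $S$.
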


\begin{proof}
For a cofinite normal subgroup $\Delta$ of $\Gamma$ set $\oGamma = \Gamma / \Delta$ and consider the natural ring homomorphism $c_0 (\Gamma) \to c_0 (\oGamma) = \Q_p [\oGamma]$ sending $x = \sum_{\gamma} x_{\gamma} \gamma$ to $\ox = \sum_{\gamma} x_{\gamma} \gamma \Delta = \sum_{\delta \in \oGamma} y_{\delta} \delta$ where $y_{\delta} = \sum_{\gamma \in \delta} x_{\gamma}$. We claim that the ring homomorphism:
\[
\alpha : c_0 (\Gamma) \longrightarrow \prod_{\Delta} c_0 (\Gamma / \Delta)
\]
is injective, where the product runs over all $\Delta$ as above. Assume $\ker \alpha \neq 0$. Since $\ker \alpha$ is an ideal there is then an element $x \in \ker \alpha$ with $x_e \neq 0$. By definition of $c_0 (\Gamma)$ the set $S \subset \Gamma$ of $e \neq \gamma \in \Gamma$ with $|x_{\gamma}| \ge |x_e| > 0$ is finite. Since $\Gamma$ is residually finite there is a cofinite normal subgroup $\Delta$ of $\Gamma$ with $\Delta \cap S = \emptyset$. Hence we have $|x_{\gamma}| < |x_e|$ for all $e \neq \gamma \in \Delta$. By the ultrametric inequality it follows that
\[
\Big| \sum_{\gamma \in \Delta} x_{\gamma} \Big| = |x_e| \neq 0 \; .
\]
On the other hand $\sum_{\gamma \in \Delta} x_{\gamma} = 0$ since $x$ is mapped to zero in $c_0 (\Gamma / \Delta)$. This is a contradiction and hence $\alpha$ is injective. It follows that the natural map
\[
M_r (c_0 (\Gamma)) \longrightarrow \prod_{\Delta} M_r (c_0 (\Gamma / \Delta))
\]
is injective as well, \cite[Corollary 7.15]{B}. Each of the rings $M_r (c_0 (\Gamma / \Delta))$ is directly finite since $c_0 (\Gamma / \Delta)$ is finite dimensional as a $\Q_p$-vector space and $M_r (c_0 (\Gamma / \Delta))$ is naturally a subring of $\End_{\Q_p} (c_0 (\Gamma / \Delta))$. Hence $M_r (c_0 (\Gamma))$ is directly finite as well. 
\end{proof}
\section{Review of a $p$-adic determinant} \label{sec:3}
In this section which is based on \cite{D} we review the definition of a $p$-adic analogue of the Fuglede--Kadison determinant and its properties. 

Let $B$ be a $\Q_p$-Banach algebra whose norm takes values in $p^{\Z} \cup \{ 0 \}$. Let $\tr_B : B \to \Q_p$ be a trace functional i.e. a continuous linear map with $\tr_B (ab) = \tr_B (ba)$ for all $a,b \in B$. Consider the $\Z_p$-Banach algebra $A = B^0 := \{ b \in B \mid \| b\| \le 1 \}$ and let
\[
U^1 = 1 + pA = \{ b \in B \mid \| 1-b\| < 1 \}
\]
be the normal subgroup of $1$-units in $A^{\times}$. Note here that for $a \in A$ the convergent series
\[
(1 + pa)^{-1} := \sum^{\infty}_{\nu =0} (-pa)^{\nu}
\]
gives an inverse to $1 + pa \in U^1$. There is an exact sequence of groups
\begin{equation}
\label{eq:10d}
1 \longrightarrow U^1 \longrightarrow A^{\times} \xrightarrow{\pi} (A / pA)^{\times} \longrightarrow 1 \; .
\end{equation}
The projection $\pi$ is surjective since for $\oa = a + pA$ in $(A / pA)^{\times}$ there is some $b \in A$ with $ab = 1 + pc$ and $ba = 1 + bd$ with $c,d \in A$. Since $1 + pc$ and $1 + pd$ are units it follows that $a$ is a unit as well.

The logarithmic series
\[
\log : U^1 \longrightarrow A \; , \; \log u = - \sum^{\infty}_{\nu=1} \frac{(1-u)^{\nu}}{\nu}
\]
converges and defines a continuous map. In \cite[Theorem 4.1]{D} it is shown that the map
\begin{equation}
\label{eq:11}
\tr_B \log : U^1 \longrightarrow \Q_p
\end{equation}
is a homomorphism. This is a consequence of the $p$-adic Campbell--Baker--Hausdorff formula. If $\tr_B (A) \subset \Z_p$, then $\tr_B \log$ takes values in $\Z_p$.

Let $\Gamma$ be a discrete group and recall the algebra
\[
c_0 (\Gamma) = \widehat{\Z\Gamma} \otimes_{\Z_p} \Q_p = \Big\{ \sum_{\gamma} x_{\gamma} \gamma \mid x_{\gamma} \in \Q_p \; \text{with} \; |x_{\gamma}| \to 0 \; \text{for} \; \gamma \to \infty \Big\} \; .
\]
Equipped with the norm
\[
\Big\| \sum_{\gamma} x_{\gamma} \gamma \Big\| = \max_{\gamma} |x_{\gamma}|
\]
it becomes a $\Q_p$-Banach algebra and the norm topology induced on $\widehat{\Z\Gamma} = c_0 (\Gamma)^0$ is the $p$-adic topology on $\widehat{\Z\Gamma}$. A short calculation shows that the map
\[
\tr_{\Gamma} : c_0 (\Gamma) \longrightarrow \Q_p \; , \; \sum_{\gamma} x_{\gamma} \gamma \longmapsto x_e
\]
defines a trace functional. More generally, $B = M_r (c_o (\Gamma))$ with norm $\| (a_{ij})\| = \max_{i,j} \| a_{ij}\|$ is a $\Q_p$-Banach algebra with trace functional
\[
\tr_{\Gamma} : M_r (c_0 (\Gamma)) \xrightarrow{\tr} c_0 (\Gamma) \xrightarrow{\tr_{\Gamma}} \Q_p \; .
\]
The algebra $A = B^0$ is given by $M_r (\widehat{\Z\Gamma})$ and we have $U^1 = 1 + p M_r (\widehat{\Z\Gamma})$. The exact sequence \eqref{eq:10d} reads as follows here:
\begin{equation}
\label{eq:12}
1 \longrightarrow 1+p M_r (\widehat{\Z\Gamma}) \longrightarrow \GL_r (\widehat{\Z\Gamma}) \longrightarrow \GL_r (\F_p \Gamma) \longrightarrow 1 \; .
\end{equation}
The map \eqref{eq:11} is denoted by
\begin{equation}
\label{eq:13}
\log_p \ddet_{\Gamma} := \tr_{\Gamma} \log : 1 + p M_r (\widehat{\Z\Gamma}) \longrightarrow \Z_p \; .
\end{equation}
It is a continuous homomorphism of groups. We wish to extend $\log_p \det_{\Gamma}$ to $\GL_r (\widehat{\Z\Gamma})$. For this it is helpful to pass to $\GL_{\infty}$ and then to $K_1$. For each of the algebras $R = \Z\Gamma , \widehat{\Z\Gamma}, c_0 (\Gamma)$ and $\F_p \Gamma$ we set $K_T (R) = K_1 (R) / \langle \pm \Gamma \rangle$ where $\langle \rangle : R^{\times} \to K_1 (R)$ is the natural map. Note that $K_T (\Z \Gamma)$ is the Whitehead group $\Wh (\Gamma)$ of $\Gamma$ and that $K_T (\F_p \Gamma)$ is torsion if and only if $\Wh^{\F_p} (\Gamma) := K_1 (\F_p \Gamma) / \langle \F^{\times}_p \cdot \Gamma \rangle$ is torsion. The kernel $\Nh$ of the exact sequence induced by \eqref{eq:12},
\[
1 \longrightarrow \Nh \longrightarrow K_T (\widehat{\Z\Gamma}) \longrightarrow K_T (\F_p \Gamma) \longrightarrow 1
\]
is a quotient of $1 + p M_{\infty} (\widehat{\Z\Gamma})$ where $M_{\infty} (A)$ is the non-unital algebra of infinite matrices $(a_{ij})_{i,j \ge 1}$ with only finitely many non-zero entries. For residually finite groups $\Gamma$ we showed in the proof of \cite[Theorem 5.1]{D} that the homomorphism $\log_p \det_{\Gamma}$ of \eqref{eq:13} factors over $\Nh$ and conjectured that this should be true for all groups $\Gamma$. We get the following result, cf. \cite[Theorem 5.1]{D}.

\begin{theorem}
\label{t3.1}
Let $\Gamma$ be a countable residually finite discrete group such that $\Wh^{\F_p} (\Gamma)$ is torsion. Then there is a unique homomorphism
\[
\log_p \ddet_{\Gamma} : K_T (\widehat{\Z\Gamma}) \longrightarrow \Q_p
\]
with the following property: For every $r \ge 1$ the composition
\[
1 + p M_r (\widehat{\Z\Gamma}) \hookrightarrow \GL_r (\widehat{\Z\Gamma}) \longrightarrow K_1 (\widehat{\Z\Gamma}) \longrightarrow K_T (\widehat{\Z\Gamma}) \xrightarrow{\log_p \det_{\Gamma}} \Q_p
\]
coincides with the map \eqref{eq:13}.
\end{theorem}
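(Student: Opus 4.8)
The plan is to reduce the statement to an extension problem for abelian-group homomorphisms along the exact sequence
\[
1 \longrightarrow \Nh \longrightarrow K_T(\widehat{\Z\Gamma}) \longrightarrow K_T(\F_p \Gamma) \longrightarrow 1,
\]
solving it by using divisibility of $\Q_p$ for existence and the torsion hypothesis on $\Wh^{\F_p}(\Gamma)$ for uniqueness. First I would assemble the input coming from \cite{D}. The maps \eqref{eq:13} for the various $r$ are compatible under the stabilization $A \mapsto \mathrm{diag}(A,1)$, since $\tr_{\Gamma}\log(\mathrm{diag}(1+pA,1)) = \tr_{\Gamma}\log(1+pA)$; hence they glue to a single homomorphism on $1 + p M_{\infty}(\widehat{\Z\Gamma})$. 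Because $\Gamma$ is residually finite, the cited result from \cite[Theorem 5.1]{D} says exactly that this homomorphism descends to the quotient $\Nh$ of $1 + p M_{\infty}(\widehat{\Z\Gamma})$, yielding a homomorphism $\varphi : \Nh \to \Z_p \subset \Q_p$.

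Next I would record the translation of the stated property into a condition on the restriction to $\Nh$. For each $r$ the composite $1 + p M_r(\widehat{\Z\Gamma}) \to \GL_r(\widehat{\Z\Gamma}) \to K_1(\widehat{\Z\Gamma}) \to K_T(\widehat{\Z\Gamma})$ lands in $\Nh$, since $1 + p M_r(\widehat{\Z\Gamma})$ maps to the identity in $\GL_r(\F_p \Gamma)$; and as $r$ varies these images cover all of $\Nh$, because $\Nh$ is the image of $1 + p M_{\infty}(\widehat{\Z\Gamma}) = \bigcup_r (1 + p M_r(\widehat{\Z\Gamma}))$. Consequently a homomorphism $K_T(\widehat{\Z\Gamma}) \to \Q_p$ has the compatibility with \eqref{eq:13} demanded in the theorem if and only if its restriction to $\Nh$ equals $\varphi$.

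For existence I would invoke injectivity. The group $\Q_p$ is a $\Q$-vector space, hence divisible, hence an injective object in the category of abelian groups. Applying the extension property to the inclusion $\Nh \hookrightarrow K_T(\widehat{\Z\Gamma})$ and to $\varphi : \Nh \to \Q_p$ produces a homomorphism $\log_p \det_{\Gamma} : K_T(\widehat{\Z\Gamma}) \to \Q_p$ restricting to $\varphi$ on $\Nh$, which by the previous paragraph has the required property. Note that existence alone does not use the torsion hypothesis.

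The torsion hypothesis enters only for uniqueness, which I expect to be the cleanest rather than the hardest part. Any two homomorphisms with the stated property both restrict to $\varphi$ on $\Nh$, so their difference kills $\Nh$ and factors through $K_T(\widehat{\Z\Gamma})/\Nh = K_T(\F_p \Gamma)$. By the remark preceding the theorem, $K_T(\F_p \Gamma)$ is torsion precisely because $\Wh^{\F_p}(\Gamma)$ is, and $\Q_p$ is torsion-free, so $\Hom(K_T(\F_p \Gamma), \Q_p) = 0$ and the difference vanishes. The only genuine obstacle is the imported fact that $\tr_{\Gamma}\log$ factors over $\Nh$; granting that — as the excerpt does for residually finite $\Gamma$ — everything else is formal homological algebra.
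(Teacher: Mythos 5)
Your proposal is correct, and it isolates the right reduction: everything hinges on the imported fact from \cite[Theorem 5.1]{D} that $\tr_{\Gamma}\log$ descends to the quotient $\Nh$ of $1+pM_{\infty}(\widehat{\Z\Gamma})$, after which the theorem becomes an extension problem along $\Nh \hookrightarrow K_T(\widehat{\Z\Gamma})$ with torsion cokernel $K_T(\F_p\Gamma)$. Where you diverge from the paper is in how the extension is produced. The paper (following \cite{D}, and as recorded in the Remark immediately after the theorem) extends \emph{constructively}: for $x \in K_T(\widehat{\Z\Gamma})$ one picks $N\ge 1$ with $x^N \in \Nh$ --- this is where the torsion hypothesis enters --- and sets $\log_p\ddet_{\Gamma}(x) = \frac{1}{N}\varphi(x^N)$; well-definedness, additivity, and uniqueness all follow at once from $\Q_p$ being uniquely divisible and torsion-free. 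You instead get existence abstractly from injectivity of the divisible group $\Q_p$ (correctly noting that this half needs no hypothesis on $\Wh^{\F_p}(\Gamma)$) and prove uniqueness separately from $\Hom(K_T(\F_p\Gamma),\Q_p)=0$. Both arguments are valid and of comparable length. What the paper's route buys is the explicit formula $\log_p\ddet_{\Gamma} f = \frac{1}{N}\tr_{\Gamma}\log g$ for $f^N = i(\pm\gamma)\varepsilon g$, which is what is actually used downstream (e.g.\ in deducing the approximation formula of Theorem \ref{t3.2}); what your route buys is the clean observation that the torsion hypothesis is needed only to pin the extension down, not to produce one. Of course, by your own uniqueness step the two constructions yield the same homomorphism.
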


\begin{rem}
$\Wh^{\F_p} (\Gamma)$ is torsion for torsion free elementary amenable groups by \cite{FL} and for a class of groups that comprises all word hyperbolic groups by \cite{BLR}. For $\Gamma$ as in the theorem we define the homomorphism $\log_p \det_{\Gamma}$ on $\GL_r (\widehat{\Z\Gamma})$ to be the composition
\begin{equation}
\label{eq:14}
\log_p \ddet_{\Gamma} : \GL_r (\widehat{\Z\Gamma}) \longrightarrow K_T (\widehat{\Z\Gamma}) \xrightarrow{\log_p \det_{\Gamma}} \Q_p \; .
\end{equation}
Explicitely it is given as follows: For a matrix $f \in \GL_r (\widehat{\Z\Gamma})$ there are integers $N \ge 1$ and $s \ge r$ such that in $\GL_s (\widehat{\Z\Gamma})$ we have
\[
f^N = i (\pm \gamma) \varepsilon g \; .
\]
Here $\varepsilon$ is a product of $s \times s$-elementary matrices (identity matrices with one off-diagonal entry), $g$ is in $1 + p M_s (\widehat{\Z\Gamma})$ and $i (\pm \gamma)$ for some $\gamma \in \Gamma$ is the $s \times s$-matrix $\left( \begin{smallmatrix} \pm \gamma & 0 \\ 0 & 1_{s-1} \end{smallmatrix} \right)$. Then we have
\[
\log_p \ddet_{\Gamma} f = \frac{1}{N} \log_p \ddet_{\Gamma} g = \frac{1}{N} \tr_{\Gamma} \log g \; .
\]
\end{rem}

For a countable residually finite group $\Gamma$ let $(\Gamma_n)$ be a sequence of cofinite normal subgroups with $\Gamma_n \to e$. The quotient $\Gamma^{(n)} = \Gamma / \Gamma_n$ is a finite group. For $f \in \GL_r (c_0 (\Gamma))$ let $f^{(n)}$ be its image in 
\[
\GL_r (c_0 (\Gamma^{(n)})) = \GL_r (\Q_p [\Gamma^{(n)}]) = \Aut_{\Q_p [\Gamma^{(n)}]} (\Q_p [\Gamma^{(n)}]^r)
\]
under the canonical homomorphism. We write $\det_{\Q_p} (f^{(n)})$ for the determinant of $f^{(n)}$ viewed as a $\Q_p$-linear automorphism of the $r (\Gamma : \Gamma_n)$-dimensional $\Q_p$-vector space $\Q_p [\Gamma^{(n)}]^r$. Let $\log_p : \Q^{\times}_p \to \Z_p$ be the $p$-adic logarithm normalized by the condition $\log_p (p) = 0$. 

The logarithmic determinant $\log_p \det_{\Gamma}$ which is defined for operators on generally infinite dimensional $p$-adic Banach spaces can be approximated by renormalized logarithmic determinants of operators on finite dimensional vector spaces if $\Gamma$ is residually finite. In \cite[Proposition 5.5]{D} we proved

\begin{theorem}
\label{t3.2}
For a countable residually finite discrete group $\Gamma$ and a sequence $\Gamma_n \to e$ the following formula holds if $f \in 1 + p M_r (\widehat{\Z\Gamma})$ or if $f \in \GL_r (\widehat{\Z\Gamma})$ and $\Wh^{\F_p} (\Gamma)$ is torsion:
\[
\log_p \ddet_{\Gamma} f = \lim_{n\to \infty} (\Gamma : \Gamma_n)^{-1} \log_p (\ddet_{\Q_p} (f^{(n)})) \quad \text{in} \; \Q_p \; .
\]
\end{theorem}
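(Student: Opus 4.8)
The plan is to prove the formula in two stages: first for $f \in 1 + p M_r (\widehat{\Z\Gamma})$ by a direct computation, and then for general $f \in \GL_r (\widehat{\Z\Gamma})$ (when $\Wh^{\F_p} (\Gamma)$ is torsion) by reducing to that case through the factorization underlying the definition \eqref{eq:14}. The guiding idea is that on the quotient $\Gamma^{(n)} = \Gamma / \Gamma_n$ the renormalized logarithmic determinant of the left regular representation computes the coefficient-of-$e$ trace $\tr_{\Gamma^{(n)}}$, and that $\tr_{\Gamma^{(n)}}$ converges to $\tr_{\Gamma}$ as $\Gamma_n \to e$.

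For $f = 1 + pa \in 1 + p M_r (\widehat{\Z\Gamma})$ the heart of the matter is the identity
\[
(\Gamma : \Gamma_n)^{-1} \log_p \det_{\Q_p} (f^{(n)}) = \tr_{\Gamma^{(n)}} (\log f^{(n)}) \; .
\]
To obtain it, observe that $f^{(n)} = 1 + p a^{(n)}$ has entries in $\Z_p [\Gamma^{(n)}]$, so the operator it induces on $\Q_p [\Gamma^{(n)}]^r$ is $I + pB$ with $B$ integral; its eigenvalues over $\oQ_p$ are therefore $1$-units, whence $\det_{\Q_p} (f^{(n)}) \in 1 + p \Z_p$ and the matrix identity $\log_p \det = \Tr_{\Q_p} \log$ is valid for the branch with $\log_p (p) = 0$. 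Since the regular representation is an algebra homomorphism it commutes with the convergent logarithm series, so the logarithm of that operator is the operator attached to $\log f^{(n)}$; moreover, for any $x \in M_r (\Q_p [\Gamma^{(n)}])$ the trace of the induced operator equals $(\Gamma : \Gamma_n) \tr_{\Gamma^{(n)}} (x)$, because left translation by $\delta \in \Gamma^{(n)}$ is a permutation matrix of trace $0$ unless $\delta = e$. Combining these gives the displayed identity. Writing $\log f = \sum_{\gamma} c_{\gamma} \gamma$ with matrix coefficients obeying $\| c_{\gamma} \| \to 0$, continuity of $\widehat{\Z\Gamma} \to \Z_p [\Gamma^{(n)}]$ gives $\tr_{\Gamma^{(n)}} (\log f^{(n)}) = \tr_{\Gamma} (\log f) + \sum_{e \neq \gamma \in \Gamma_n} \tr (c_{\gamma})$, and the tail tends to $0$: for each $\varepsilon > 0$ only finitely many $\gamma$ satisfy $\| c_{\gamma}\| \ge \varepsilon$, and since $\Gamma_n \to e$ every nontrivial group element lies in only finitely many $\Gamma_n$, so for large $n$ the subgroup $\Gamma_n$ contains none of those finitely many exceptional $\gamma \neq e$; the ultrametric inequality then bounds the tail by $\varepsilon$. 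As $\tr_{\Gamma} (\log f) = \log_p \ddet_{\Gamma} f$ by \eqref{eq:13}, this case is done.

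For general $f \in \GL_r (\widehat{\Z\Gamma})$ with $\Wh^{\F_p} (\Gamma)$ torsion I use Theorem \ref{t3.1}: there are $N \ge 1$, $s \ge r$ with $f^N = i (\pm \gamma) \varepsilon g$ in $\GL_s (\widehat{\Z\Gamma})$, where $\varepsilon$ is a product of elementary matrices and $g \in 1 + p M_s (\widehat{\Z\Gamma})$. Write $L (h) = \lim_n (\Gamma : \Gamma_n)^{-1} \log_p \det_{\Q_p} (h^{(n)})$ for the limit to be established. Because $\det_{\Q_p}$ is multiplicative and $\log_p$ is a homomorphism of $\Q^{\times}_p$, whenever two of $L (h_1), L (h_2), L (h_1 h_2)$ exist the third does and $L (h_1 h_2) = L (h_1) + L (h_2)$. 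Now $i (\pm \gamma)^{(n)}$ acts on $\Q_p [\Gamma^{(n)}]^s$ as a signed permutation, so $\det_{\Q_p} (i (\pm \gamma)^{(n)}) = \pm 1$ and $L (i (\pm \gamma)) = 0$; an elementary matrix induces a $\Q_p$-operator of determinant $1$, so $L (\varepsilon) = 0$; and $L (g) = \log_p \ddet_{\Gamma} g$ by the previous case. Hence $L (f^N)$ exists and equals $\log_p \ddet_{\Gamma} g$. Finally $\log_p \det_{\Q_p} ((f^N)^{(n)}) = N \log_p \det_{\Q_p} (f^{(n)})$ shows that $L (f)$ exists with $N L (f) = L (f^N) = \log_p \ddet_{\Gamma} g = N \log_p \ddet_{\Gamma} f$, the last equality by \eqref{eq:14}; dividing by $N$ yields the theorem.

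The main obstacle is the opening identity of the first case, which converts a determinant into the coefficient-of-$e$ trace $\tr_{\Gamma^{(n)}}$: this rests on the matrix identity $\log_p \det = \Tr_{\Q_p} \log$ for the normalized branch — legitimate exactly because the eigenvalues of $f^{(n)}$ are $1$-units — together with the regular-representation trace formula. Once this is in place the passage to the limit is governed solely by the $c_0$-decay of the coefficients of $\log f$ and the defining property of $\Gamma_n \to e$, while the second case is essentially formal given the multiplicativity of $L$ and its vanishing on elementary and permutation matrices; the torsion hypothesis on $\Wh^{\F_p} (\Gamma)$ serves only to make the factorization of Theorem \ref{t3.1} available.
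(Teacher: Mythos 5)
Your proof is correct and follows essentially the same route as the paper, which defers to \cite[Proposition 5.5]{D}: first the case $f \in 1 + p M_r (\widehat{\Z\Gamma})$ via the identity $\log_p \ddet_{\Q_p} (f^{(n)}) = (\Gamma : \Gamma_n) \tr_{\Gamma^{(n)}} (\log f^{(n)})$ (legitimate since the eigenvalues of $f^{(n)}$ are $1$-units) together with the convergence $\tr_{\Gamma^{(n)}} \to \tr_{\Gamma}$ forced by the $c_0$-decay of the coefficients of $\log f$ and the fact that each $\gamma \neq e$ lies in only finitely many $\Gamma_n$; then the general case by the stabilized factorization $f^N = i (\pm \gamma) \varepsilon g$ from Theorem \ref{t3.1} and the vanishing of the renormalized limit on signed permutation and elementary matrices.
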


\begin{rem}
Note that the nature of both sides in this formula is very different. For example, for $f \in 1 + p M_r (\widehat{\Z\Gamma}) \subset \GL_r (\widehat{\Z\Gamma})$ the left hand side is defined for all $\Gamma$ by \eqref{eq:13} whereas the right hand side requires $\Gamma$ to be residually finite. We would also like to define the left hand side for all $f \in \GL_r (c_0 (\Gamma))$, so that the formula holds in this generality but we do not know how to do this in general. 
\end{rem}

\begin{cor}
\label{t3.3}
Let $\Gamma$ be a countable residually finite group. We have $\log \det_{\Gamma} f = 0$ if $f \in 1 + p M_r (\Z\Gamma)$ or if $f \in \GL_r (\Z\Gamma)$ and $\Wh^{\F_p} (\Gamma)$ is torsion.
\end{cor}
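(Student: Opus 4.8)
The plan is to derive the Corollary directly from Theorem \ref{t3.2}. The two conditions imposed on $f$ are precisely the images under $\Z\Gamma \hookrightarrow \widehat{\Z\Gamma}$ of the two hypotheses of that theorem: an element of $1 + pM_r(\Z\Gamma)$ lies in $1 + pM_r(\widehat{\Z\Gamma}) = U^1$, where the approximation formula holds with no restriction on $\Gamma$, whereas an element of $\GL_r(\Z\Gamma)$ lies in $\GL_r(\widehat{\Z\Gamma})$, for which we have assumed $\Wh^{\F_p}(\Gamma)$ torsion so that $\log_p\det_\Gamma f$ is defined via Theorem \ref{t3.1} and the formula applies. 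In either case Theorem \ref{t3.2} gives
\[
\log_p\det_\Gamma f = \lim_{n\to\infty}(\Gamma:\Gamma_n)^{-1}\log_p(\det_{\Q_p}(f^{(n)})),
\]
so it is enough to control the finite-level determinants $\det_{\Q_p}(f^{(n)})$.

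The point is that $f$ has entries in the \emph{integral} group ring. Its reduction $f^{(n)}$ therefore lies in $M_r(\Z[\Gamma^{(n)}])$, and acting on the left-regular representation $\Q_p[\Gamma^{(n)}]^r$ it preserves the $\Z$-lattice $L = \Z[\Gamma^{(n)}]^r$ of rank $r(\Gamma:\Gamma_n)$. Computing in a $\Z$-basis of $L$ shows that $\det_{\Q_p}(f^{(n)})$ is the ordinary determinant of an integer matrix, and this value is unchanged by base change from $\Z$ to $\Q_p$. For $f \in \GL_r(\Z\Gamma)$ one may choose $g \in M_r(\Z\Gamma)$ with $fg = gf = 1$ and apply the ring homomorphism $\Z\Gamma \to \Z[\Gamma^{(n)}]$ to see that $f^{(n)}$ is a $\Z$-linear automorphism of $L$; its determinant then lies in $\Z^\times = \{\pm 1\}$. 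Since $\log_p$ annihilates the roots of unity $\pm 1$, every factor in the limit vanishes and $\log_p\det_\Gamma f = 0$.

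The substantive inputs are Theorems \ref{t3.1} and \ref{t3.2}, which the Corollary black-boxes; granting these, the only real content is the observation that an \emph{invertible} integral matrix has finite-level determinant $\pm 1$, and I expect the reduction of invertibility over $\Z\Gamma$ to invertibility over each $\Z[\Gamma^{(n)}]$ to be formal. The step I expect to need the most care is the first clause: an element of $1 + pM_r(\Z\Gamma)$ is a unit of $\widehat{\Z\Gamma}$ but not necessarily of $\Z\Gamma$, so a priori $\det_{\Q_p}(f^{(n)})$ is only an integer congruent to $1$ modulo $p$ rather than $\pm 1$. The vanishing of $\log_p$ genuinely uses that $f^{(n)}$ is a lattice \emph{automorphism} and not merely an endomorphism, so I would organize the proof around the single assertion $f^{(n)} \in \GL_{r(\Gamma:\Gamma_n)}(\Z)$ and verify that this holds in the situations at hand, after which Theorem \ref{t3.2} finishes the argument.
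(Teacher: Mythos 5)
Your argument is essentially the paper's: specialize Theorem \ref{t3.2} to integral $f$ and observe that the finite-level determinants are $\pm 1$. For the second clause your reduction is complete and matches the paper exactly: an inverse $g$ of $f$ over $\Z\Gamma$ reduces to an inverse of $f^{(n)}$ over $\Z[\Gamma^{(n)}]$, so $f^{(n)}$ is an automorphism of the lattice $\Z[\Gamma^{(n)}]^r$, $\ddet_{\Q_p}(f^{(n)}) = \pm 1$, and every term of the renormalized limit vanishes. The caveat you raise about the first clause is not a defect of your write-up but a genuine imprecision in the statement itself: for $f \in 1 + pM_r(\Z\Gamma)$ the reduction $f^{(n)}$ is in general only an endomorphism of the lattice, its determinant is an integer congruent to $1$ modulo $p$, and $\log_p$ of such an integer need not vanish --- already $\Gamma = \Z$, $r=1$, $f = 1+p$, $\Gamma_n = n\Z$ gives $\ddet_{\Q_p}(f^{(n)}) = (1+p)^n$ and hence $\log_p \ddet_{\Gamma} f = \log_p(1+p) \neq 0$. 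So the first clause must be read with the additional hypothesis $f \in \GL_r(\Z\Gamma)$, its only purpose being to dispense with the assumption that $\Wh^{\F_p}(\Gamma)$ is torsion (which Theorem \ref{t3.2} does not need on $1 + pM_r(\widehat{\Z\Gamma})$); under that reading your single assertion $f^{(n)} \in \GL_{r(\Gamma:\Gamma_n)}(\Z)$ closes both cases, and the paper's own one-line proof (``$f^{(n)}$ respects the $\Z$-lattice and hence $\ddet_{\Q_p}(f^{(n)}) = \pm 1$'') makes exactly the leap you flagged.
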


\begin{proof}
This follows from Theorem \ref{t3.2} because $f^{(n)}$ respects the $\Z$-lattice $\Z [\Gamma^{(n)}]^r$ in $\Q_p [\Gamma^{(n)}]^r$ and hence $\det_{\Q_p} (f^{(n)}) = \pm 1$.
\end{proof}

For finitely generated abelian groups $\Gamma$, Theorem \ref{t3.2} can be generalized to all $f \in \GL_r (c_o (\Gamma))$ because in this case there is the usual determinant
\[
\ddet_{c_0 (\Gamma)} : \GL_r (c_0 (\Gamma)) \longrightarrow c_0 (\Gamma)^{\times} \; .
\]
We explain this in the case where $\Gamma = \Z^N$, see also \cite[\S\,4.3]{B} for a related approach. By a classical result on Tate algebras, see e.g. \cite[Example 2.3]{D}, we have a direct product decomposition:
\begin{equation}
\label{eq:15}
c_0 (\Gamma)^{\times} = p^{\Z} \mu_{p-1} \Gamma U^1_{\Gamma} \quad \text{for} \; \Gamma = \Z^N \; .
\end{equation}
Here $\mu_{p-1} \subset \Z^{\times}_p$ is the group of $p-1$th roots of unity in $\Q^{\times}_p$ and $U^1_{\Gamma} = 1 + p \widehat{\Z\Gamma}$ is the subgroup of $1$-units in $c_0 (\Gamma)$. For $f \in \GL_r (c_0 (\Gamma))$, according to \eqref{eq:15} we may write
\begin{equation}
 \label{eq:16}
 \ddet_{c_0 (\Gamma)} (f) = p^{\nu} \zeta \gamma g \quad \text{in} \; c_0 (\Gamma)^{\times} \; \text{with} \; g \in U^1_{\Gamma} \; \text{etc.}
\end{equation}
We define
\begin{equation}
\label{eq:17}
\log_p \ddet_{\Gamma}f := \log_p \ddet_{\Gamma} g \quad \text{as in \eqref{eq:13}.}
\end{equation}
In this way we obtain a homomorphism
\[
\log_p \ddet_{\Gamma} : K_T (c_0 (\Gamma)) \longrightarrow \Q_p \; ,
\]
which extends the map in Theorem \ref{t3.1}.
This follows from the uniqueness assertion in Theorem \ref{t3.1} using the formula
\begin{equation}
\label{eq:18}
\tr \log f = \log \ddet_{c_0 (\Gamma)} f \quad \text{in} \; c_0 (\Gamma)
\end{equation}
for $f \in 1 + p M_r (\widehat{\Z\Gamma})$. Namely, for such $f$ we have $g = \det_{c_0 (\Gamma)} (f)$ and hence
\begin{align*}
\log_p \ddet_{\Gamma} f & \overset{\eqref{eq:17}}{:=} \log_p \ddet_{\Gamma} (\ddet_{c_0 (\Gamma)} f) \\
& = \tr_{\Gamma} \log \ddet_{c_0 (\Gamma)} f \\
& \overset{\eqref{eq:18}}{=} \tr_{\Gamma} \tr \log f \\
& = \tr_{\Gamma} \log f \overset{\eqref{eq:13}}{=:} \log_p \ddet_{\Gamma} f \; .
\end{align*}
One way to prove equation \eqref{eq:18} is by embedding the Tate algebra
\[
c_0 (\Gamma) \cong \Q_p \langle t^{\pm 1}_1 , \ldots , t^{\pm 1}_N \rangle \; ,
\]
into its quotient field and applying \cite[Appendix C, Lemma 4.1]{H}.

\begin{cor}
\label{t3.4}
For $\Gamma = \Z^N$ the limit formula in Theorem \ref{t3.2} extends to all $f \in \GL_r (c_0 (\Gamma))$.
\end{cor}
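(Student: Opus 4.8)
The plan is to reduce the statement for an arbitrary $f \in \GL_r(c_0(\Gamma))$ to the scalar case $r = 1$ and then to verify the scalar case on each factor of the direct product decomposition \eqref{eq:15}. This is available precisely because $\Gamma = \Z^N$ is abelian, so that $c_0(\Gamma)$ is commutative and carries the ordinary determinant $\ddet_{c_0(\Gamma)} : \GL_r(c_0(\Gamma)) \to c_0(\Gamma)^{\times}$ already used in \eqref{eq:16}.

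First I would perform the reduction to $r = 1$, applied to $a := \ddet_{c_0(\Gamma)}(f) \in c_0(\Gamma)^{\times}$. On the left hand side, definition \eqref{eq:17} shows that $\log_p \ddet_{\Gamma} f$ depends only on $a$ and coincides with $\log_p \ddet_{\Gamma}(a)$ read in $\GL_1(c_0(\Gamma)) = c_0(\Gamma)^{\times}$. On the right hand side, since $c_0(\Gamma) \to c_0(\Gamma^{(n)}) = \Q_p[\Gamma^{(n)}]$ is a homomorphism of commutative rings, we have $\ddet_{\Q_p[\Gamma^{(n)}]}(f^{(n)}) = a^{(n)}$; and viewing $f^{(n)}$ as a $\Q_p$-linear automorphism, the transitivity of determinants for the tower $\Q_p \to \Q_p[\Gamma^{(n)}] \to \Q_p[\Gamma^{(n)}]^r$ gives
\[
\ddet_{\Q_p}(f^{(n)}) = \ddet_{\Q_p}(a^{(n)}) \; ,
\]
where on the right $a^{(n)}$ is viewed as the $\Q_p$-linear automorphism (left multiplication) of $\Q_p[\Gamma^{(n)}]$ in the sense of Theorem \ref{t3.2} with $r = 1$. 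Hence both sides of the asserted formula for $f$ agree with the respective sides for $a$, and it suffices to treat $r = 1$.

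For the scalar case I would exploit \eqref{eq:15}. The two functions
\[
L(a) := \log_p \ddet_{\Gamma}(a) \quad \text{and} \quad R(a) := \lim_{n \to \infty} (\Gamma : \Gamma_n)^{-1} \log_p \ddet_{\Q_p}(a^{(n)})
\]
are homomorphisms $c_0(\Gamma)^{\times} \to \Q_p$: the former by construction, the latter wherever the limit exists, since $a \mapsto a^{(n)}$, $\ddet_{\Q_p}$ and $\log_p$ are all homomorphisms. It is therefore enough to check $L = R$ on each factor of $c_0(\Gamma)^{\times} = p^{\Z} \mu_{p-1} \Gamma U^1_{\Gamma}$, which simultaneously proves existence of the limit $R(a)$ for every $a$. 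On $U^1_{\Gamma} = 1 + p \widehat{\Z\Gamma}$ the equality is precisely Theorem \ref{t3.2} for $r = 1$. On $p^{\Z}$ both sides vanish, since $L(p^{\nu}) = 0$ by definition and $(p^{\nu})^{(n)}$ is the scalar $p^{\nu}$ on the $(\Gamma : \Gamma_n)$-dimensional space, whence $\ddet_{\Q_p}((p^{\nu})^{(n)}) = p^{\nu (\Gamma : \Gamma_n)}$ is killed by $\log_p$. On $\mu_{p-1}$ both sides vanish, since $\ddet_{\Q_p}(\zeta^{(n)}) = \zeta^{(\Gamma : \Gamma_n)}$ is a root of unity. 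On $\Gamma$ both sides vanish, since $\gamma^{(n)}$ permutes the basis $\Gamma^{(n)}$ of $\Q_p[\Gamma^{(n)}]$, so that $\ddet_{\Q_p}(\gamma^{(n)}) = \pm 1$. As these four subgroups generate $c_0(\Gamma)^{\times}$, the identity $L = R$ follows in general.

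The hard part will be the reduction identity $\ddet_{\Q_p}(f^{(n)}) = \ddet_{\Q_p}(a^{(n)})$, i.e.\ the compatibility of the $\Q_p$-linear determinant with the $\Q_p[\Gamma^{(n)}]$-linear determinant through the regular representation. I expect to justify it by noting that $\Q_p[\Gamma^{(n)}]$, being the group algebra of a finite abelian group over a field of characteristic $0$, is a finite product of field extensions of $\Q_p$; over such a product $\GL_r$ is generated by elementary and diagonal matrices, and the identity then follows on generators from the multiplicativity of the determinants involved. Everything else is bookkeeping with the decomposition \eqref{eq:15} together with the appeal to Theorem \ref{t3.2}.
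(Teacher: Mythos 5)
Your proposal is correct and follows essentially the same route as the paper: reduce to the scalar determinant $\ddet_{c_0(\Gamma)}(f)$ via transitivity of determinants, use the decomposition \eqref{eq:15}/\eqref{eq:16}, observe that the factors $p^{\Z}$, $\mu_{p-1}$ and $\Gamma$ contribute zero to both sides after applying $\log_p$, and invoke Theorem \ref{t3.2} for the remaining $1$-unit $g$. The only cosmetic difference is that the paper justifies $\ddet_{\Q_p}(f^{(n)}) = \ddet_{\Q_p}(\varepsilon^{(n)})$ by citing Bourbaki's composite-determinant lemma, whereas you sketch it via generators of $\GL_r$ over a product of fields; both are fine.
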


\begin{proof}
Using \cite[\S\,9, N$^o$ 4, Lemma 1]{Bourbaki} we find
\[
\ddet_{\Q_p} (f^{(n)}) = \ddet_{\Q_p} (\varepsilon^{(n)}) \quad \text{where} \; \varepsilon^{(n)} = \ddet_{c_0 (\Gamma^{(n)})} (f^{(n)}) \; .
\]
Equation \eqref{eq:16} gives
\[
\varepsilon^{(n)} = (\ddet_{c_0 (\Gamma)} (f))^{(n)} = (p^{\nu} \zeta \gamma)^{(n)} g^{(n)} \; .
\]
Since $\log_p$ vanishes on roots of unity and powers of $p$ we obtain
\[
\log_p \ddet_{\Q_p} (f^{(n)}) = \log_p \ddet_{\Q_p} (g^{(n)}) \; .
\]
On the other hand, by definition \eqref{eq:17} we have
\[
\log_p \ddet_{\Gamma} f = \log_p \ddet_{\Gamma} g \; .
\]
Now the Corollary follows from Theorem \ref{t3.2} applied to $g$. 
\end{proof}

\begin{remark}
\label{t3.5}
By Corollary \ref{t3.3} we know that $\log_p \det_{\Gamma} f = 0$ for $f$ in $\GL_r (\Z\Gamma)$ where $\Gamma = \Z^N$. This can also be seen from definition \eqref{eq:17} since $\det_{c_0 (\Gamma)} (f) \in (\Z\Gamma)^{\times} = \mu_2 \Gamma$, so that $g = 1$. Note here that $\Z\Gamma \cong \Z [t^{\pm 1}_1 , \ldots , t^{\pm 1}_N]$.
\end{remark}

We end this section with a reformulation of the existence of limits of renormalized $p$-adic logarithms. Consider the direct product decompositions
\[
\Q^{\times}_p = \mu_{p-1} \times (1 + p \Z_p) \times p^{\Z} \quad \text{for} \; p \neq 2
\]
and
\[
\Q^{\times}_2 = \mu_2 \times (1 + 4\Z_2) \times 2^{\Z} \quad \text{for} \; p = 2 \; .
\]
For $\chi \in \Q^{\times}_p$ let $\chi^{(1)}$ be the component in $1 + p \Z_p$ for $p \neq 2$ resp. in $1 + 4 \Z_2$ for $p = 2$. 

\begin{prop}
\label{t3.6}
Consider numbers $\chi_n \in \Q^{\times}_p , a_n \in \Z \setminus 0$ and $h_p \in \Q_p$. Then the following assertions are equivalent:
\begin{compactitem}
\item[1)] $\lim_{n\to \infty} \frac{1}{a_n} \log_p \chi_n = h_p$ in $\Q_p$.
\item[2)] For some (equivalently, all) integer(s) $k \in \Z \setminus 0$ with $|kh_p| < 1$ if $p \neq 2$ resp. $|kh_2| < 1/2$ if $p = 2$, there is a sequence $(u_n)$ in $\Q^{\times}_p$ with $u_n \to 1$ such that
\[
\chi^{(1) k}_n = \exp_p (kh_p)^{a_n} u^{ka_n}_n \quad \text{for large enough} \; n \; .
\]
\end{compactitem}
In particular
\[
\chi^{(1)k}_n \sim \exp_p (kh_p)^{a_n} \quad \text{in} \; \Q_p \; \text{as} \; n \to \infty \; .
\]
\end{prop}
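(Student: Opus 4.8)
The plan is to prove the equivalence of 1) and 2) by working separately in each factor of the decomposition $\Q^{\times}_p = \mu_{p-1} \times (1 + p \Z_p) \times p^{\Z}$ (resp.\ the $p=2$ version), exploiting that $\log_p$ kills the $\mu_{p-1}$ and $p^{\Z}$ factors. Since $\log_p(\chi_n) = \log_p(\chi^{(1)}_n)$ by the normalization $\log_p(p)=0$, assertion 1) is a statement purely about the $1$-unit components $\chi^{(1)}_n \in 1 + p\Z_p$. So I would first reduce to the case $\chi_n = \chi^{(1)}_n \in 1 + p\Z_p$ (resp.\ $1 + 4\Z_2$), and observe that on this subgroup $\log_p$ and $\exp_p$ are mutually inverse isometric isomorphisms onto $p\Z_p$ (resp.\ $4\Z_2$), under which multiplication corresponds to addition.

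Second, I would set up the one key translation. Under $\log_p$, the statement $\chi^{(1)k}_n = \exp_p(kh_p)^{a_n} u^{ka_n}_n$ in 2) becomes, after taking logs,
\begin{equation*}
k \log_p \chi^{(1)}_n = a_n k h_p + k a_n \log_p u_n \; .
\end{equation*}
Dividing by $k a_n$ (legitimate in $\Q_p$, with $k, a_n \neq 0$) this is exactly
\begin{equation*}
\frac{1}{a_n} \log_p \chi_n - h_p = \log_p u_n \; .
\end{equation*}
Thus defining $u_n$ by $\log_p u_n := \frac{1}{a_n}\log_p\chi_n - h_p$, i.e.\ $u_n = \exp_p\!\bigl(\frac{1}{a_n}\log_p\chi_n - h_p\bigr)$ whenever the right side lies in $p\Z_p$, the condition $u_n \to 1$ in $\Q^{\times}_p$ is equivalent (by continuity of $\log_p$ near $1$ and of $\exp_p$ near $0$) to $\frac{1}{a_n}\log_p\chi_n - h_p \to 0$, which is precisely 1). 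This gives both implications at once.

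The one genuine obstacle is domain-of-definition bookkeeping for $\exp_p$ and the role of the auxiliary integer $k$: $\exp_p(kh_p)$ is defined only when $|kh_p| < p^{-1}$ (resp.\ $< 2^{-1}$), and $\exp_p$ of the error term is defined only once $n$ is large enough that $\frac{1}{a_n}\log_p\chi_n - h_p$ has small absolute value. The purpose of passing to the $k$-th power and of the hypothesis $|kh_p|<1$ (resp.\ $<1/2$) is exactly to guarantee $kh_p$ lies in the domain of $\exp_p$; and the qualifier ``for large enough $n$'' absorbs the convergence issue for the error term. So I would carefully note that 1) forces the error term into $p\Z_p$ for large $n$, making $u_n$ well defined and forcing $u_n^k$ (hence, since $\log_p$ is injective on $1$-units, $u_n$ itself) into the right range; conversely 2) for a single admissible $k$ immediately yields $\log_p u_n \to 0$ and hence 1) by the displayed identity. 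The final ``in particular'' clause — that $\chi^{(1)k}_n \sim \exp_p(kh_p)^{a_n}$, i.e.\ the quotient tends to $1$ — is then just the observation that the quotient equals $u_n^{k a_n}$ and that $u_n \to 1$ forces $u_n^{k a_n} \to 1$, using that $|k a_n \log_p u_n| \to 0$ since $\log_p u_n \to 0$ dominates the (bounded-below-in-valuation but possibly growing) integer exponent — a point I would check by passing to logarithms rather than bounding $u_n^{k a_n}$ directly.
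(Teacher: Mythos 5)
Your proposal is correct and follows essentially the same route as the paper: you set $u_n=\exp_p\bigl(\tfrac{1}{a_n}\log_p\chi_n-h_p\bigr)$, which is exactly the paper's choice $u_n=\exp_p(\varepsilon_n)$, and the translation between 1) and 2) via $\log_p$/$\exp_p$ on $1$-units, together with the domain bookkeeping that explains the role of $k$, matches the paper's argument. Your extra care with the ``in particular'' clause (using $|ka_n|_p\le 1$ to get $u_n^{ka_n}\to 1$) is a correct elaboration of a point the paper leaves implicit.
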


\begin{proof}
1) $\Rightarrow$ 2) Writing
\[
\frac{1}{a_n} \log_p \chi_n = h_p + \varepsilon_n \quad \text{with} \; \varepsilon_n \to 0
\]
we get
\[
\log_p \chi^{(1) k}_n = k \log_p \chi^{(1)}_n = k \log_p \chi_n = a_n k h_p + a_n k \varepsilon_n \; .
\]
By assumption $\chi^{(1)k}_n$ lies in $1 + p \Z_p$ for $p \neq 2$ and in $1 + 4 \Z_2$ for $p = 2$. Hence we have
\[
\exp_p \log_p \chi^{(1)k}_n = \chi^{(1)k}_n \; .
\]
On the other hand $|kh_p| < ( \sqrt[p-1]{p})^{-1}$ by assumption and $|\varepsilon_n| < (\sqrt[p-1]{p})^{-1}$ for $n$ large enough. Hence we find
\[
\exp_p (a_n kh_p + a_n k\varepsilon_n) = \exp_p (kh_p)^{a_n} \exp_p (\varepsilon_n)^{ka_n} \; .
\]
Setting $u_n := \exp_p (\varepsilon_n)$ we obtain the formula for $\chi^{(1) k}_n$ claimed in 2).\\
2) $\Rightarrow$ 1) follows similarly by applying $\log_p$ in 2).
\end{proof}
\section{$p$-adic $R$-torsion} \label{sec:4}
In this section we prove that a renormalized $p$-adic limit of logarithms of multiplicative Euler characteristics exists and equals a suitably defined $p$-adic version of $R$-torsion.

Consider a discrete group $\Gamma$ and a (left) $\Z\Gamma$-module $M$ of type $FL$. Choose a resolution
\begin{equation}
\label{eq:18a}
0 \longrightarrow F_d \xrightarrow{\partial} \ldots \xrightarrow{\partial} F_0 \xrightarrow{\pi} M \longrightarrow 0
\end{equation}
of $M$ by finitely generated free $\Z \Gamma$-modules $F_{\nu}$ for $0 \le \nu \le d$. For each $\nu$ we also fix a $\Z\Gamma$-basis $\eb_{\nu}$ of $F_{\nu}$. Assume that the complex
\begin{equation}
\label{eq:19}
0 \longrightarrow \widehat{\Z\Gamma} \otimes_{\Z\Gamma} F_d \xrightarrow{\hat{\partial}} \ldots \xrightarrow{\hat{\partial}} \widehat{\Z\Gamma} \otimes_{\Z\Gamma} F_0 \longrightarrow 0
\end{equation}
where $\hat{\partial} := \id \otimes \partial$ is acyclic. We write $\tau (F_{\hullet} , \eb_{\hullet})$ for the torsion in $K_T (\widehat{\Z\Gamma})$ of this complex equipped with the induced bases $1 \otimes \eb_{\nu}$, cf. \cite[\S\,3,4]{Mi}. Consider an exact sequence of $\Z\Gamma$-modules
\[
0 \longrightarrow M' \longrightarrow M \longrightarrow M'' \longrightarrow 0
\]
of $\Z\Gamma$-modules. Assume that $(F'_{\hullet} \to M' , \eb'_{\hullet})$ and $(F''_{\hullet} \to M'' , \eb''_{\hullet})$ are based resolutions as above such that the complexes $\widehat{\Z\Gamma} \otimes_{\Z\Gamma} F'_{\hullet}$ and $\widehat{\Z\Gamma} \otimes_{\Z\Gamma} F''_{\hullet}$ are exact. Equipping an $FL$-resolution $F_{\hullet} \to M$ as in Proposition \ref{t1.8} for $R = \Z\Gamma$ with bases $\eb_{\nu} = \eb'_{\nu} \cup \eb''_{\nu}$ on $F_{\nu} = F'_{\nu} \oplus F''_{\nu}$ we have
\begin{equation}
\label{eq:19a}
\tau (F_{\hullet} , \eb_{\hullet}) = \tau (F'_{\hullet} , \eb'_{\hullet}) \tau (F''_{\hullet} , \eb''_{\hullet}) \quad \text{in} \; K_T (\widehat{\Z\Gamma}) \; .
\end{equation}
This is a special case of \cite[Theorem 3.1, \S\,4]{Mi}. According to \cite[\S\,15,16]{C} there is the following formula for the torsion: Since \eqref{eq:19} is an acyclic complex of free and hence projective $\widehat{\Z\Gamma}$-modules, it admits a chain contraction i.e. a degree-one $\widehat{\Z\Gamma}$-module homomorphism
\[
\delta : \widehat{\Z\Gamma} \otimes_{\Z\Gamma} F_{\hullet} \longrightarrow \widehat{\Z\Gamma} \otimes_{\Z\Gamma} F_{\hullet} 
\]
such that $\delta \hpartial + \hpartial \delta = \id$. Set
\[
F_{\odd} = F_1 \oplus F_3 \oplus \ldots \quad \text{and} \quad F_{\even} = F_0 \oplus F_2 \oplus \ldots
\]
and
\[
\eb_{\odd} = \eb_1 \cup \eb_3 \cup \ldots \quad \text{and} \quad \eb_{\even} = \eb_0 \cup \eb_2 \cup \ldots
\]
The map
\[
(\hpartial + \delta)_{\odd} := (\hpartial + \delta) \, |_{\widehat{\Z\Gamma} \otimes_{\Z\Gamma} F_{\odd}} : \widehat{\Z\Gamma} \otimes_{\Z\Gamma} F_{\odd} \longrightarrow \widehat{\Z\Gamma} \otimes_{\Z\Gamma} F_{\even}
\]
is an isomorphism of finitely generated free $\widehat{\Z\Gamma}$-modules. Using the chosen bases $1 \otimes \eb_{\odd}$ and $1 \otimes \eb_{\even}$ together with some ordering we may view $(\hpartial + \delta)_{\odd}$ as an automorphism $(\hpartial + \delta)^{\eb_{\hullet}}_{\odd}$ of $(\widehat{\Z\Gamma})^r$ where
\begin{equation}
\label{eq:20}
r = \sum_{i \, \odd} \rank_{\Z\Gamma} F_i = \sum_{i \, \even} \rank_{\Z\Gamma} F_i \; .
\end{equation}
We identify $\Aut_{\widehat{\Z\Gamma}} (\widehat{\Z\Gamma})^r$ with $\GL_r (\widehat{\Z\Gamma})$ by sending $f$ to the matrix $\langle f \rangle = (a_{ij})$ defined by $f (e_i) = \sum_j a_{ij} e_j$ where $e_1 , \ldots , e_r$ is the standard basis of $( \widehat{\Z\Gamma})^r$. Let
\[
\pi : \GL_r (\widehat{\Z\Gamma}) \longrightarrow K_T (\widehat{\Z\Gamma})
\]
be the canonical homomorphism. Then we have
\begin{equation}
\label{eq:21}
\tau (F_{\hullet} , \eb_{\hullet}) = \pi \langle (\hpartial + \delta)^{\eb_{\hullet}}_{\odd} \rangle \; .
\end{equation}
This is independent of the chosen orderings of the bases. For different bases $\eb'_{\nu}$ of $F_{\nu}$ it follows e.g. from \eqref{eq:21} that
\begin{equation}
\label{eq:22}
\tau (F_{\hullet} , \eb'_{\hullet}) = \tau (F_{\hullet} , \eb_{\hullet}) \pi (S) \quad \text{in} \; K_T (\widehat{\Z\Gamma})
\end{equation}
for some matrix $S \in \GL_r (\Z\Gamma)$.

Now assume that $\Gamma$ is countable and residually finite and that $\Wh^{\F_p} (\Gamma)$ is a torsion group. Then we can apply the homomorphism
\[
\log_p \ddet_{\Gamma} : K_T (\widehat{\Z\Gamma}) \longrightarrow \Q_p
\]
of Theorem \ref{t3.1} to $\tau (F_{\hullet} , \eb_{\hullet})$. We call
\begin{equation}
\label{eq:23}
\tau^{\Gamma}_p (M) := \log_p \ddet_{\Gamma} (\tau (F_{\hullet}, \eb_{\hullet})) \in \Q_p
\end{equation}
the $p$-adic $R$-torsion of $M$. Because of formula \eqref{eq:22} and Corollary \ref{t3.3} it is independent of the choice of bases $\eb_{\hullet}$. In the situation of \eqref{eq:19a} we have
\begin{equation}
\label{eq:23a} 
\tau^{\Gamma}_p (M) = \tau^{\Gamma}_p (M') + \tau^{\Gamma}_p (M'') \; .
\end{equation}

\begin{theorem}
\label{t4.1}
Let $\Gamma$ be countable and residually finite with $\Wh^{\F_p} (\Gamma)$ torsion. Let $M$ be a $\Z\Gamma$-module of type $FL$ with $pM = M$ and $\varprojlim_i M_{p^i} = 0$. Then the following assertions hold:\\
a) The $p$-adic Reidemeister torsion $\tau^{\Gamma}_p (M) \in \Q_p$ is defined and independent of the chosen based resolution \eqref{eq:18a} of $M$.\\
b) Choose a sequence $\Gamma_n \to e$ of normal subgroups $\Gamma_n$ of finite index in $\Gamma$ and consider the multiplicative Euler characteristics $\chi (\Gamma_n , M) \in \Q^{\times}$ from \eqref{eq:9a}. Then we have
\begin{equation}
\label{eq:24}
\tau^{\Gamma}_p (M) = \lim_{n \to \infty} (\Gamma : \Gamma_n)^{-1} \log_p \chi (\Gamma_n , M) \quad \text{in} \; \Q_p \; .
\end{equation}
\end{theorem}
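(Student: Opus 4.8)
The plan is to treat parts a) and b) in turn, deriving b) from the chain-contraction formula \eqref{eq:21} together with Theorem \ref{t3.2}.

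For a), I would first check that the invariant is defined at all. Since $pM = M$, the module $M$ is $p$-adically expansive of exponent $n_0 = 0$, so Theorem \ref{t2.1} applied with $\Delta = \Gamma$ (equivalently Theorem \ref{t1.4} c) for $R = \Z\Gamma$) shows that the complex \eqref{eq:19} is acyclic; hence $\tau(F_{\hullet}, \eb_{\hullet}) \in K_T(\widehat{\Z\Gamma})$ is defined and, $\Wh^{\F_p}(\Gamma)$ being torsion, the homomorphism $\log_p \ddet_{\Gamma}$ of Theorem \ref{t3.1} can be applied to it. Independence of the choice of bases is exactly \eqref{eq:22} combined with Corollary \ref{t3.3}. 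For independence of the resolution I would invoke the standard invariance of Whitehead torsion: two based free $\Z\Gamma$-resolutions of $M$ are joined by a $\Z\Gamma$-chain homotopy equivalence, whose algebraic mapping cone is a based acyclic complex of free $\Z\Gamma$-modules. By the additivity formulas of \cite{Mi} its torsion, which lies in the image of $K_T(\Z\Gamma) = \Wh(\Gamma)$ in $K_T(\widehat{\Z\Gamma})$, measures the difference of the two torsions. Since $\log_p \ddet_{\Gamma}$ vanishes on the image of $K_T(\Z\Gamma)$ by Corollary \ref{t3.3}, the number $\tau^{\Gamma}_p(M)$ does not depend on any of the choices.

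For b), formula \eqref{eq:21} gives $\tau^{\Gamma}_p(M) = \log_p \ddet_{\Gamma} \langle (\hpartial + \delta)_{\odd} \rangle$ with $(\hpartial + \delta)_{\odd} \in \GL_r(\widehat{\Z\Gamma})$. As $\Wh^{\F_p}(\Gamma)$ is torsion, Theorem \ref{t3.2} applies to this element and expresses $\tau^{\Gamma}_p(M)$ as $\lim_n (\Gamma : \Gamma_n)^{-1} \log_p \ddet_{\Q_p}(((\hpartial + \delta)_{\odd})^{(n)})$. It therefore suffices to prove the finite-level identity $\log_p \ddet_{\Q_p}(((\hpartial + \delta)_{\odd})^{(n)}) = \log_p \chi(\Gamma_n, M)$ for each $n$. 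To establish this I would reduce \eqref{eq:19} along the ring homomorphism $\widehat{\Z\Gamma} \to \Q_p[\Gamma^{(n)}]$. Since $F_{\hullet}$ is free over $\Z\Gamma$, hence over $\Z\Gamma_n$, one has $H_i(\Gamma_n, M) = H_i(\Z[\Gamma^{(n)}] \otimes_{\Z\Gamma} F_{\hullet})$, a finite group by Theorem \ref{t2.1}. Thus $C^{(n)}_{\hullet} := \Z[\Gamma^{(n)}] \otimes_{\Z\Gamma} F_{\hullet}$ is a bounded complex of based finitely generated free $\Z$-modules whose base change $C^{(n)}_{\hullet} \otimes \Q_p = \Q_p[\Gamma^{(n)}] \otimes_{\Z\Gamma} F_{\hullet}$ is acyclic. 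The reduction of $\delta$ is a $\Q_p$-chain contraction of $C^{(n)}_{\hullet} \otimes \Q_p$, and $((\hpartial + \delta)_{\odd})^{(n)} = (\partial^{(n)} + \delta^{(n)})_{\odd}$, so $\ddet_{\Q_p}(((\hpartial + \delta)_{\odd})^{(n)})$ is the Reidemeister torsion of the based acyclic $\Q_p$-complex $C^{(n)}_{\hullet} \otimes \Q_p$ with respect to the integral bases induced by $\eb_{\hullet}$.

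A classical computation of Reidemeister torsion over a field with respect to an integral basis, independent of the contraction used, then yields $\ddet_{\Q_p}((\partial^{(n)} + \delta^{(n)})_{\odd}) = \pm \prod_i |H_i(\Gamma_n, M)|^{(-1)^i} = \pm \chi(\Gamma_n, M)$. Applying $\log_p$, which kills the sign, gives the required finite-level identity, and feeding it into Theorem \ref{t3.2} proves \eqref{eq:24}. The routine ingredients are the base-change identification of the homology and the fact that reducing $\delta$ still yields a contraction; the step I expect to be the main obstacle is the classical identity expressing the $\Q_p$-Reidemeister torsion of an integral acyclic complex as the alternating product of the orders of its integral homology groups. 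It is essential here that this records the \emph{full} order $|H_i(\Gamma_n, M)|$ and not merely its $p$-part, so that $\ddet_{\Q_p}(((\hpartial + \delta)_{\odd})^{(n)})$ and $\chi(\Gamma_n, M)$ differ only by a sign in $\Q^{\times}_p$ and $\log_p$ extracts the same prime-to-$p$ unit part from each.
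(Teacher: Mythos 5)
Your proposal is correct and follows essentially the same route as the paper: acyclicity of \eqref{eq:19} via Theorem \ref{t2.1}, basis-independence via \eqref{eq:22} and Corollary \ref{t3.3}, then Theorem \ref{t3.2} applied to $(\hpartial+\delta)_{\odd}$ combined with the finite-level identification of $\ddet_{\Q_p}(f^{(n)})$ with the torsion of $\Q_p\otimes_{\Z\Gamma_n}F_{\hullet}$ and the classical fact (which the paper proves as Proposition \ref{t4.2}) that this torsion equals $\prod_i |H_i(\Gamma_n,M)|^{(-1)^i}$ up to sign. The only (harmless) divergence is in a): you prove independence of the resolution directly via the Whitehead torsion of the mapping cone of a chain homotopy equivalence and Corollary \ref{t3.3}, whereas the paper simply deduces it a posteriori from the limit formula in b).
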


\begin{example}
For $d = 1$ i.e. for resolutions of the form
\[
0 \longrightarrow (\Z\Gamma)^s \xrightarrow{\partial} (\Z \Gamma)^r \longrightarrow M \longrightarrow 0
\]
we showed in Proposition \ref{t2.2} that the conditions on $M$ in Theorem \ref{t4.1} are equivalent to $r = s$ and $\partial \in M_r (\Z\Gamma) \cap \GL_r (\widehat{\Z\Gamma})$. In this case we have $\tau^{\Gamma}_p (M) = \log_p \det_{\Gamma} \partial$ and $\chi (\Gamma_n , M) = |\det_{\Q_p} (\partial^{(n)})|$ using \eqref{eq:26} and an argument as in the proof of Proposition \ref{t4.2}. Theorem \ref{t4.1} reduces to the limit formula in Theorem \ref{t3.2}.
\end{example}

\begin{rem}
Given an exact sequence $0 \to M' \to M \to M'' \to 0$ of $\Z\Gamma$-modules, where $M'$ and $M''$ satisfy the conditions of Theorem \ref{t4.1}, the module $M$ satisfies those conditions as well by Proposition \ref{t1.8}. According to \eqref{eq:23a} we therefore have $\tau^{\Gamma}_p (M) = \tau^{\Gamma}_p (M') + \tau^{\Gamma}_p (M'')$. This also follows from equation \eqref{eq:24}.
\end{rem}

\begin{proof}
By Theorem \ref{t2.1} we know that for any resolution \eqref{eq:18a} of $M$ the complex \eqref{eq:19} is acyclic. Hence $\tau^{\Gamma}_p (M)$ is defined. Independence of the resolution will follow from b). Again by Theorem \ref{t2.1}, the homology groups $H_i (\Gamma_n , M)$ are finite for all $i$ and vanish for almost all $i$, so that
\[
\chi (\Gamma_n , M) = \prod_i |H_i (\Gamma_n , M) |^{(-1)^i} \in \Q^{\times}
\]
is well defined. In order to prove formula \eqref{eq:24} we apply Theorem \ref{t3.2} to $f = (\hpartial + \delta)^{\eb_{\hullet}}_{\odd} \in \GL_r (\widehat{\Z\Gamma})$. This gives the equality
\begin{equation}
\label{eq:25}
\tau^{\Gamma}_p (M) = \lim_{n\to \infty} (\Gamma : \Gamma_n)^{-1} \log_p (\ddet_{\Q_p} (f^{(n)})) \; .
\end{equation}
Let $(F^{(n)}_{\hullet} , \partial^{(n)})$ be the complex of free $\Z\Gamma^{(n)}$-modules of finite rank obtained by tensoring the complex $(F_{\hullet} , \partial)$ with $\Z\Gamma^{(n)}$ via the natural map $\Z\Gamma \to \Z \Gamma^{(n)}$. We have
\begin{equation}
\label{eq:26}
F^{(n)}_{\hullet} = \Z \Gamma^{(n)} \otimes_{\Z\Gamma} F_{\hullet} = (\Z \otimes_{\Z\Gamma_n} \Z\Gamma) \otimes_{\Z\Gamma} F_{\hullet} = \Z \otimes_{\Z\Gamma_n} F_{\hullet} \; .
\end{equation}
Here $\Z$  is viewed as a $\Z\Gamma_n$-module via the augmentation and the right $\Z\Gamma$-equivariant isomorphism
\[
\Z \otimes_{\Z\Gamma_n} \Z\Gamma \silo \Z \Gamma^{(n)}
\]
is induced by the map $\Z \times \Z\Gamma \to \Z \Gamma^{(n)}$ which sends $(\nu , \sum n_{\gamma} \gamma)$ to $\nu \sum_{\gamma} n_{\gamma} \overline{\gamma}$. We will use the identifications $F^{(n)}_{\hullet} = \Z \otimes_{\Z\Gamma_n} F_{\hullet} , \partial^{(n)} = \id \otimes \partial$ and view $(F^{(n)}_{\hullet} , \partial^{(n)})$ simply as a complex of finitely generated free $\Z$-modules. Tensoring the acyclic complex \eqref{eq:19} of $\widehat{\Z\Gamma}$-modules via the natural map
\[
\widehat{\Z\Gamma} = \varprojlim_i \Z\Gamma / p^i \Z \Gamma \longrightarrow \varprojlim_i \Z \Gamma^{(n)} / p^i \Z \Gamma^{(n)} = \Z_p \Gamma^{(n)}
\]
with $\Z_p \Gamma^{(n)}$ we obtain the complex of free $\Z_p$-modules
\begin{equation}
\label{eq:27}
\Z_p \Gamma^{(n)} \otimes_{\widehat{\Z\Gamma}} \widehat{\Z\Gamma} \otimes_{\Z\Gamma} F_{\hullet} = \Z_p \otimes_{\Z} \Z \Gamma^{(n)} \otimes_{\Z\Gamma} F_{\hullet} = \Z_p \otimes_{\Z} F^{(n)}_{\hullet} = \Z_p \otimes_{\Z\Gamma_n} F_{\hullet} \; .
\end{equation}
The differential $\hpartial^{(n)} := \id \otimes_{\widehat{\Z\Gamma}} \hpartial$ on the left corresponds to the differential $\id \otimes_{\Z\Gamma_n} \partial$ on $\Z_p \otimes_{\Z\Gamma_n} F_{\hullet}$. The complex \eqref{eq:27} carries the chain contraction $\delta^{(n)} := \id \otimes_{\widehat{\Z\Gamma}} \delta$ and it is therefore acyclic. The isomorphism
\[
(\hpartial + \delta)^{(n)}_{\odd} := \id \otimes_{\widehat{\Z\Gamma}} (\hpartial + \delta)_{\odd}
\]
can be identified with the isomorphism of $\Z_p$-modules
\begin{equation}
\label{eq:28a}
(\id \otimes_{\Z\Gamma_n} \partial + \delta^{(n)})_{\odd} := \id \otimes_{\Z\Gamma_n} \partial + \delta^{(n)} \, |_{\Z_p \otimes_{\Z\Gamma_n} F_{\odd}} : \Z_p \otimes_{\Z\Gamma_n} F_{\odd} \silo \Z_p \otimes_{\Z\Gamma_n} F_{\even} \; .
\end{equation}
Fixing $\Z$-bases of the free $\Z$-modules $\Z \otimes_{\Z\Gamma_n} F_{\hullet} = F^{(n)}_{\hullet}$ we may view \\
$(\id \otimes_{\Z\Gamma_n} \partial + \delta^{(n)})_{\odd}$ as an element of $\GL_{r (\Gamma : \Gamma_n)} (\Z_p)$ with $r$ defined in \eqref{eq:20} above. Its image in $K_T (\Q_p) = K_1 (\Q_p) / \langle \pm 1 \rangle$ is the torsion $\tau (\Q_p \otimes_{\Z\Gamma_n} F_{\hullet})$ of the based acyclic complex $(\Q_p \otimes_{\Z\Gamma_n} F_{\hullet} , \id \otimes_{\Z\Gamma_n} \partial)$. It is independent of the chosen $\Z$-bases since $K_1 (\Z) = \Z^{\times} = \{ \pm 1 \}$. In formula \eqref{eq:25} the matrix $f^{(n)}$ for $f = (\hpartial + \delta)^{\eb}_{\odd}$ is viewed as an element of $\GL_{r (\Gamma : \Gamma_n)} (\Z_p)$. The image of $f^{(n)}$ in $K_T (\Q_p)$ is equal to $\tau (\Q_p \otimes_{\Z\Gamma_n} F_{\hullet})$ by the above considerations. Consider the composition
\[
K_T (\Q_p) \overset{\det_{\Q_p}}{\silo} \Q^{\times}_p / \{ \pm 1 \} \; .
\]
Since $\det_{\Q_p} f^{(n)} = \det_{\Q_p} \tau (\Q_p \otimes_{\Z\Gamma_n} F_{\hullet})$ in \eqref{eq:25}, it suffices to show that
\begin{equation}
\label{eq:28}
\ddet_{\Q_p} \tau (\Q_p \otimes_{\Z\Gamma_n} F_{\hullet}) = \chi (\Gamma_n , M) \quad \text{in} \; \Q^{\times}_p / \{ \pm 1 \} \; .
\end{equation}
By Theorem \ref{t2.1} the homology groups $H_i (\Gamma_n , M) = H_i (\Z \otimes_{\Z\Gamma_n} F_{\hullet})$ are finite. Hence the complex $\Q \otimes_{\Z\Gamma_n} F_{\hullet}$ is acyclic. Its torsion
\[
\tau (\Q \otimes_{\Z\Gamma_n} F_{\hullet}) \in K_T (\Q) \overset{\det_{\Q}}{\silo} \Q^{\times} / \{ \pm 1 \}
\]
with respect to any $\Z$-bases of the groups $\Z \otimes_{\Z \Gamma_n} F_{\hullet}$ is independent of the choice of bases since $K_1 (\Z) = \{ \pm 1 \}$. Since $\tau (\Q \otimes_{\Z\Gamma_n} F_{\hullet})$ is mapped to $\tau (\Q_p \otimes_{\Z\Gamma_n} F_{\hullet})$ under the natural map $K_T (\Q) \to K_T (\Q_p)$, we have
\[
\ddet_{\Q_p} \tau (\Q_p \otimes_{\Z\Gamma_n} F_{\hullet}) = \ddet_{\Q} \tau (\Q \otimes_{\Z\Gamma_n} F_{\hullet}) \quad \text{in} \; \Q^{\times} / \{ \pm 1 \} \; .
\]
Thus \eqref{eq:28} is a consequence of the formula
\[
\ddet_{\Q} \tau (\Q \otimes_{\Z\Gamma_n} F_{\hullet}) = \chi (\Gamma_n , M) \quad \text{in} \; \Q^{\times} / \{ \pm 1 \}
\]
which is a special case of the following proposition applied to $L_{\hullet} = \Z \otimes_{\Z\Gamma_n} F_{\hullet}$ noting that $H_i (L_{\hullet}) = H_i (\Gamma_n , M)$. 
\end{proof}

\begin{prop}
\label{t4.2}
Let $0 \to L_d \xrightarrow{\partial} \ldots \xrightarrow{\partial} L_0 \to 0$ be a complex of finitely generated free $\Z$-modules whose homology groups are finite. The torsion \\
$\tau (\Q \otimes L_{\hullet}) \in K_T (\Q)$ of the acyclic complex $\Q \otimes L_{\hullet}$ equipped with bases coming from $\Z$-bases of the $L_{\nu}$ is independent of the bases and we have
\[
\ddet_{\Q} \tau (\Q \otimes L_{\hullet}) = \prod_i |H^i (L_{\hullet})|^{(-1)^i} \quad \text{in} \; \Q^{\times} / \{ \pm 1 \} \; .
\]
\end{prop}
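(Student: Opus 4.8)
The plan is to compute the torsion of $\Q \otimes L_{\hullet}$ directly from its definition in \eqref{eq:21} as $\ddet_{\Q}$ of an odd/even contraction isomorphism $(\partial + \delta)_{\odd}$, using integral bases adapted to the cycle--boundary filtration. The point will be that, for a suitable chain contraction $\delta$, this isomorphism becomes block diagonal with blocks given by the inclusions of the integral boundary lattices into the integral cycle lattices, whose determinants have absolute values $|H_i|$.

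First I would fix the integral lattices. Put $Z_i = \Ker(\partial : L_i \to L_{i-1})$ and $B_i = \Imm(\partial : L_{i+1} \to L_i)$, so that $B_i \subseteq Z_i$ are finitely generated free abelian groups of equal rank with $B_i$ of finite index $[Z_i : B_i] = |H_i|$, while acyclicity of $\Q \otimes L_{\hullet}$ gives $Z_i^{\Q} = B_i^{\Q}$. The short exact sequence $0 \to Z_i \to L_i \xrightarrow{\partial} B_{i-1} \to 0$ of free $\Z$-modules splits, so choosing a $\Z$-basis $\zeta_i$ of $Z_i$, a $\Z$-basis $\beta_i$ of $B_i$, and integral lifts $\tilde{\beta}_{i-1} \subset L_i$ of $\beta_{i-1}$, the family $\zeta_i \cup \tilde{\beta}_{i-1}$ is an \emph{integral} basis of $L_i$. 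Since any two $\Z$-bases of a free $\Z$-module differ by an element of $\GL(\Z)$, of determinant $\pm 1$, the torsion computed with $\zeta_i \cup \tilde{\beta}_{i-1}$ agrees in $K_T(\Q)$ with the one computed from the given bases; this is the same $K_1(\Z) = \{\pm 1\}$ mechanism already used in the proof of Theorem \ref{t4.1}, and it also yields the asserted independence of bases. Finally I record that, writing the inclusion $B_i \hookrightarrow Z_i$ in the bases $\beta_i$ and $\zeta_i$ as an integer matrix $A_i$, one has $|\ddet A_i| = [Z_i : B_i] = |H_i|$.

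Next I would exhibit an explicit chain contraction in these bases. In the basis $\zeta_i \cup \tilde{\beta}_{i-1}$ the differential acts by $\partial \zeta_i = 0$ and $\partial \tilde{\beta}_{i-1} = \beta_{i-1} = A_{i-1}\zeta_{i-1}$. I then set $\delta(\zeta_i) = A_i^{-1} \tilde{\beta}_i$ and $\delta(\tilde{\beta}_{i-1}) = 0$; a one-line check gives $\partial \delta + \delta \partial = \id$. Computing $(\partial + \delta)_{\odd}$ in the chosen bases, each odd-degree generator $\zeta_i$ is sent to $A_i^{-1}\tilde{\beta}_i$ in even degree, and each odd-degree generator $\tilde{\beta}_{i-1}$ is sent to $A_{i-1}\zeta_{i-1}$ in even degree. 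Thus, after a reordering of the basis blocks (a permutation, hence harmless in $\Q^{\times}/\{\pm 1\}$), the isomorphism $(\partial + \delta)_{\odd}$ is block diagonal with one block $A_j$ for each even $j$ and one block $A_j^{-1}$ for each odd $j$.

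Putting this together, $\ddet_{\Q}(\partial + \delta)_{\odd} = \pm \prod_j (\ddet A_j)^{(-1)^j}$, whose class in $\Q^{\times}/\{\pm 1\}$ is $\prod_j |H_j|^{(-1)^j}$. Since by \eqref{eq:21} this determinant represents $\ddet_{\Q}\tau(\Q \otimes L_{\hullet})$, the proposition follows. The only real obstacle is bookkeeping: one must use the same integral basis $\beta_{i-1}$ of $B_{i-1}$ both inside $L_{i-1}$ and, lifted, inside $L_i$, so that the contributions organize into the clean block form above, and one must confirm that the surviving exponent is $(-1)^j$ rather than its negative. I would pin this sign down on the two-term complex $0 \to L_1 \xrightarrow{\partial} L_0 \to 0$ with $\partial$ injective, where $(\partial + \delta)_{\odd} = \partial$, $Z_0 = L_0$, $B_0 = \partial L_1$, and $\ddet_{\Q}\tau = |\ddet \partial| = [L_0 : \partial L_1] = |H_0| = |H_0|^{(-1)^0}$. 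As an alternative route, the statement also follows by induction on the length $d$ from the multiplicativity of torsion in short exact sequences of based acyclic complexes recorded in \eqref{eq:19a}, together with the additivity of $\sum_i (-1)^i \log |H_i|$.
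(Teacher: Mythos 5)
Your proof is correct, but it takes a genuinely different route from the paper. The paper argues by induction on the length $d$: it splits $L_{\hullet}$ into a short exact sequence of free complexes $0 \to L'_{\hullet} \to L_{\hullet} \to L''_{\hullet} \to 0$ with $L''_{\hullet} = (L_1/\Ker\partial \hookrightarrow L_0)$, invokes Milnor's multiplicativity of torsion together with the fact that shifting a complex inverts its torsion, and settles the base case $d=1$ by the elementary divisor theorem ($|\ddet\partial_{\Q}| = |\coker\partial|$). You instead compute $\tau(\Q\otimes L_{\hullet})$ in one shot from \eqref{eq:21}: the splittings $0 \to Z_i \to L_i \to B_{i-1}\to 0$ give integral bases $\zeta_i \cup \tilde\beta_{i-1}$ (harmless by $K_1(\Z)=\{\pm1\}$, which also gives the basis-independence), your $\delta$ is a genuine chain contraction with $\delta^2=0$ (so \eqref{eq:21} applies without modification), the block count matches because $\rk Z_i = \rk B_i$ by acyclicity over $\Q$, and $(\hpartial+\delta)_{\odd}$ becomes a permuted block-diagonal matrix whose blocks are the inclusion matrices $A_j$ and their inverses, with $|\ddet A_j| = [Z_j:B_j] = |H_j(L_{\hullet})|$. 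What your approach buys is self-containedness — no appeal to \cite[Theorem 3.1]{Mi} or to the shift property of torsion — and it makes visible exactly where the orders $|H_i|$ enter; the cost is the bookkeeping with compatible lifts $\tilde\beta_{i-1}$, which you handle correctly, and the sign/exponent check, which your $d=1$ sanity check pins down. Your closing remark that the statement also follows from \eqref{eq:19a}-style multiplicativity by induction is essentially the paper's proof.
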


\begin{proof}
By induction. For $d = 0$ the assertion is trivial. For $d = 1$ we have $0 \to L_1 \xrightarrow{\partial} L_0 \to 0$ with $\partial_{\Q} : \Q \otimes L_1 \silo \Q \otimes L_0$ an isomorphism. Then $\delta = \partial^{-1}_{\Q}$ gives a chain retraction and
\[
\ddet_{\Q} \tau (\Q \otimes L_{\hullet}) = \ddet_{\Q} ((\partial_{\Q} + \delta)_{\odd}) = \ddet_{\Q} (\partial_{\Q}) \in \Q^{\times} / \{ \pm 1 \} \; .
\]
Here, after identifying $L_0$ and $L_1$ with $\Z^N$ for $N = \rk L_0 = \rk L_1$ we view $\partial_{\Q}$ as an automorphism of $\Q^N$. The map $\partial$ is injective since $\partial_{\Q}$ is injective and since $L_1$ is free. It is known that 
\[
|\ddet (\partial_{\Q})| = |\coker \partial| \; ,
\]
as follows from the theorem on elementary divisors. Since $H_1 (L_{\hullet}) = \ker \partial = 0$ and $H_0 (L_{\hullet}) = \coker \partial$ it follows that
\[
\ddet_{\Q} \tau (\Q \otimes L_{\hullet}) = |H_0 (L_{\hullet})| \, |H_1 (L_{\hullet})|^{-1} \quad \text{in} \; \Q^{\times} / \{ \pm 1 \} \; .
\]
Now assume that $d \ge 2$ and that the proposition holds for $d$ replaced by $d-1$. Consider the commutative diagram with exact lines:
\[
\xymatrix{
 & 0 \ar[d] & 0 \ar[d] & 0 \ar[d] & \\
0 \ar[r] & L_d \ar@{=}[r] \ar[d] & L_d \ar[r] \ar[d] & 0 \ar[r] \ar[d] & 0 \\
 & \vdots \ar[d] & \vdots \ar[d] & \vdots \ar[d] & \\
0 \ar[r] & L_2 \ar@{=}[r] \ar[d] & L_2 \ar[r] \ar[d] & 0 \ar[r] \ar[d] &  0 \\
0 \ar[r] & \Ker \partial \ar[r] \ar[d] & L_1 \ar[r] \ar[d]_{\partial} & L_1 / \Ker \partial \ar[r] \ar[d]_{\partial} & 0 \\
0 \ar[r] & 0 \ar[r] \ar[d] & L_0 \ar[r] \ar[d] & L_0 \ar[r] \ar[d] & 0 \\
 & 0 & 0 & 0 &
}
\]
It defines an exact sequence of complexes
\[
0 \longrightarrow L'_{\hullet} \longrightarrow L_{\hullet} \longrightarrow L''_{\hullet} \longrightarrow 0 \; .
\]
Noting that $\Ker \partial \subset L_1$ and $L_1 / \Ker \partial \overset{\partial}{\hookrightarrow} L_0$ are injective we see that both $L'_{\hullet}$ and $L''_{\hullet}$ are complexes of finitely generated free $\Z$-modules. By \cite[Theorem 3.1]{Mi} we have
\begin{equation}
\label{eq:29} 
\tau (\Q \otimes L_{\hullet}) = \tau (\Q \otimes L'_{\hullet}) \tau (\Q \otimes L'') \quad \text{in} \; K_T (\Q)\; .
\end{equation}
It follows from the definition of torsion in \cite[\S\,3,4]{Mi} that shifting the numbering of a complex by one replaces its torsion by its inverse. Let $L'_{\hullet} [1]$ be the complex obtained from $L'_{\hullet}$ by putting $L_{\nu}$ in degree $\nu-1$ for $\nu \ge 2$ and $\Ker \partial$ in degree zero. Then we have
\begin{equation}
\label{eq:30}
\tau (\Q \otimes L'_{\hullet}) = \tau (\Q \otimes L'_{\hullet} [1])^{-1} \quad \text{in} \; K_T (\Q) \; ,
\end{equation}
and by the induction hypotheses
\[
\ddet_{\Q} \tau (\Q \otimes L'_{\hullet} [1]) = \prod^d_{i=1} |H_i (L_{\hullet})|^{(-1)^{i-1}} \quad \text{in} \; \Q^{\times} / \{ \pm 1 \} \; .
\]
Together with the equation from the case $d = 1$,
\[
\ddet_{\Q} \tau (\Q \otimes L''_{\hullet}) = |H_0 (L_{\hullet})|
\]
and \eqref{eq:29}, \eqref{eq:30} we get the assertion for $L_{\hullet}$:
\[
\ddet_{\Q} \tau (\Q \otimes L_{\hullet}) = \prod^d _{i=0} |H_i (L_{\hullet})|^{(-1)^i} \quad \text{in} \; \Q^{\times} / \{ \pm 1 \} \; .
\]
\end{proof}

In Theorem \ref{t4.1} we had to assume that $pM = M$ i.e. that $M$ was $p$-adically expansive of exponent zero because except for $\Gamma = \Z^N$ we only know how to define $\log_p \det_{\Gamma}$ on $K_T (\widehat{\Z\Gamma})$ and not on $K_T (c_0 (\Gamma))$.

Assume that for a $\Z\Gamma$-module of type $FL$ with a based resolution \eqref{eq:18a} the complex
\[
0 \longrightarrow c_0 (\Gamma) \otimes_{\Z\Gamma} F_d \longrightarrow \ldots \longrightarrow c_0 (\Gamma) \otimes_{\Z\Gamma} F_0 \longrightarrow 0
\]
is acyclic. Let $\tau (F_{\hullet} , \eb_{\hullet}) \in K_T (c_0 (\Gamma))$ be its torsion. It is multiplicative in short exact sequences as in \eqref{eq:19a}. Assume that we are given a ``reasonable'' definition of a homomorphism
\[
\log_p \ddet_{\Gamma} : K_T (c_0 (\Gamma)) \longrightarrow \Q_p \; .
\]
Then as in \eqref{eq:23} we can define the $p$-adic $R$-torsion
\[
\tau^{\Gamma}_p (M) = \log_p \ddet_{\Gamma} (\tau (F_{\hullet} , \eb_{\hullet})) \in \Q_p \; .
\]
It is independent of the choices of bases $\eb_{\nu}$ if we have $\log_p \det_{\Gamma} f = 0$ for $f \in \GL_r (\Z\Gamma)$. If $\Gamma$ is countable and residually finite we define ``reasonable'' to mean that $\log \det_{\Gamma}$ is compatible with \eqref{eq:13} and that the analogue of the limit formula
\[
\log_p \ddet_{\Gamma} f = \lim_{n\to\infty} (\Gamma : \Gamma)^{-1} \log_p (\ddet_{\Q_p} f^{(n)}) \quad \text{in} \; \Q_p
\]
should hold for all $f \in \GL_r (c_0 (\Gamma))$. The limit formula implies the above condition $\log_p \ddet_{\Gamma} f = 0$ for $f \in \GL_r (\Z \Gamma)$. Trivial modifications of the proof of Theorem \ref{t4.1} then give the following result which applies in particular to $\Gamma = \Z^N$.

\begin{theorem}
\label{t4.3}
Let $\Gamma$ be a countable and residually finite group for which there is a ``reasonable'' definition of $\log_p \det_{\Gamma}$ on $K_T (c_0 (\Gamma))$ in the above sense. Let $M$ be a $\Z \Gamma$-module of type $FL$ which is $p$-adically expansive and such that $\varprojlim_i M_{p^i} = 0$. Then $\tau^{\Gamma}_p (M) \in \Q_p$ is defined and independent of the resolution \eqref{eq:18a}. Moreover for any sequence $\Gamma_n \to e$ we have
\[
\tau^{\Gamma}_p (M) = \lim_{n\to \infty} (\Gamma : \Gamma_n)^{-1} \log_p \chi (\Gamma_n , M) \quad \text{in} \; \Q_p \; .
\]
\end{theorem}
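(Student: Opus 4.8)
The plan is to run the proof of Theorem \ref{t4.1} essentially verbatim, replacing $\widehat{\Z\Gamma}$ by $c_0 (\Gamma)$ throughout and invoking the assumed limit formula on $\GL_r (c_0 (\Gamma))$ in place of Theorem \ref{t3.2}. First I would apply Theorem \ref{t2.1}: since $M$ is $p$-adically expansive with $\varprojlim_i M_{p^i} = 0$, the complex $c_0 (\Gamma) \otimes_{\Z\Gamma} F_{\hullet}$ attached to any based resolution \eqref{eq:18a} is acyclic, so its torsion $\tau (F_{\hullet} , \eb_{\hullet}) \in K_T (c_0 (\Gamma))$ exists and $\tau^{\Gamma}_p (M) = \log_p \ddet_{\Gamma} (\tau (F_{\hullet} , \eb_{\hullet}))$ is defined. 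The same theorem shows that the groups $H_i (\Gamma_n , M)$ are finite and vanish for large $i$, so $\chi (\Gamma_n , M) \in \Q^{\times}$ is well defined. Independence of the resolution follows from the multiplicativity \eqref{eq:19a} together with the change-of-basis formula \eqref{eq:22} and the vanishing $\log_p \ddet_{\Gamma} f = 0$ for $f \in \GL_r (\Z\Gamma)$, which the assumed limit formula supplies.

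Next I would choose a chain contraction $\delta$ of the acyclic complex $c_0 (\Gamma) \otimes_{\Z\Gamma} F_{\hullet}$ and form the isomorphism $(\hpartial + \delta)_{\odd}$, viewed via the induced bases as $f \in \GL_r (c_0 (\Gamma))$ with $r$ as in \eqref{eq:20}. Here lies the only genuine difference from Theorem \ref{t4.1}: because $M$ need no longer satisfy $pM = M$, the contraction exists only over $c_0 (\Gamma)$ and not over $\widehat{\Z\Gamma}$, so $f$ lands in $\GL_r (c_0 (\Gamma))$ rather than in $\GL_r (\widehat{\Z\Gamma})$. This is precisely the step that forces the hypothesis of a ``reasonable'' $\log_p \ddet_{\Gamma}$ on all of $K_T (c_0 (\Gamma))$. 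Applying the assumed limit formula to $f$ yields the analogue of \eqref{eq:25}:
\[
\tau^{\Gamma}_p (M) = \lim_{n\to\infty} (\Gamma : \Gamma_n)^{-1} \log_p (\ddet_{\Q_p} (f^{(n)})) \; .
\]

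Finally I would carry over the finite-level identification of $\ddet_{\Q_p} (f^{(n)})$ with $\chi (\Gamma_n , M)$ without change. The image $f^{(n)}$ in $\GL_r (c_0 (\Gamma^{(n)})) = \GL_r (\Q_p [\Gamma^{(n)}])$, regarded as a $\Q_p$-automorphism, has class in $K_T (\Q_p)$ equal to the torsion of the based acyclic complex $\Q_p \otimes_{\Z\Gamma_n} F_{\hullet}$; since this torsion is computed from the $\Z$-bases it equals, as in the proof of Theorem \ref{t4.1}, the image of the $\Q$-torsion $\tau (\Q \otimes_{\Z\Gamma_n} F_{\hullet})$. Proposition \ref{t4.2} then identifies $\ddet_{\Q} \tau (\Q \otimes_{\Z\Gamma_n} F_{\hullet})$ with $\chi (\Gamma_n , M)$ in $\Q^{\times} / \{ \pm 1 \}$, giving \eqref{eq:28} and hence formula \eqref{eq:24}. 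I expect no real obstacle beyond bookkeeping: the entire finite-level computation takes place over $\Q_p$ and $\Q$ via the reductions $c_0 (\Gamma) \to \Q_p [\Gamma^{(n)}]$, and is blind to whether the infinite-level contraction was built over $\widehat{\Z\Gamma}$ or over $c_0 (\Gamma)$. The substantive content is thus entirely packaged into the assumed limit formula, which governs the single step where $\GL_r (c_0 (\Gamma))$ rather than $\GL_r (\widehat{\Z\Gamma})$ enters.
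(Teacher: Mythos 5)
Your proposal is correct and follows exactly the route the paper intends: the paper's own justification of Theorem \ref{t4.3} is literally that ``trivial modifications of the proof of Theorem \ref{t4.1}'' give the result, and you have spelled out precisely those modifications (the chain contraction and hence $f=(\hpartial+\delta)_{\odd}$ now live over $c_0(\Gamma)$, the assumed ``reasonable'' limit formula replaces Theorem \ref{t3.2}, and the finite-level identification via Proposition \ref{t4.2} is unchanged since it happens over $\Q_p[\Gamma^{(n)}]$ and $\Q$ in either case). No gaps.
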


Note that the multiplicative Euler characteristics $\chi (\Gamma_n , M)$ exist by Theorem \ref{t2.1}. It follows from the definition or the limit formula that $\tau^{\Gamma}_p$ is additive in short exact sequences $0 \to M' \to M \to M'' \to 0$. If the assumptions of the Theorem are satisfied for $M'$ and $M''$ they hold for $M$ as well by Proposition \ref{t1.8} for $R = \Z \Gamma$. As mentioned before, Theorem \ref{t4.3} applies to $\Gamma = \Z^N$ with $\log_p \det_{\Gamma}$ defined by formula \eqref{eq:17}, because in Corollary \ref{t3.4} we proved the required limit formula. In this case the conditions on $M$ in Theorem \ref{t4.3} can be simplified:

\begin{remark}
\label{t4.4a}
For $\Gamma = \Z^N$ a $\Z\Gamma$-module $M$ is of type $FL$ if and only if it is finitely generated. In this case there exist $FL$-resolutions \eqref{eq:18a} of length $d \le N+1$. Moreover we have $\varprojlim_i M_{p^i} = 0$.
\end{remark}

\begin{proof}
Since $\Z\Gamma \cong \Z [t^{\pm 1}_1 , \ldots , t^{\pm 1}_N]$ is a Noetherian ring every finitely generated module $M$ has a resolution by finitely generated free $\Z\Gamma$-modules
\[
\ldots \longrightarrow F_1 \xrightarrow{\partial} F_0 \longrightarrow M \longrightarrow 0 \; .
\]
The global dimension of $\Z\Gamma$ is $N+1$. Hence the projective dimension $d$ of $M$ is $d \le N+1$. By \cite[Lemma 4.1.6]{W}, in the resolution
\[
0 \longrightarrow F_d / \Ker \partial \xrightarrow{\partial} F_{d-1} \longrightarrow \ldots \longrightarrow F_0 \longrightarrow M \longrightarrow 0
\]
the finitely generated module $F_d / \Ker \partial$ is projective. It was pointed out in \cite[p. 111]{Sw} that the Quillen--Suslin theorem is valid for $\Z\Gamma$. Hence every projective module over $\Z\Gamma$ and in particular $F_d / \Ker \partial$ is free. The converse implication is clear. The vanishing of $\varprojlim_i M_{p^i}$ is a special case of Proposition \ref{t1.6}.
\end{proof}

\begin{cor}
\label{t4.4}
For $\Gamma = \Z^N$ let $M$ be a $p$-adically expansive $\Z\Gamma$-module. Then the $p$-adic $R$-torsion $\tau^{\Gamma}_p (M) \in \Q_p$ (defined using $\log_p \det_{\Gamma}$ from \eqref{eq:17}) satisfies
\[
\tau^{\Gamma}_p (M) = \lim_{n\to \infty} (\Gamma : \Gamma_n)^{-1} \log_p \chi (\Gamma_n , M) \quad \text{in} \; \Q_p
\]
for a sequence $\Gamma_n \to 0$.
\end{cor}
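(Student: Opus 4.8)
The plan is to obtain the corollary as a direct application of Theorem~\ref{t4.3}; the whole task is to check that $\Gamma = \Z^N$ and the module $M$ satisfy every hypothesis of that theorem, each of which has already been arranged in the preceding material.

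I first verify the conditions on the group. The group $\Z^N$ is countable, and it is residually finite because any nonzero element survives in some finite quotient $(\Z/n\Z)^N$. The one substantive requirement on $\Gamma$ is a ``reasonable'' definition of $\log_p \det_\Gamma$ on $K_T(c_0(\Gamma))$ in the sense specified before Theorem~\ref{t4.3}, and this is exactly what \S\,\ref{sec:3} provides: the homomorphism $\log_p \det_\Gamma$ on $K_T(c_0(\Gamma))$ is given by formula \eqref{eq:17}, it is compatible with \eqref{eq:13} by the computation following \eqref{eq:18}, and Corollary~\ref{t3.4} establishes the limit formula
\[
\log_p \det_\Gamma f = \lim_{n\to\infty} (\Gamma:\Gamma_n)^{-1} \log_p(\det_{\Q_p} f^{(n)})
\]
for all $f \in \GL_r(c_0(\Gamma))$. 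This limit formula in particular forces $\log_p\det_\Gamma f = 0$ for $f \in \GL_r(\Z\Gamma)$, which is what makes $\tau^\Gamma_p(M)$ independent of the chosen bases.

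I next verify the conditions on $M$. Being $p$-adically expansive, $M$ is by definition of type $FL$, so a based resolution \eqref{eq:18a} exists (one may take $d \le N+1$ by Remark~\ref{t4.4a}), and the acyclicity of $c_0(\Gamma)\otimes_{\Z\Gamma} F_\hullet$ needed to define the torsion follows from Theorem~\ref{t2.1}. The only further hypothesis of Theorem~\ref{t4.3} is $\varprojlim_i M_{p^i} = 0$. This holds automatically here, since $\Z\Gamma \cong \Z[t_1^{\pm1},\dots,t_N^{\pm1}]$ is Noetherian and $M$ is finitely generated, so Proposition~\ref{t1.6} applies; this is precisely the last assertion of Remark~\ref{t4.4a}.

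With all hypotheses confirmed, Theorem~\ref{t4.3} yields the asserted identity, so that $\tau^\Gamma_p(M)$ is defined, independent of the resolution, and equal to the renormalized $p$-adic limit of $\log_p\chi(\Gamma_n,M)$. There is no genuine obstacle left at this stage: the analytic heart of the matter was already handled in Corollary~\ref{t3.4}, which extended the limit formula for $\log_p\det_\Gamma$ from $1+pM_r(\widehat{\Z\Gamma})$ and $\GL_r(\widehat{\Z\Gamma})$ to all of $\GL_r(c_0(\Gamma))$ via the product decomposition \eqref{eq:15} of $c_0(\Gamma)^\times$, while Remark~\ref{t4.4a} disposes of the auxiliary vanishing hypothesis.
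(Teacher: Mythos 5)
Your proposal is correct and follows exactly the route the paper intends: Corollary \ref{t4.4} is obtained by specializing Theorem \ref{t4.3} to $\Gamma = \Z^N$, with the ``reasonable'' $\log_p\det_\Gamma$ supplied by \eqref{eq:17} together with Corollary \ref{t3.4}, and the hypotheses on $M$ reduced to $p$-adic expansiveness via Remark \ref{t4.4a} and Proposition \ref{t1.6}. Nothing is missing.
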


In section \ref{sec:6} we will see that for $\Gamma = \Z^N$ the multiplicative Euler characteristics $\chi (\Gamma_n , M)$ have a very nice interpretation as intersection numbers on arithmetic schemes. We end this section by mentioning a geometric characterization of $p$-adic expansiveness in the case $\Gamma = \Z^N$. Set $R = \Z\Gamma = \Z [t^{\pm 1}_1 , \ldots , t^{\pm 1}_N]$. Let
\[
T^N_p = \{ (z_1 , \ldots , z_n) \in \oQ^N_p \mid |z_i|_p = 1 \; \text{for} \; i = 1 , \ldots , N \}
\]
be the $p$-adic $N$-torus in $\oQ^N_p$. Recall that the support of an $R$-module $M$ is $\supp M = \spec R / \Ann (M)$ where $\Ann (M)$ is the annihilator ideal of $M$ in $R$. A prime ideal $\ep$ of $R$ is in $\supp M$ if and only if $M_{\ep} \neq 0$. For a finite $R$-module $M$ let $\ep_1 ,\ldots , \ep_r$ be the minimal prime ideals of $\supp M$. Then
\[
\supp M = V (\ep_1) \cup \ldots \cup V (\ep_r)
\]
with $V (\ep) = \spec R / \ep$ is the decomposition of $\supp M$ into irreducible components. The prime ideals $\ep_1 , \ldots , \ep_r$ are also the minimal associated prime ideals of $M$, see \cite[Theorem 6.5]{M}. In \cite[Theorem 3.3]{B} the following characterizations of $p$-adic expansiveness are shown:

\begin{theorem}[Br\"auer]
\label{t4.5}
For $\Gamma = \Z^N$ a finitely generated $\Z\Gamma$-module $M$ is $p$-adically expansive if and only if either of the following equivalent conditions holds:\\
a) 
\begin{equation}
\label{eq:31}
T^N_p \cap (\supp M) (\oQ_p) = \emptyset \; .
\end{equation}
b) The module $M$ is $S_p$-torsion, where $S_p$ is the multiplicative system $S_p = \Z\Gamma \cap c_0 (\Gamma)^{\times}$. 
\end{theorem}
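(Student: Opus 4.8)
The plan is to work throughout with the characterization of $p$-adic expansiveness from Proposition~\ref{t1.7}: a finitely generated $\Z\Gamma$-module $M$ is $p$-adically expansive if and only if $c_0(\Gamma)\otimes_{\Z\Gamma}M=0$, equivalently the sequence $(p^iM)$ is eventually stationary. Here $R=\Z\Gamma\cong\Z[t_1^{\pm1},\dots,t_N^{\pm1}]$ is Noetherian of finite global dimension (so every finitely generated module, in particular every $R/\ep$, is of type $FP$), and $c_0(\Gamma)\cong\Q_p\langle t_1^{\pm1},\dots,t_N^{\pm1}\rangle$ is the Tate algebra, which is Noetherian and Jacobson and whose maximal spectrum is, by the rigid-analytic Nullstellensatz, the set of Galois orbits of points of $T^N_p$. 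Two structural inputs will be taken for granted: first, that $c_0(\Gamma)$ is flat over $R$ (the $p$-adic completion $\widehat{\Z\Gamma}$ of the Noetherian ring $R$ is flat over $R$, and $c_0(\Gamma)=\widehat{\Z\Gamma}[1/p]$ is a localization of it); second, the Nullstellensatz dictionary just mentioned. Both reduce the theorem to commutative algebra over $R$ together with one elementary trick.

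The heart of the argument is the following per-prime statement: for a prime ideal $\ep$ of $R$ one has $c_0(\Gamma)\otimes_R R/\ep=0$ if and only if $\ep\cap S_p\neq\emptyset$. The implication $\Leftarrow$ is immediate, since an element $s\in\ep\cap S_p$ lies in $\ep c_0(\Gamma)$ and is a unit, forcing $\ep c_0(\Gamma)=c_0(\Gamma)$. For $\Rightarrow$ I would read $c_0(\Gamma)\otimes_R R/\ep=0$ as saying that $R/\ep$ is $p$-adically expansive, so $(p^i(R/\ep))$ is stationary. Writing $A=R/\ep$: if $p\in\ep$ then $p\in\Z\Gamma\cap c_0(\Gamma)^\times=S_p$ and we are done; otherwise $A$ is a domain with $p\neq0$, and stationarity gives $p^{n_0}=p^{n_0+1}a$ for some $a\in A$, whence $p^{n_0}(1-pa)=0$ and so $pa=1$ in $A$. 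Lifting $a$ to $\tilde a\in R$, the element $s=p\tilde a-1$ lies in $\ep$; since $\tilde a$ has integer coefficients, $1-p\tilde a\in 1+p\widehat{\Z\Gamma}=U^1\subset c_0(\Gamma)^\times$, so $s\in S_p$. I expect this extraction of a single Tate-unit inside $\ep$ to be the main obstacle: the naive route via the Nullstellensatz only yields $\ep c_0(\Gamma)=c_0(\Gamma)$, and it is not clear how to produce one polynomial in $\ep$ that is a unit in $c_0(\Gamma)$; passing to the domain $R/\ep$, in which $p$ becomes invertible, is what makes the unit explicit.

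With this lemma the equivalence of expansiveness with (b) follows by standard localization. If $M$ is $S_p$-torsion, i.e. $S_p^{-1}M=0$, then $c_0(\Gamma)\otimes_R M=c_0(\Gamma)\otimes_{S_p^{-1}R}S_p^{-1}M=0$, so $M$ is expansive. Conversely, if $c_0(\Gamma)\otimes_R M=0$, then for every associated prime $\ep$ of $M$ the inclusion $R/\ep\hookrightarrow M$ stays injective after applying the flat functor $c_0(\Gamma)\otimes_R-$, giving $c_0(\Gamma)\otimes_R R/\ep=0$ and hence $\ep\cap S_p\neq\emptyset$ by the lemma; since every associated prime meets $S_p$ we get $S_p^{-1}M=0$, which is (b).

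Finally, for the equivalence with (a) I would argue directly on the finitely generated $c_0(\Gamma)$-module $c_0(\Gamma)\otimes_R M$. Because $c_0(\Gamma)$ is Noetherian and Jacobson, this module vanishes if and only if its fibre $M\otimes_R\kappa(z)$ vanishes at every point $z\in T^N_p(\oQ_p)$, where $\kappa(z)\subset\oQ_p$ is the residue field of the evaluation map $R\to\oQ_p$ at $z$. By Nakayama this fibre is nonzero precisely when the prime $\ker(R\to\kappa(z))$ lies in $\supp M$, i.e. when $z\in(\supp M)(\oQ_p)$. Hence $c_0(\Gamma)\otimes_R M=0$ if and only if $T^N_p\cap(\supp M)(\oQ_p)=\emptyset$, which is (a). Combining the three equivalences proves the theorem; the decomposition $\supp M=V(\ep_1)\cup\dots\cup V(\ep_r)$ into irreducible components may be used to phrase condition (a) in terms of the minimal primes $\ep_i$ alone.
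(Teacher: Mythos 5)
The paper does not actually prove this theorem: it is quoted from Br\"auer's dissertation \cite[Theorem 3.3]{B}, so there is no in-paper argument to compare against. Judged on its own, your proof is correct and self-contained. The pivot is your per-prime lemma, and its hard direction is handled cleanly: for $\ep$ with $p\notin\ep$, stationarity of $(p^i(R/\ep))$ in the domain $R/\ep$ forces $pa=1$ there, and the lift $s=p\tilde a-1\in\ep$ is automatically a Tate unit because $1-p\tilde a\in 1+p\widehat{\Z\Gamma}$; this really does circumvent the obstacle you name, that $\ep c_0(\Gamma)=c_0(\Gamma)$ alone does not hand you a single Laurent polynomial in $\ep\cap S_p$. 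The supporting inputs are all legitimate: $\widehat{\Z\Gamma}$ is the $p$-adic completion of the Noetherian ring $\Z\Gamma$, hence flat, and $c_0(\Gamma)=\widehat{\Z\Gamma}[1/p]$ is a localization of it, so the passage to associated primes of $M$ is justified; the rigid Nullstellensatz identifies the maximal ideals of the Laurent--Tate algebra with Galois orbits in $T^N_p$; and ``every associated prime meets $S_p$'' does give $S_p^{-1}M=0$ since a nonzero module over a Noetherian ring has an associated prime. Two minor remarks. First, Jacobson-ness of $c_0(\Gamma)$ is not needed in your proof of (a): a finitely generated module over any commutative ring vanishes iff its fibres at all maximal ideals vanish, by Nakayama; what you actually use from rigid geometry is only the description of the maximal spectrum. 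Second, (a) can be obtained even more directly, without flatness, from the fact that for finitely generated $M$ the support of $c_0(\Gamma)\otimes_R M$ is the preimage of $\supp M$ under $\Spec c_0(\Gamma)\to\Spec\Z\Gamma$; one could then deduce (b) from (a) by applying your unit-extraction lemma to the minimal primes of $\supp M$. But your route through $\Ass(M)$ is equally valid and keeps the two conditions on an equal footing.
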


\cite{B} formulates condition \eqref{eq:31} in terms of associated primes of $M$. This is equivalent to our formulation since $\Ass (M)$ and $\supp M$ have the same minimal primes. 
\section{Algebraic dynamical systems} \label{sec:5}
Actions of countable discrete groups $\Gamma$ on compact abelian topological groups $X$ by continuous group automorphisms are called ``algebraic''. The Pontrjagin dual $M = \aX$ is a discrete abelian group with the induced $\Gamma$-action, and hence a $\Z\Gamma$-module. In this way we obtain a contravariant equivalence of the category of algebraic $\Gamma$-actions and the category of $\Z\Gamma$-modules $M$. For commutative groups $\Gamma$ the latter category is equivalent to the category of quasicoherent sheaves on the scheme $\spec \Z\Gamma$. Algebraic dynamical systems are studied in depth in the book \cite{Sch} with an emphasis on the case $\Gamma = \Z^N$. Often dynamical properties of the $\Gamma$-action on $X$ have an appealing algebraic or (for $\Gamma = \Z^N$) algebraic-geometric characterization in terms of the $\Z\Gamma$-module $M$ or the corresponding sheaf on $\A^N = \spec \Z \Gamma$. For more studies of algebraic actions of non-commutative groups we refer e.g. to \cite{D1}, \cite{CL}, \cite{LT} for example. 

An algebraic action of $\Gamma$ on $X$ is expansive if and only if there is a neighborhood $U$ of $0 \in X$ such that $\bigcap_{\gamma \in \Gamma} \gamma U = \{ 0 \}$. Expansiveness is equivalent to the condition that the $\Z\Gamma$-module $M$ is finitely generated and $L^1 \Gamma \otimes_{\Z\Gamma} M = 0$ cf. \cite[Theorem 3.1]{CL}. For $\Gamma = \Z^N$ it is also equivalent to the condition
\begin{equation}
\label{eq:32}
T^N \cap (\supp M) (\C) = \emptyset \; .
\end{equation}
Here
\[
T^N = \{ (z_1 , \ldots , z_N) \in \C^N \mid |z_i| = 1 \; \text{for} \; i = 1 , \ldots , N \}
\]
is the real $N$-torus in $\C^N$. This follows from \cite[Theorem 6.5]{Sch} because the minimal primes of $\Ass (M)$ and $\supp M$ are the same. For discrete groups $\Gamma$ there is a $p$-adic analogue of $L^1 \Gamma$, however in the $p$-adic context, because of the ultrametric inequality it is more natural to work with the algebra $c_0 (\Gamma)$. Thus it is natural to call the $\Gamma$-action $p$-adically expansive if $c_0 (\Gamma) \otimes_{\Z\Gamma} M = 0$ and --- for technical reasons --- if the $\Z\Gamma$-module $M$ is of type $FP$. In other words, the $\Gamma$-action on $X$ is $p$-adically expansive if and only if $M$ is a $p$-adically expansive $\Z\Gamma$-module. Incidentally, for $\Gamma = \Z^N$ a $\Z\Gamma$-module is of type $FP$ if and only if it is finitely generated. The analogy between conditions \eqref{eq:31} and \eqref{eq:32} in the classical and the $p$-adic case lends further credibility to our definition of $p$-adic expansiveness.

\begin{prop}
\label{t5.1}
Consider an algebraic action of a countable discrete group $\Gamma$ on a compact abelian group $X$ such that $M = \aX$ is a $\Z\Gamma$-module of type $FL$. Then the following conditions are equivalent:\\
a) The action of $\Gamma$ on $X$ is $p$-adically expansive.\\
b) There is an integer $n_0 \ge 0$ with $X_{p^n} = X_{p^{n_0}}$ for all $n \ge n_0$ where $X_{p^n} = \ker (p^n : X \to X)$.
\end{prop}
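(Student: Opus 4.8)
The plan is to rewrite both conditions as assertions that a certain chain of subobjects eventually stabilizes, and then to match the two chains by Pontrjagin duality. By the definition of $p$-adic expansiveness in \S\,\ref{sec:2} together with Proposition \ref{t1.7} b) (which applies since $\Z\Gamma$ is $p$-torsion-free, so $(\Z\Gamma)_p = 0$, and $M$ of type $FL$ is of type $FP$), condition a) holds if and only if the descending chain of submodules $M \supseteq pM \supseteq p^2 M \supseteq \cdots$ is eventually stationary, i.e.\ there is some $n_0$ with $p^i M = p^{n_0} M$ for all $i \ge n_0$. On the other side, the subgroups $X_{p^n} = \ker (p^n : X \to X)$ form an ascending chain $X_{p^0} \subseteq X_{p^1} \subseteq X_{p^2} \subseteq \cdots$, and condition b) is exactly the requirement that this chain be eventually stationary. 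So it suffices to show that the descending chain $(p^i M)$ stabilizes if and only if the ascending chain $(X_{p^n})$ does.

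First I would invoke that Pontrjagin duality is an exact contravariant anti-equivalence between discrete and compact abelian groups, under which $X = \aM$ and the endomorphisms correspond: $p^n : X \to X$ is the dual of $p^n : M \to M$, since $\Hom(-,\T)$ is additive. Applying the dual functor to the right-exact sequence $M \xrightarrow{p^n} M \to M/p^n M \to 0$ and using exactness of duality yields a left-exact sequence $0 \to (M/p^n M)^{\ast} \to X \xrightarrow{p^n} X$, which identifies $X_{p^n} = \ker(p^n : X \to X)$ with $(M/p^n M)^{\ast}$. Moreover the natural reduction surjections $M/p^{n+1} M \to M/p^n M$ dualize to the inclusions $X_{p^n} \hookrightarrow X_{p^{n+1}}$, so the two chains correspond termwise.

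Since duality is an anti-equivalence, a transition map is an isomorphism precisely when its dual is. The surjection $M/p^{n+1}M \to M/p^n M$ has kernel $p^n M / p^{n+1} M$, hence is an isomorphism exactly when $p^n M = p^{n+1} M$; dually the inclusion $X_{p^n}\hookrightarrow X_{p^{n+1}}$ is an isomorphism exactly when $X_{p^n}=X_{p^{n+1}}$. Thus $p^n M = p^{n+1} M$ if and only if $X_{p^n} = X_{p^{n+1}}$. Finally I would note that for either chain the equality of two consecutive terms propagates upward: if $p^n M = p^{n+1} M$ then multiplying by $p$ gives $p^{n+1} M = p^{n+2} M$, and likewise $X_{p^n} = X_{p^{n+1}}$ forces $X_{p^{n+1}} = X_{p^{n+2}}$. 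Consequently one chain is eventually stationary if and only if the other is, which establishes the equivalence of a) and b).

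I do not expect a serious obstacle here; the argument is essentially a duality computation. The one point requiring care is the bookkeeping for Pontrjagin duality, namely keeping track of the contravariance so that kernels on the $X$-side match cokernels on the $M$-side, and verifying that the transition maps of the two chains are genuinely mutually dual.
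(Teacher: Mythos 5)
Your proof is correct and follows essentially the same route as the paper: the paper likewise translates a) into the stabilization of $M/p^nM$ (equivalently of $p^nM$) via Proposition \ref{t1.7} b) and then identifies $X_{p^n}$ with $(M/p^nM)^{\ast}$ by Pontrjagin duality. You have merely spelled out the duality bookkeeping that the paper leaves implicit.
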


\begin{proof}
We know that a) is equivalent to $M / p^n M = M / p^{n_0} M$ for some $n_0$ and all $n \ge n_0$. Via Pontrjagin duality this is equivalent to b). 
\end{proof}
It is somewhat odd that condition b) depends only the structure of $X$ as an abelian group and not as in the classical case on the $\Gamma$-action.

For algebraic actions of amenable groups $\Gamma$ the topological entropy $h$ coincides with the measure theoretic entropy with respect to the Haar probability measure on $X$. Both definitions of entropy do not seem to have an analogue with values in the $p$-adic numbers. For an expansive action of $\Gamma$ on $X$ it follows from the definition that for every cofinite normal subgroup $\Delta$ of $\Gamma$ the number of fixed points $X^{\Delta}$ of $\Delta$ on $X$ is finite. If $\Gamma$ is residually finite one may therefore consider the periodic points entropy
\begin{equation}
\label{eq:33}
h^{\per} := \lim_{n\to \infty} (\Gamma : \Gamma_n)^{-1} \log |X^{\Gamma_n}| \in [0,\infty] \quad \text{for} \; \Gamma_n \to e
\end{equation}
if it exists. For $f \in M_N (\Z\Gamma)$ the $\Gamma$-action on $X_f = \aM_f$ with $M_f = (\Z\Gamma)^N / (\Z\Gamma)^N f$ is expansive if and only if $L^1 \Gamma \otimes_{\Z\Gamma} M_f = 0$ i.e. if there is some $g \in M_N (L^1 \Gamma)$ with $gf = 1$. Since as mentioned above $M_N (L^1 \Gamma)$ is directly finite this means that $f \in \GL_N (L^1 \Gamma)$. In this case, at least for $N = 1$ it is known by \cite{DS} that $h^{\per}$ exists and that it can be expressed by the Fuglede--Kadison determinant on $K_1$ of the von Neumann algebra of $\Gamma$
\begin{equation}
\label{eq:34}
h^{\per} = \log \ddet_{\Nh\Gamma} f \; .
\end{equation}
Moreover, if in addition $\Gamma$ is amenable so that the entropy of the $\Gamma$-action on $X_f$ is defined, we have
\begin{equation}
\label{eq:35}
h = h^{\per} \; .
\end{equation}
In \cite{D} we replaced the classical logarithm in formula \eqref{eq:33} by the $p$-adic logarithm $\log_p : \Q^{\times}_p \to \Z_p$ and called the resulting quantity
\begin{equation}
\label{eq:36}
h^{\per}_p := \lim_{n\to \infty} (\Gamma : \Gamma_n)^{-1} \log_p |X^{\Gamma_n}|
\end{equation}
the $p$-adic (periodic points) entropy if the limit exists for all $\Gamma_n \to e$. We also introduced $\log_p \ddet_{\Gamma} f$ as an analogue of $\log \det_{\Nh\Gamma} f$. For $X = X_f$ we proved the formula
\begin{equation}
\label{eq:37}
h^{\per}_p = \log_p \ddet_{\Gamma} f
\end{equation}
under the additional conditions on $\Gamma$ and $f$ of Theorem \ref{t3.2}. For $\Gamma = \Z^N$ formula \eqref{eq:35} is known to be true for arbitrary expansive algebraic actions by \cite[Theorem 21.1]{Sch}. 

In the $p$-adic case however, it was noted by Br\"auer \cite[Example 7.1]{B}, that even for $\Gamma = \Z$ there exist $p$-adically expansive actions for which the correspoding limit \eqref{eq:36} does not exist: Consider
\[
\F_4 = \F_2 [t] / (t^2 + t + 1) = \Z [t , t^{-1}] / (2 , t^2 + t + 1)
\]
as a $\Z [\Z] = \Z [t , t^{-1}]$-module. Thus $t$ acts by multiplication with the generator $\xi := t \mod (2 , t^2 + t + 1)$ of $\F^{\times}_4$ on $\F_4$. Fix a prime number $p \neq 2$. Then multiplication by $p$ on $M = \F_4$ is invertible. Hence $M$ is $p$-adically expansive (of exponent zero) and $\varprojlim_i M_{p^i} = 0$. Consider the sequence $\Gamma_n = 3n \Z \to 0$. We have $|X^{\Gamma_n}| = |X| = 4$ and hence
\[
(\Gamma : \Gamma_n)^{-1} \log_p |X^{\Gamma_n}| = (3n)^{-1} \log_p |X^{\Gamma_n}| = (3n)^{-1} \log_p 4 \; .
\]
Since $\log_p 4 \neq 0$ for $p \neq 2$ this sequence does not converge in $\Q_p$ for $n \to \infty$. Br\"auer showed that $\log_p \det_{\Z^N} f$ only depends on the $\Z [\Z^N]$-module $M_f$ and gave a natural extension of $\log_p \det_{\Z^N}$ to all $p$-adically expansive $\Z [\Z^N]$-modules $M$, see \eqref{eq:44} below. For the module $M = \F_4$ above he obtained the value $\log_p \det_{\Z^N} M = 0$, so that we should have $h^{\per}_p = 0$ in this case. The starting points of the present note were the following observations:
\begin{compactitem}
\setlength{\itemsep}{0pt}
\item For residually finite $\Gamma$ and $X = X_f$ with $f \in M_r (\Z \Gamma)$ $p$-adically expansive, we have $H^i (\Gamma_n ,X) = 0$ for $i \ge 1$ and therefore
\begin{equation}
\label{eq:38}
|X^{\Gamma_n}| = \chi (\Gamma_n , X) := \prod_i |H^i (\Gamma_n, X)|^{(-1)^i} \; .
\end{equation}
This is shown after the proof of Proposition \ref{t5.2} below.
\item For $\Gamma \cong \Z$ and any finite $\Gamma$-module $X$ we have $\chi (\Gamma , X) = 1$. This follows from the exact sequence
\[
0 \longrightarrow H^0 (\Gamma , X) \longrightarrow X \xrightarrow{f-\id} X \longrightarrow H^1 (\Gamma, X) \longrightarrow 0
\]
where $f$ is the automorphism of $X$ corresponding to a generator of $\Gamma \cong \Z$. 
\end{compactitem}
Thus, redefining the $p$-adic periodic points entropy as the limit
\begin{equation}
\label{eq:40}
h^{\per}_p := \lim_{n\to\infty} (\Gamma : \Gamma_n)^{-1} \log_p \chi (\Gamma_n , X) \quad \text{for} \; \Gamma_n \to e 
\end{equation}
if it exists, we get the same quantity as before for \textit{principal} $p$-adically expansive actions. Moreover, for the non-principal $\Gamma = \Z$-action on $X = \aF_4$ above we have $\chi (\Gamma_n , X) = 1$ for any non-trivial subgroup $\Gamma_n$ of $\Gamma$ and hence $h^{\per}_p = 0$ for all $\Gamma_n \to 0$ as suggested by Br\"auer's considerations.

Even in the classical case it looks natural in relation to entropy to consider the following limit --- if it exists
\begin{equation}
\label{eq:41}
\lim_{n\to \infty} (\Gamma : \Gamma_n)^{-1} \log \chi (\Gamma_n , X) \; .
\end{equation}
Namely, entropy is known to be additive in short exact sequences and the logarithmic Euler characteristic has the same property. Of course, with \eqref{eq:41} there is the additional complication that the groups $H^i (\Gamma_n , X)$ have to be finite and zero for large $i$. This is the case for expansive algebraic actions on $X$ if $M = \aX$ is of type $FL$ and if $L^1 \Gamma$ is a flat $\Z\Gamma$-module. However flatness fails if $\Gamma$ contains a free group in two variables, cf. \cite[Theorem 3.1.(4) and Remark 3.4]{CL}. On the other hand, by Theorem \ref{t2.1} and Proposition \ref{t5.2} below we know that $p$-adic expansiveness  and the condition $\varprojlim_i M_{p^i} = 0$ imply the finiteness of all $H^i (\Gamma_n , X)$.  We will now rephrase the main theorem of the previous section as a calculation of the modified $p$-adic periodic points entropy \eqref{eq:40}.

\begin{prop}
\label{t5.2}
Let $\Gamma$ be a discrete group and $M$ a discrete $\Z\Gamma$-module. Then the cohomology groups $H^i (\Gamma, X)$ of the compact Pontrjagin dual $X = \aM$ are compact and there is a natural topological isomorphism
\begin{equation}
\label{eq:41a}
H_i (\Gamma , M)^{\ast} = H^i (\Gamma , X) \quad \text{for each} \;  i \ge 0 \; .
\end{equation}
In particular $H^i (\Gamma, X)$ is finite (resp. zero) if and only if $H_i (\Gamma, M)$ is finite (resp. zero) and in this case $|H^i (\Gamma, X)| = |H_i (\Gamma, M)|$. Thus
\[
\chi (\Gamma, X) = \prod_i |H^i (\Gamma,X)|^{(-1)^i}
\]
is defined if and only
\[
\chi (\Gamma , M) = \prod_i |H_i (\Gamma , M)|^{(-1)^i}
\]
is defined. In this case we have $\chi (\Gamma,X) = \chi (\Gamma, M)$.
\end{prop}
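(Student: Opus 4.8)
The plan is to realize both sides as the (co)homology of mutually dual complexes and then to invoke the exactness of Pontrjagin duality. First I would fix a resolution $Q_{\hullet} \to \Z$ of the trivial module by free $\Z\Gamma$-modules, for instance the bar resolution, so that by definition $H_i(\Gamma, M) = H_i(Q_{\hullet} \otimes_{\Z\Gamma} M)$ and $H^i(\Gamma, X) = H^i(\Hom_{\Z\Gamma}(Q_{\hullet}, X))$, the latter computed with the natural topology coming from the compact groups $\Hom_{\Z\Gamma}(Q_i, X)$. Resolving the fixed module $\Z$ (rather than $M$) is what lets me use the \emph{same} $Q_{\hullet}$ on both sides and dualize.

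The central point is a natural topological isomorphism
\[
\Hom_{\Z\Gamma}(Q, X) \cong (Q \otimes_{\Z\Gamma} M)^{\ast}
\]
for every free $\Z\Gamma$-module $Q$. For $Q = \Z\Gamma$ both sides are canonically $X = M^{\ast}$. For a general free module $Q = \bigoplus_J \Z\Gamma$ the left side is the product $\prod_J X$, while $Q \otimes_{\Z\Gamma} M = \bigoplus_J M$ is a direct sum of discrete groups whose Pontrjagin dual is again $\prod_J M^{\ast} = \prod_J X$; these identifications are functorial in $Q$, hence compatible with the differentials. Thus the cochain complex $\Hom_{\Z\Gamma}(Q_{\hullet}, X)$ computing $H^{\ast}(\Gamma, X)$ is precisely the Pontrjagin dual of the chain complex $Q_{\hullet} \otimes_{\Z\Gamma} M$ computing $H_{\ast}(\Gamma, M)$.

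Now $Q_{\hullet} \otimes_{\Z\Gamma} M$ is a complex of discrete abelian groups, and Pontrjagin duality is an exact contravariant functor interchanging discrete and compact groups. Hence it commutes with passage to (co)homology, giving the natural topological isomorphism
\[
H^i(\Gamma, X) \cong H_i(\Gamma, M)^{\ast} \; .
\]
Since $H_i(\Gamma, M)$ is discrete, its dual $H^i(\Gamma, X)$ is compact, which is the first assertion. The remaining statements are then formal: Pontrjagin duality sends $0$ to $0$ and a finite abelian group to a finite group of the same cardinality, while the dual of an infinite discrete group is a non-finite compact group. Therefore $H^i(\Gamma, X)$ is zero (resp. finite) exactly when $H_i(\Gamma, M)$ is, with $|H^i(\Gamma, X)| = |H_i(\Gamma, M)|$ in the finite case. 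The equality $\chi(\Gamma, X) = \chi(\Gamma, M)$, and the equivalence of the two products being well defined, follow by comparing them factor by factor.

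The hard part will be the topological bookkeeping rather than the algebra. One has to verify that the isomorphism $\Hom_{\Z\Gamma}(Q, X) \cong (Q \otimes_{\Z\Gamma} M)^{\ast}$ is a homeomorphism and that the induced topology on $H^i(\Gamma, X)$ is the intended one, i.e. that the cohomology of the complex of compact groups $\Hom_{\Z\Gamma}(Q_{\hullet}, X)$ has Hausdorff (hence compact) subquotients. This is exactly where exactness of duality does the essential work: because it identifies the cohomology with the dual of the discrete group $H_i(\Gamma, M)$, compactness and separation of the cohomology are automatic, and no separate verification that the coboundary images are closed is required.
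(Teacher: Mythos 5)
Your proposal is correct and follows essentially the same route as the paper: resolve $\Z$ by free (right) $\Z\Gamma$-modules, identify $\Hom_{\Z\Gamma}(Q,X)$ with $(Q\otimes_{\Z\Gamma}M)^{\ast}$ termwise (the paper writes this isomorphism explicitly via bilinear maps $L\times M\to S^1$, you reduce to $Q=\Z\Gamma$ and direct sums), and then use exactness of Pontrjagin duality to pass to (co)homology. The remaining counting statements are handled identically.
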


\begin{proof}
For a free right $\Z\Gamma$-module $L$ on a set of generators $S$ the isomorphism
\[
\Hom_{\Z\Gamma} (L,X) = X^S \; , \; \alpha \longmapsto (\alpha (s))_{s \in S}
\]
turns $\Hom_{\Z\Gamma} (L,X)$ into a compact abelian group. The topology is independent of the choice of generators $S$ of $L$. There is a natural isomorphism of compact groups
\begin{equation}
\label{eq:42}
(L \otimes_{\Z\Gamma} M)^{\ast} \silo \Hom_{\Z\Gamma} (L,X) \; .
\end{equation}
It is obtained by sending a bilinear map $\varphi : L \times M \to S^1$ with $\varphi (la , m) = \varphi (l, am)$ for $a \in \Z\Gamma , l \in L , m \in M$ to the $\Z\Gamma$-equivariant map $\Phi : L \to \Hom (M, S^1)$ given by $\Phi (l) (m) = \varphi (l,m)$. Now choose a resolution $\ldots \to L_1 \to L_0 \to \Z \to 0$ of the $\Z\Gamma$-module $\Z$ by free right $\Z\Gamma$-modules $L_i$. Using \eqref{eq:42} we obtain a topological isomorphism of complexes of compact groups
\[
(L_{\hullet} \otimes_{\Z\Gamma} M)^{\ast} \silo \Hom_{\Z\Gamma} (L_{\hullet} , X) \; .
\]
Taking cohomology and using that Pontrjagin duality is an exact functor we obtain the desired topological isomorphism
\[
H_i (\Gamma , M)^{\ast} \silo H^i (\Gamma, X) \; .
\]
\end{proof}

In the situation of formula \eqref{eq:38} we have $c_0 (\Gamma) \otimes_{\Z \Gamma} M_f = 0$ and hence $c_0 (\Gamma)^r = c_0 (\Gamma)^r f$. Thus $f$ has a left inverse and using Theorem \ref{t2.3} it follows that $f \in \GL_r (c_0 (\Gamma))$. We get a short exact sequence
\[
0 \longrightarrow (\Z\Gamma)^r \xrightarrow{f} (\Z\Gamma)^r \longrightarrow M \longrightarrow 0 \; .
\]
Using the isomorphism $\Z \otimes_{\Z\Gamma_n} \Z\Gamma = \Z \Gamma^{(n)}$ we find
\[
H_i (\Gamma_n , M) = H_i ((\Z \Gamma^{(n)})^r \xrightarrow{f^{(n)}} (\Z\Gamma^{(n)})^r)
\]
and therefore $H_i (\Gamma_n , M) = 0$ for $i\ge 2$. Since $f \in \GL_r (c_0 (\Gamma))$ we have $f^{(n)} \in \GL_r (\Q_p \Gamma^{(n)})$. Thus the map $f^{(n)} : \Z \Gamma^{(n)} \to \Z \Gamma^{(n)}$ is injective and hence $H_1 (\Gamma_n , M) = 0$. Using Proposition \ref{t5.2} we conclude that in the situation of \eqref{eq:38} we have $H^i (\Gamma_n ,X) = 0$ for $i \ge 1$ as claimed in \eqref{eq:38}.

\begin{theorem}
\label{t5.3}
Under the conditions on $\Gamma$ and $M$ in Theorem \ref{t4.1} resp. Corollary \ref{t4.4} the $p$-adic entropy \eqref{eq:40} of $X = \aM$
\[
h^{\per}_p := \lim_{n\to\infty} (\Gamma : \Gamma_n)^{-1} \log_p \chi (\Gamma_n , X)\quad \text{for} \; \Gamma_n \to e
\]
is well defined and equal to the $p$-adic $R$-torsion of $M$:
\[
h^{\per}_p = \tau^{\Gamma}_p (M) \; .
\]
\end{theorem}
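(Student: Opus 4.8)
The plan is to reduce the statement directly to the limit formula of Theorem~\ref{t4.1} (resp. Corollary~\ref{t4.4}) by means of the Pontrjagin duality comparison of Proposition~\ref{t5.2}. The key point is that both limit formulas involve the \emph{same} renormalized quantities $(\Gamma:\Gamma_n)^{-1}\log_p\chi(\Gamma_n,-)$, once one knows that $\chi(\Gamma_n,X)=\chi(\Gamma_n,M)$ for every $n$.

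First I would observe that for each fixed $n$ the group $\Gamma_n$ is again a countable discrete group, that $M$ is a discrete $\Z\Gamma_n$-module, and that its compact Pontrjagin dual equipped with the restricted action is exactly $X=\aM$ viewed as a $\Gamma_n$-space. By Theorem~\ref{t2.1} the homology groups $H_i(\Gamma_n,M)$ are finite and vanish for $i>d$, so $\chi(\Gamma_n,M)$ is defined. Applying Proposition~\ref{t5.2} with $\Gamma_n$ in place of $\Gamma$ then shows that each $H^i(\Gamma_n,X)$ is finite, vanishes for $i>d$, and that
\[
\chi(\Gamma_n,X)=\chi(\Gamma_n,M)\qquad\text{for all }n\;.
\]
This is the only place where the dynamical interpretation enters; the verification is routine because Pontrjagin duality is an exact contravariant functor and the duality between the $\Gamma$-actions on $M$ and on $X$ restricts to a duality between the corresponding $\Gamma_n$-actions.

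With this identity in hand I would simply substitute into the definition~\eqref{eq:40} of $h^{\per}_p$ and invoke the limit formula already established, obtaining
\[
h^{\per}_p=\lim_{n\to\infty}(\Gamma:\Gamma_n)^{-1}\log_p\chi(\Gamma_n,X)=\lim_{n\to\infty}(\Gamma:\Gamma_n)^{-1}\log_p\chi(\Gamma_n,M)=\tau^{\Gamma}_p(M)\;.
\]
In particular the existence of the limit defining $h^{\per}_p$ is inherited from Theorem~\ref{t4.1} (resp. Corollary~\ref{t4.4}), so no separate convergence argument is needed.

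I do not expect a genuine obstacle here: the theorem is a formal consequence of combining the arithmetic limit formula of~\S\,\ref{sec:4} with the duality comparison of Proposition~\ref{t5.2}. The only point deserving a word of care is the compatibility of Proposition~\ref{t5.2} with passage to the finite-index subgroups $\Gamma_n$, which amounts to the harmless remark that forming the Pontrjagin dual $\aM$ commutes with restricting the acting group from $\Gamma$ to $\Gamma_n$.
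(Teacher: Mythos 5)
Your proof is correct and is essentially the paper's own argument: the paper disposes of Theorem \ref{t5.3} in one line as ``a reformulation of Theorem \ref{t4.1} and Corollary \ref{t4.4} taking into account Proposition \ref{t5.2}'', which is exactly the reduction you carry out. Your added remark that Proposition \ref{t5.2} must be applied with $\Gamma_n$ in place of $\Gamma$ (and that Pontrjagin duality commutes with restriction of the acting group) is a correct and harmless elaboration of the same step.
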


This is a reformulation of Theorem \ref{t4.1} and Corollary \ref{t4.4} taking into account Proposition \ref{t5.2}. It was motivated by an analogous (and deeper) result of Li and Thom \cite[Theorem 1.1]{LT}, who express classical entropy of algebraic dynamical systems of type $FL$ for amenable groups as an $L^2$-torsion. 

In the rest of this section we give a review of Br\"auer's definition of $\log_p \det_{\Gamma} (M)$ for $\Gamma = \Z^N$ and show that it agrees with $\tau^{\Gamma}_p (M)$.

Set $R = \Z [\Z^N]$ and let $\Mh_{S_p} (R)$ be the category of finitely generated $R$-modules which are $S_p = R \cap c_0 (\Z^N)^{\times}$-torsion. According to Theorem \ref{t4.5} these are exactly the $p$-adically expansive $R$-modules. This is an exact subcategory of the category of all $R$-modules and one has a localization sequence \cite[IX Theorem 6.3 and Corollary 6.4]{Ba}
\begin{equation}
\label{eq:42a}
R^{\times} = K_1 (R) \longrightarrow K_1 (R [S^{-1}_p]) \xrightarrow{\delta} K_0 (\Mh_{S_p} (R)) \longrightarrow K_0 (R) \xrightarrow{i} K_0 (R [S^{-1}_p]) \longrightarrow 0 \; .
\end{equation}
The map $i$ is injective since $K_0 (R) = K_0 (\Z) = \Z$ and since $K_0 (R [S^{-1}_p])$ surjects onto $\Z$ via the rank. Hence $\delta$ induces an isomorphism
\[
\overline{\delta} : K_1 (R [S^{-1}_p]) / R^{\times} \silo K_0 (\Mh_{S_p} (R)) \; ,
\]
whose inverse is denoted by $\cl_p$. Noting that $R^{\times} = \pm \Z^N$, we have:
\begin{equation}
\label{eq:43}
K_1 (c_0 (\Z^N)) / R^{\times} = K_T (c_0 (\Z^N)) \; .
\end{equation}
Following \cite[4.3]{B}, we may therefore consider the composition
\begin{equation}
\label{eq:44}
\log_p \ddet_{\Z^N} : K_0 (\Mh_{S_p} (R)) \overset{\cl_p}{\silo} K_1 (R [S^{-1}_p])/ R^{\times} \longrightarrow K_T (c_0 (\Z^N)) \xrightarrow{\log_p \ddet_{\Z^N}} \Q_p \; .
\end{equation}
Here the last map is defined by equation \eqref{eq:17}.

\begin{prop}
\label{t5.4}
Set $\Gamma = \Z^N$. For any $p$-adically expansive $R = \Z\Gamma$-module $M$ we have:
\[
\log_p \ddet_{\Gamma} [M] = \tau^{\Gamma}_p (M) \; .
\]
\end{prop}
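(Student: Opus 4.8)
The plan is to unwind Br\"auer's definition \eqref{eq:44} of $\log_p\ddet_\Gamma[M]$ and to identify the class $\cl_p([M])$ explicitly with the Whitehead torsion of a localized free resolution, which is precisely the torsion class entering the definition of $\tau^\Gamma_p(M)$. Throughout I set $R=\Z\Gamma$ with $\Gamma=\Z^N$ and recall from Remark~\ref{t4.4a} that every $p$-adically expansive $R$-module $M$ is finitely generated, admits a finite free resolution $F_\hullet\to M$ as in \eqref{eq:18a}, and automatically satisfies $\varprojlim_i M_{p^i}=0$; by Theorem~\ref{t4.5} these are exactly the objects of $\Mh_{S_p}(R)$. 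First I would record that both sides are additive in short exact sequences of such modules, so that each defines a homomorphism $K_0(\Mh_{S_p}(R))\to\Q_p$: the right-hand side by \eqref{eq:23a}, the left-hand side because $[M]\mapsto\log_p\ddet_\Gamma[M]$ is a composite of the group homomorphisms in \eqref{eq:44}. It therefore suffices to compare them on $K_0$-classes.

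The heart of the matter is the identity
\[
\cl_p([M]) = \tau(F_\hullet[S_p^{-1}]) \bmod R^\times \quad\text{in}\;\; K_1(R[S_p^{-1}])/R^\times,
\]
where $\tau(F_\hullet[S_p^{-1}])\in K_1(R[S_p^{-1}])$ is the torsion of the complex obtained from a based free resolution $F_\hullet\to M$ by inverting $S_p$ (acyclic because $M[S_p^{-1}]=0$). Equivalently, I must check that the boundary map $\delta$ of the localization sequence \eqref{eq:42a} sends this torsion to $[M]=\sum_i(-1)^i[H_i(F_\hullet)]$. Since $K_1(\Z[\Z^N])=R^\times=\pm\Z^N$, the ambiguity in $\tau(F_\hullet[S_p^{-1}])$ coming from different $R$-bases of the $F_\nu$ lies in the image of $K_1(R)$ and disappears modulo $R^\times$, so the displayed class is well defined.

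To prove the boundary-map identity I would invoke the standard compatibility of the connecting homomorphism in the $K$-theory localization sequence with Whitehead torsion: for a bounded complex of finitely generated free $R$-modules whose localization at $S_p$ is acyclic, $\delta$ applied to the torsion of the localized complex equals the alternating sum of the classes of its homology modules in $K_0(\Mh_{S_p}(R))$ (see \cite[IX]{Ba}, \cite[\S\S3,4]{Mi}). For a resolution $F_\hullet\to M$ only $H_0=M$ survives, giving $\delta(\tau(F_\hullet[S_p^{-1}]))=[M]$, that is, $\cl_p([M])=\tau(F_\hullet[S_p^{-1}])\bmod R^\times$. The elementary instance that already pins down the normalization is a length-one resolution $0\to R^r\xrightarrow{\partial}R^r\to M\to 0$, where $\tau(F_\hullet[S_p^{-1}])=[\partial]$ and $\delta$ sends $[\partial]$ to $[\coker\partial]=[M]$; the general case follows by additivity in $K_0$ together with multiplicativity of torsion \eqref{eq:19a}.

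Finally, since $S_p=R\cap c_0(\Gamma)^\times$, the inclusion $R\hookrightarrow c_0(\Gamma)$ extends to $R[S_p^{-1}]\hookrightarrow c_0(\Gamma)$, inducing the map $K_1(R[S_p^{-1}])/R^\times\to K_T(c_0(\Gamma))$ of \eqref{eq:44}. By functoriality of Whitehead torsion under base change, this map carries $\tau(F_\hullet[S_p^{-1}])$ to the torsion $\tau(c_0(\Gamma)\otimes_{R}F_\hullet,\,1\otimes\eb_\hullet)\in K_T(c_0(\Gamma))$ of the acyclic complex $c_0(\Gamma)\otimes_R F_\hullet$, which is exactly the class used to define $\tau^\Gamma_p(M)$ in the $p$-adically expansive case (Theorem~\ref{t4.3}, with $\log_p\ddet_\Gamma$ given by \eqref{eq:17}). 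Applying the homomorphism $\log_p\ddet_\Gamma\colon K_T(c_0(\Gamma))\to\Q_p$ to both representatives then yields
\[
\log_p\ddet_\Gamma[M] = \log_p\ddet_\Gamma\bigl(\tau(c_0(\Gamma)\otimes_R F_\hullet)\bigr) = \tau^\Gamma_p(M),
\]
the independence of the chosen bases being guaranteed because $\log_p\ddet_\Gamma$ vanishes on $\GL_r(\Z\Gamma)$ by Corollary~\ref{t3.3} (cf. Remark~\ref{t3.5}). I expect the boundary-map--torsion compatibility to be the only genuinely delicate point; everything else is functoriality of torsion and the additivity already established in the paper.
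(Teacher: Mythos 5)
Your proposal is correct and follows essentially the same route as the paper: both arguments rest on the additivity of the two sides on $K_0 (\Mh_{S_p} (R))$ together with the computation $\cl_p [M_f] = [f]$ and $\tau^{\Gamma}_p (M_f) = \log_p \ddet_{\Gamma} f$ for square presentations $M_f = R^r / R^r f$. The only difference is one of packaging: the paper sidesteps the general boundary-map/torsion compatibility you invoke (your one ``delicate point'') by using the explicit formula $\delta (f) = [R^r / R^r g] - [R^r / R^r s]$ and the surjectivity of $\delta$ to reduce at once to $M = M_f$, where both sides equal $\log_p \ddet_{\Gamma} f$ essentially by definition.
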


\begin{proof}
We recall the definition of $\delta$ in \eqref{eq:42a}. For $f \in \GL_r (R [S^{-1}_p])$ choose a common denominator $s \in S_p$ for the entries of $f$ and set $g = sf \in M_r (R) \cap \GL_r (R [S^{-1}_p]) \subset M_r (R) \cap \GL_r (c_0 (\Gamma))$. Then $R^r / R^r g$ and $R^r / R^rs$ are both $S_p$-torsion. Namely choose $t \in S$ with $tf^{-1} \in M_r (R)$. Then $st \in S$ annihilates $R^r / R^r g$. We have
\[
\delta (f) = [R^r / R^r g] - [R^r / R^r s] \quad \text{in} \; K_0 (\Mh_{S_p} (R)) \; .
\]
Both $\log_p \det_{\Gamma}$ and $\tau^{\Gamma}_p$ are additive on $K_0 (\Mh_{S_p} (R))$ i.e. on short exact sequences of $p$-adically expansive $R$-modules. Hence it suffices to check equality in Proposition \ref{t5.4} for the modules $M_f = R^r / R^r f$ with $f \in M_r (R) \cap \GL_r (R [S^{-1}_p])$. For them we have $\cl_p [M_f] = [f]$ and hence
\[
\log_p \ddet_{\Gamma} [M_f] = \log_p \ddet_{\Gamma} f \; .
\]
On the other hand it is immediate from the definition of $\tau^{\Gamma}_p$ that we have
\[
\tau^{\Gamma}_p (M_f) = \log_p \ddet_{\Gamma} f \; .
\]
\end{proof}
\section{Euler characteristics and intersection theory} \label{sec:6}
In this section we set $\Gamma = \Z^N$ so that $\Z \Gamma = \Z [t^{\pm 1}_1 , \ldots , t^{\pm 1}_N]$ and we denote by $\Delta \subset \Gamma$ a subgroup of finite index with finite quotient group $G$. For suitable $\Z\Gamma$-modules $M$ we will express the multiplicative Euler-characteristic $\chi (\Delta , M)$ in terms of Serre's intersection numbers \cite{S}.

We call a $\Z\Gamma$-module $M$ expansive if it is finitely generated and if $L^1 \Gamma \otimes_{\Z\Gamma} M = 0$. This is equivalent to the algebraic dynamical system $X = \aM$ being expansive. Recall that the $\Z\Gamma$-module $M$ is $p$-adically expansive if and only if it is finitely generated and $c_0 (\Gamma) \otimes_{\Z\Gamma} M = 0$.

\begin{prop} \label{t6.1}
Let $M$ be an expansive or $p$-adically expansive $\Z\Gamma$-module. Then $\Z G \otimes_{\Z\Gamma} M$ is finite.
\end{prop}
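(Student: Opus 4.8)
The plan is to show that $\Z G \otimes_{\Z\Gamma} M$ is a finitely generated \emph{torsion} $\Z$-module; since a finitely generated $\Z$-module is finite exactly when it is torsion, this suffices. Finite generation over $\Z$ is immediate: choosing a surjection $(\Z\Gamma)^k \twoheadrightarrow M$ and using right exactness of $\Z G \otimes_{\Z\Gamma} (-)$ exhibits $\Z G \otimes_{\Z\Gamma} M$ as a quotient of $(\Z G)^k$, which is finitely generated over $\Z$ because $G$ is finite. So the entire content is to prove that $\Z G \otimes_{\Z\Gamma} M$ is torsion, i.e. that $\Q \otimes_{\Z} (\Z G \otimes_{\Z\Gamma} M) = 0$.

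To detect this vanishing I would base change to an algebraically closed field extension $K$ of $\Q$, taking $K = \C$ in the expansive case and $K = \oQ_p$ in the $p$-adically expansive case. Writing $M_K = M \otimes_{\Z} K$, one has $K \otimes_{\Z} (\Z G \otimes_{\Z\Gamma} M) = KG \otimes_{K\Gamma} M_K$, and since $K$ is a field extension of $\Q$ it suffices to prove that this vanishes. Now $G$ is a finite abelian group and $K$ has characteristic zero and contains all roots of unity, so the group algebra splits completely, $KG = \prod_{\chi} K_{\chi}$, the product running over $\chi \in \Hom(G, K^{\times})$, where $K_{\chi}$ is $K$ with $K\Gamma = K[t_1^{\pm 1}, \ldots, t_N^{\pm 1}]$ acting through the composite $K\Gamma \to KG \xrightarrow{\chi} K$. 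This composite is evaluation at the point $z_{\chi} = (\chi(\bar t_1), \ldots, \chi(\bar t_N))$, where $\bar t_i$ denotes the image in $G$ of the $i$-th generator of $\Gamma$. Consequently $KG \otimes_{K\Gamma} M_K = \prod_{\chi} M_K / \emm_{z_{\chi}} M_K$, where $\emm_{z_{\chi}}$ is the maximal ideal of $K\Gamma$ at $z_{\chi}$, and by Nakayama each factor $M_K / \emm_{z_{\chi}} M_K$ vanishes exactly when $z_{\chi} \notin (\supp M)(K)$.

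The key geometric observation is that each coordinate $\chi(\bar t_i)$ is the value of a character of a finite group on an element of finite order, hence a root of unity of absolute value $1$; thus every $z_{\chi}$ lies on the relevant unit torus, namely $z_{\chi} \in T^N$ in the expansive case and $z_{\chi} \in T^N_p$ in the $p$-adically expansive case. In the expansive case $T^N \cap (\supp M)(\C) = \emptyset$ by the characterization \eqref{eq:32}, while in the $p$-adically expansive case $T^N_p \cap (\supp M)(\oQ_p) = \emptyset$ by Theorem \ref{t4.5} (condition \eqref{eq:31}). In either case no $z_{\chi}$ meets $\supp M$, so every fiber vanishes, hence $KG \otimes_{K\Gamma} M_K = 0$, which yields the desired torsion statement and thus the finiteness of $\Z G \otimes_{\Z\Gamma} M$.

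The main point to get right — and the only step that is more than bookkeeping — is the identification of $KG \otimes_{K\Gamma} M_K$ with the product of the fibers of $M$ at the character points $z_{\chi}$, together with the remark that these points always lie on the unit torus; once that is in place the two expansiveness hypotheses apply directly and symmetrically. I would also note that in the $p$-adically expansive case there is a shortcut that avoids base change entirely: for $\Gamma = \Z^N$ the module $M$ is of type $FL$ and satisfies $\varprojlim_i M_{p^i} = 0$ by Remark \ref{t4.4a}, so Theorem \ref{t2.1} applied to $\Delta$ already shows that all $H_i(\Delta, M)$ are finite, and $\Z G \otimes_{\Z\Gamma} M = H_0(\Delta, M)$.
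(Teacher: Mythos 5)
Your proof is correct, but it follows a genuinely different route from the one the paper gives. The paper's proof is purely algebraic and very short: after the same first reduction (that $A = \Z G \otimes_{\Z\Gamma} M$ is a finitely generated abelian group, so it suffices to show it is torsion), it observes that the surjective ring homomorphisms $L^1\Gamma \twoheadrightarrow L^1 G = \C G$ and $c_0(\Gamma) \twoheadrightarrow c_0(G) = \Q_p G$ induce surjections $L^1\Gamma \otimes_{\Z\Gamma} M \twoheadrightarrow \C \otimes_{\Z} A$ and $c_0(\Gamma) \otimes_{\Z\Gamma} M \twoheadrightarrow \Q_p \otimes_{\Z} A$, so the defining conditions $L^1\Gamma \otimes_{\Z\Gamma} M = 0$, resp. $c_0(\Gamma) \otimes_{\Z\Gamma} M = 0$, immediately force $A$ to be torsion. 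This costs nothing beyond the definitions and would work verbatim for any group $\Gamma$ with finite quotient $G$. Your argument instead invokes the geometric characterizations of expansiveness --- \eqref{eq:32} in the archimedean case and Theorem \ref{t4.5} (condition \eqref{eq:31}) in the $p$-adic case --- decomposes $KG$ into character fibers, and checks that the character points lie on the unit torus and hence miss $\supp M$. That is exactly the ``more geometric proof'' the paper itself points out in the Remark following Proposition \ref{t6.2} (where the character points are packaged as $\spec \Z G$ inside $\G$), so you have independently reconstructed that alternative; its advantage is that it makes visible \emph{where} on $\G$ the obstruction to finiteness would live, which is the viewpoint needed for the intersection-theoretic refinement in Proposition \ref{t6.2} and Corollary \ref{t6.3}, but it is specific to $\Gamma = \Z^N$ and relies on the nontrivial equivalences behind \eqref{eq:31} and \eqref{eq:32}, whereas the paper's proof needs neither. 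Your closing shortcut via Theorem \ref{t2.1} and Remark \ref{t4.4a} in the $p$-adic case is also valid.
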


\begin{proof}
Since $G$ is finite, the finitely generated $\Z G$-module $A = \Z G \otimes_{\Z\Gamma} M$ is finitely generated as an abelian group. The natural surjective ring homomorphisms
\[
L^1 \Gamma \twoheadrightarrow L^1 G = \C G \quad \text{and} \quad c_0 (\Gamma) \longrightarrow c_0 (G) = \Q_p G
\]
therefore induce surjections\[
L^1 \Gamma \otimes_{\Z\Gamma} M \twoheadrightarrow \C \otimes_{\Z} A \quad \text{and} \quad c_0 (\Gamma) \otimes_{\Z\Gamma} M \twoheadrightarrow \Q_p \otimes_{\Z} A \; .
\]
Thus ($p$-adic) expansiveness implies that $A$ is a torsion group, hence finite. 
\end{proof}

Set $\bbmu = \spec \Z G$ viewed as a closed subgroup scheme of $\G = \spec \Z\Gamma = \G^N_{m,\Z}$. The subgroup $\bbmu (\oQ) \subset \G (\oQ) = (\oQ^{\times})^N$ is finite and hence contained in $\mu (\oQ)^N$ where $\mu (\oQ)$ is the group of roots of unity in $\oQ^{\times}$. For $\Delta = n \Gamma = (n \Z)^N$ for example, we have $G = (\Z / n)^N$ and hence $\bbmu = \mu^N_{n, \Z}$ where $\mu_{n, \Z}$ is the group-scheme over $\Z$ of $n$-th roots of unity. Recall the affine scheme $\supp M = \spec (\Z \Gamma / \Ann (M))$.

\begin{prop}
\label{t6.2}
Let $M$ be a finitely generated $\Z\Gamma$-module. The group $ \Z G \otimes_{\Z\Gamma} M$ is finite if and only if $\bbmu (\oQ) \cap (\supp M) (\oQ) = \emptyset$. In this case the groups $\Tor^{\Z \Gamma}_i (\Z G, M)$ are finite as well and viewing them as $\Z\Gamma$--modules we have
\[
\Tor^{\Z\Gamma}_i (\Z G , M) = \bigoplus_{\emm} \Tor^{(\Z\Gamma)_{\emm}}_i ((\Z G)_{\emm} , M_{\emm}) \; .
\]
Here $\emm$ runs over the finitely many (closed) points of $\bbmu \cap \supp M$ in $\G$. 
\end{prop}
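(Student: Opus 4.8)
The plan is to translate everything into the geometry of the closed subscheme $\bbmu \cap \supp M$ of $\G = \spec \Z\Gamma$ and to exploit that $\bbmu = \spec \Z G$ is finite flat over $\spec \Z$. First I would write $\Z G = \Z\Gamma / J$ for the kernel $J$ of the surjection $\Z\Gamma \to \Z G$ induced by $\Gamma \to G$, so that $\Z G \otimes_{\Z\Gamma} M = M/JM$ and, for finitely generated modules over the Noetherian ring $\Z\Gamma$, one has $\supp (M/JM) = \supp M \cap V(J) = \supp M \cap \bbmu$ set-theoretically. The whole statement will then be read off from the structure of this intersection.

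The first lemma I would record is that a finitely generated $\Z\Gamma$-module $A$ is finite as an abelian group if and only if $\dim \supp A \le 0$. Indeed $\spec \Z\Gamma$ is a Jacobson scheme whose closed points have finite residue fields, so $A$ has zero-dimensional support exactly when it has finite length, and finite length together with finiteness of those residue fields forces $|A| < \infty$. Applying this to $A = M/JM$ reduces the equivalence in the proposition to the purely dimension-theoretic assertion
\[
\dim(\supp M \cap \bbmu) \le 0 \iff \bbmu(\oQ) \cap (\supp M)(\oQ) = \emptyset \; .
\]

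To prove this assertion I would use that $\Z G$ is free of finite rank over $\Z$, hence $\Z$-torsion free; therefore no minimal prime of $\Z G$ contains a rational prime $p$, so every irreducible component of $\bbmu$ is horizontal, i.e. dominates $\spec \Z$ with generic point $\eta$ lying in the generic fibre $\spec \Q G$, and each such component has dimension exactly one. Consequently any positive-dimensional irreducible closed subset of $\supp M \cap \bbmu$ is one of these components, and it is contained in the closed set $\supp M$ precisely when its generic point $\eta$ is. On the other hand the $\oQ$-points of $\bbmu$ are exactly the characters $G \to \oQ^{\times}$; each factors through the generic fibre and maps to some generic point $\eta$ of a component, and a geometric point above $\eta$ lies in $(\supp M)(\oQ)$ if and only if $\eta \in \supp M$. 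Combining these two observations gives both implications: a common $\oQ$-point produces a one-dimensional horizontal component of $\supp M \cap \bbmu$, and conversely any such component is detected by a root-of-unity point in characteristic zero.

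Finally, in the case where $\supp M \cap \bbmu$ is finite, each $\Tor^{\Z\Gamma}_i(\Z G, M)$ is a finitely generated $\Z\Gamma$-module supported on $\supp \Z G \cap \supp M = \bbmu \cap \supp M$, hence of zero-dimensional support and therefore finite by the first lemma. The displayed decomposition is then the standard fact that a finite length module over a Noetherian ring is the direct sum of its localizations at the finitely many (closed) points of its support, combined with the compatibility of $\Tor$ with the flat localizations $\Z\Gamma \to (\Z\Gamma)_{\emm}$, which identifies $\big(\Tor^{\Z\Gamma}_i(\Z G, M)\big)_{\emm}$ with $\Tor^{(\Z\Gamma)_{\emm}}_i((\Z G)_{\emm}, M_{\emm})$; terms attached to points outside $\supp M$ vanish, so summing over all of $\bbmu \cap \supp M$ causes no harm. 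I expect the main obstacle to be the dimension-theoretic equivalence, and in particular making airtight the passage between ``finite as an abelian group,'' ``zero-dimensional support,'' and ``absence of a common $\oQ$-point'' — the crux being that every positive-dimensional component of $\bbmu$ is horizontal and hence visible already in the characteristic-zero fibre.
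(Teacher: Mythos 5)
Your proposal is correct, and both of your main steps hold up; but your route to the equivalence differs from the paper's in a way worth noting. The paper handles the first assertion with a one-line observation: since $G$ is finite, $\Z G\otimes_{\Z\Gamma}M$ is a finitely generated abelian group, so it is finite if and only if $\Q G\otimes_{\Z\Gamma}M=0$, i.e.\ if and only if $\supp(\Q G\otimes_{\Z\Gamma}M)=\bbmu_{\Q}\cap(\supp M)_{\Q}$ is empty, which is checked on $\oQ$-points. You instead prove the lemma ``finitely generated $\Z\Gamma$-module finite $\iff$ zero-dimensional support'' (via the Jacobson property and finiteness of residue fields at closed points of $\G$) and then show $\dim(\bbmu\cap\supp M)\le 0$ is equivalent to the absence of a common $\oQ$-point by analyzing the horizontal one-dimensional components of $\bbmu$. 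That is longer but buys you the finiteness of the $\Tor$-groups for free, since $\supp\Tor^{\Z\Gamma}_i(\Z G,M)\subset\bbmu\cap\supp M$ is then zero-dimensional; the paper gets the same finiteness by noting that $\Z\Gamma$ acts on these $\Tor$'s through the quotient $\oR=\Z\Gamma/(I+\Ann(M))$, which is a finite ring because $\spec\oR$ is finite over $\spec\Z$ with empty generic fibre. For the direct sum decomposition the two arguments are essentially the same fact in different clothing: the paper writes $\oR=\prod_{\emm}\oR_{\emm}$ by the structure theorem for Artinian rings and tensors, while you invoke the decomposition of a finite-length module as the direct sum of its localizations together with compatibility of $\Tor$ with the flat localizations $\Z\Gamma\to(\Z\Gamma)_{\emm}$; your remark that the terms at points of $\bbmu\cap\supp M$ outside $\supp\Tor_i$ vanish correctly accounts for indexing the sum over all of $\bbmu\cap\supp M$.
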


\begin{proof}
The group $\Z G \otimes_{\Z\Gamma} M$ is finitely generated as a $\Z G$-module and hence as a $\Z$-module. Hence it is finite if and only if $\Q G \otimes_{\Z\Gamma} M = 0$ or equivalently $S := \supp \Q G \otimes_{\Z\Gamma} M = \emptyset$. We have
\[
S = \bbmu_{\Q} \cap (\supp M)_{\Q} \quad \text{in} \; \G_{\Q} = \G \otimes_{\Z} \Q \; ,
\]
and this is empty if and only if
\[
S (\oQ) = \bbmu (\oQ) \cap (\supp M) (\oQ)
\]
is empty. Set $I = \Ker (\Z \Gamma \to \Z G)$. The ring $\Z\Gamma$ acts on $\Tor^{\Z \Gamma}_i (\Z G, M)$ via its quotient 
\[
\oR = \Z G \otimes_{\Z\Gamma} (\Z \Gamma / \Ann (M)) = \Z \Gamma / (I + \Ann (M)) \; .
\]
Since $\spec \oR = \bbmu \cap \supp M$ is finite over $\spec \Z$ and its generic fibre is empty by assumption, $\spec \oR$ is $0$-dimensional and consists of finitely many closed points. They correspond to the maximal ideals $\emm$ in $\Z\Gamma$ containing both $I$ and $\Ann (M)$. By the structure theorem for Artin algebras, we have
\[
\oR = \prod_{\emm} \oR_{\emm} \; .
\]
Since localization is exact, we get
\[
\Tor^{\Z\Gamma}_i (\Z G , M) = \Tor^{\Z \Gamma}_i (\Z G , M) \otimes_R \oR = \bigoplus_{\emm} \Tor^{(\Z\Gamma)_{\emm}}_i ((\Z G)_{\emm} , M_{\emm}) \; .
\]
The $\Tor$-groups are finite since they are finitely generated $\oR$-modules and the ring $\oR$ is finite. 
\end{proof}

\begin{rem}
Choosing embeddings $\oQ \subset \oQ_p$ and $\oQ \subset \C$ we have $\bbmu (\oQ) = \bbmu (\oQ_p) \subset T^N_p$ and $\mu (\oQ) = \mu (\C) \subset T^N$. Using the characterizations \eqref{eq:31} resp. \eqref{eq:32} of ($p$-adic) expansiveness, the first part of Proposition \ref{t6.2} gives a more geometric proof of Proposition \ref{t6.1}.
\end{rem}

In \cite{S} Serre developed a theory of local intersection numbers using higher $\Tor$'s. We review the main definitions and results.

Let $A$ be a regular local ring and $M_1 , M_2$ finite $A$-modules such that the length of $M_1 \otimes_A M_2$ is finite. Then the intersection multiplicity
\[
i (M_1 , M_2 ; A) = \sum^{\dim A}_{i=0} (-1)^i l_A (\Tor^A_i (M_1 , M_2))
\]
is well defined, and moreover $\dim M_1 + \dim M_2 \le \dim A$ where $\dim M = \dim\supp M$. Roberts \cite{R} and Gillet--Soul\'e \cite{GS} showed that we have
\begin{equation}
\label{eq:45}
i_A (M_1 , M_2 ; A) = 0 \quad \text{if} \; \dim M_1 + \dim M_2 < \dim A \; .
\end{equation}
Gabber proved non-negativity:
\begin{equation}
\label{eq:46}
i_A (M_1 , M_2 ; A) \ge 0 \; .
\end{equation}
Positivity of $i_A (M_1 , M_2 ; A)$ for $\dim M_1 + \dim M_2 = \dim A$ is not yet known in general. All these properties were conjectured by Serre and proven by him in several special cases, in particular for equicharacteristic or unramified local rings $A$.

\begin{cor}
\label{t6.3}
Let $M$ be a finitely generated $\Z\Gamma$-module, $\Gamma = \Z^N$ and let $\Delta \subset \Gamma$ be a subgroup of finite index with quotient group $G$. Assume that $\bbmu (\oQ) \cap (\supp M) (\oQ) = \emptyset$ where $\bbmu = \spec \Z G \subset \G$ or equivalently that $\Z G \otimes_{\Z\Gamma} M$ is finite. Then we have:
\begin{equation}
\label{eq:47}
\chi (\Delta , M) = \prod_{\emm} N \emm^{i ((\Z G)_{\emm} , M_{\emm} ; (\Z\Gamma)_{\emm})} \; .
\end{equation}
Here $\emm$ runs over those maximal ideals in $\bbmu \cap \supp M$ for which $\dim M_{\emm} = N$ and we write $N \emm = |\Z \Gamma / \emm|$. Moreover $\chi (\Delta , M) \ge 1$ is an integer.
\end{cor}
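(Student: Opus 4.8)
The plan is to combine the local decomposition of Proposition \ref{t6.2} with a passage from cardinalities to lengths, and then to feed the result into Serre's theory. First I would recall the change-of-rings identity $H_i(\Delta, M) = \Tor^{\Z\Gamma}_i(\Z G, M)$: writing $\Z G = \Z[\Gamma/\Delta] = \Z \otimes_{\Z\Delta} \Z\Gamma$ and using that $\Z\Gamma$ is free, hence flat, over $\Z\Delta$, flat base change for $\Tor$ gives $\Tor^{\Z\Gamma}_i(\Z \otimes_{\Z\Delta}\Z\Gamma, M) = \Tor^{\Z\Delta}_i(\Z, M) = H_i(\Delta, M)$. By hypothesis $\Z G \otimes_{\Z\Gamma} M$ is finite, so Proposition \ref{t6.2} applies and yields finiteness of all $H_i(\Delta, M)$ together with the decomposition $\Tor^{\Z\Gamma}_i(\Z G, M) = \bigoplus_\emm \Tor^{(\Z\Gamma)_\emm}_i((\Z G)_\emm, M_\emm)$, the sum running over the finitely many closed points $\emm$ of $\bbmu \cap \supp M$.

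The key elementary step is the conversion of cardinality into length. Each local group $T_{i,\emm} := \Tor^{(\Z\Gamma)_\emm}_i((\Z G)_\emm, M_\emm)$ is finite, hence of finite length over the local ring $A_\emm := (\Z\Gamma)_\emm$, whose residue field $\Z\Gamma/\emm$ is finite of cardinality $N\emm$; a composition series then gives $|T_{i,\emm}| = N\emm^{l_{A_\emm}(T_{i,\emm})}$. Substituting into the definition of $\chi(\Delta, M)$ and collecting exponents by $\emm$, I obtain
\[
\chi(\Delta, M) = \prod_\emm \prod_i N\emm^{(-1)^i l_{A_\emm}(T_{i,\emm})} = \prod_\emm N\emm^{i((\Z G)_\emm, M_\emm; A_\emm)},
\]
since the alternating sum of lengths of the local $\Tor$'s is exactly Serre's intersection multiplicity (the sum is finite because $A_\emm$ is regular, so the $\Tor$'s vanish beyond degree $\dim A_\emm$).

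It remains to cut the product down to the primes with $\dim M_\emm = N$ and to settle integrality. Here I would pin down the two local dimensions: since $\emm$ is a closed point of $\Z\Gamma = \Z[t^{\pm 1}_1, \ldots, t^{\pm 1}_N]$, the dimension formula for domains finitely generated over $\Z$ gives $\dim A_\emm = \tht(\emm) = N+1$; and since $\bbmu = \spec \Z G$ is finite and free over $\spec\Z$, it is equidimensional of dimension one, so $\dim (\Z G)_\emm = 1$ at each such $\emm$. Serre's vanishing \eqref{eq:45} then forces $i((\Z G)_\emm, M_\emm; A_\emm) = 0$ whenever $\dim M_\emm + 1 < N+1$, i.e.\ whenever $\dim M_\emm < N$ (while the inequality $\dim M_\emm + \dim(\Z G)_\emm \le \dim A_\emm$ shows $\dim M_\emm \le N$), leaving exactly the primes with $\dim M_\emm = N$. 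Finally Gabber's non-negativity \eqref{eq:46} makes every exponent a non-negative integer, and as $N\emm \ge 2$ each factor is an integer $\ge 1$, so the product $\chi(\Delta, M)$ is an integer $\ge 1$. The main obstacle is not any single computation but correctly pinning down the two local dimensions --- that $A_\emm$ has dimension $N+1$ and that $\bbmu$ contributes dimension exactly one at every relevant point --- so that Serre's dimension inequality and vanishing translate precisely into the stated condition $\dim M_\emm = N$; everything else is bookkeeping layered on top of the cited deep theorems.
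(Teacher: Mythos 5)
Your proposal is correct and follows essentially the same route as the paper: change of rings to identify $H_i(\Delta,M)$ with $\Tor^{\Z\Gamma}_i(\Z G,M)$, the local decomposition of Proposition \ref{t6.2}, the two dimension computations $\dim(\Z\Gamma)_{\emm}=N+1$ and $\dim(\Z G)_{\emm}=1$, and then Serre's vanishing \eqref{eq:45} and Gabber's non-negativity \eqref{eq:46}. The only difference is that you spell out the cardinality-to-length conversion $|T|=N\emm^{\,l_{A_{\emm}}(T)}$, which the paper leaves implicit.
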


\begin{proof}
Let $F_{\hullet} \to M$ be a free resolution of the $\Z\Gamma$-module $M$. It is also a free resolution by $\Z\Delta$-modules. We have
\[
\Z G \otimes_{\Z\Gamma} F_{\hullet} = \Z \otimes_{\Z\Delta} F_{\hullet}
\]
canonically as complexes and therefore
\[
\Tor^{\Z\Gamma}_i (\Z G , M) = \Tor^{\Z\Delta}_i (\Z , M) = H_i (\Delta , M) \; .
\]
Formula \eqref{eq:47} now follows from Proposition \ref{t6.2} and assertions \eqref{eq:45}, \eqref{eq:46} noting that $\dim (\Z\Gamma)_{\emm} = N + 1$ for the maximal ideals $\emm$ of $\Z\Gamma$ and $\dim (\Z G)_{\emm} = 1$ for the closed points $\emm$ of $\bbmu$. By Hilbert's weak Nullstellensatz the field $\Z \Gamma / \emm$ is finite for every maximal ideal $\emm$ and hence $N \emm$ is defined. 
\end{proof}

\begin{rem}
Note that the conditions on $M$ in Corollary \ref{t6.3} are satisfied if $M$ is ($p$-adically) expansive. 
\end{rem}

For a finitely generated $\Z\Gamma$-module $M$ there is a filtration by submodules $0 = M_0 \subset M_1 \subset \ldots \subset M_n = M$ such that for $1 \le i \le n$ we have $M_i / M_{i-1} \cong \Z\Gamma / \ep_i$ with $\ep_i \in \supp M$. If $\bbmu (\oQ) \cap (\supp M) (\oQ) = \emptyset$ then $\bbmu (\oQ) \cap \supp (\Z \Gamma / \ep_i) (\oQ) = \emptyset$ as well, since
\[
\supp \Z \Gamma / \ep_i = V (\ep_i) \subset \supp M \; .
\]
Hence, if $\chi (\Delta , M)$ is defined, $\chi (\Delta , \Z\Gamma / \ep_i)$ is defined for all $i$ as well and we have
\begin{equation}
\label{eq:48}
\chi (\Delta , M) = \prod^n_{i=1} \chi (\Delta , \Z \Gamma / \ep_i)
\end{equation}
since $\chi (\Delta , \_) $ is multiplicative in short exact sequences.

\begin{prop}
\label{t6.4}
Let $\ep$ be a prime ideal in $\Z\Gamma$ with $\bbmu (\oQ) \cap V (\ep) (\oQ) = \emptyset$. If $\ep$ is not principal, then $\chi (\Delta , \Z \Gamma / \ep) = 1$. 
\end{prop}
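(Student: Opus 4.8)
The plan is to feed $M = \Z\Gamma/\ep$ into Corollary \ref{t6.3}, whose hypothesis $\bbmu(\oQ) \cap V(\ep)(\oQ) = \emptyset$ is exactly the standing assumption, and to show that the product in \eqref{eq:47} is empty. By that corollary, $\chi(\Delta, \Z\Gamma/\ep)$ equals a product over the finitely many maximal ideals $\emm \in \bbmu \cap V(\ep)$ for which $\dim M_{\emm} = N$. The restriction to $\dim M_\emm = N$ already incorporates Serre's vanishing \eqref{eq:45}: at an $\emm$ with $\dim M_\emm < N$ one has $\dim (\Z G)_\emm + \dim M_\emm < \dim(\Z\Gamma)_\emm = N+1$, which forces the local intersection number to vanish. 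Thus it suffices to prove that no such $\emm$ occurs when $\ep$ is not principal, i.e. that $\dim M_\emm < N$ for every maximal ideal $\emm \supseteq \ep$.

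First I would rewrite the local dimension in terms of the height of $\ep$. The ring $\Z\Gamma = \Z[t^{\pm 1}_1, \ldots, t^{\pm 1}_N]$ is regular, hence Cohen--Macaulay and catenary, of Krull dimension $N+1$, and every maximal ideal has height $N+1$ (this is the fact $\dim(\Z\Gamma)_\emm = N+1$ already used in the proof of Corollary \ref{t6.3}). Consequently, for any maximal $\emm \supseteq \ep$ the dimension formula in the Cohen--Macaulay local ring $(\Z\Gamma)_\emm$ gives
\[
\dim M_\emm = \dim (\Z\Gamma/\ep)_\emm = \dim (\Z\Gamma)_\emm - \tht\,\ep = (N+1) - \tht\,\ep ,
\]
independently of $\emm$. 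Hence $\dim M_\emm = N$ holds for some, equivalently every, such $\emm$ if and only if $\tht\,\ep = 1$.

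The second ingredient is that $\Z\Gamma$ is a unique factorization domain, being a localization of the polynomial UFD $\Z[t_1, \ldots, t_N]$. In a UFD a nonzero prime ideal has height $1$ if and only if it is principal: one direction is Krull's principal ideal theorem, the other is the standard characterization of height-one primes in a UFD. Since the zero ideal is principal as well, a non-principal prime $\ep$ must satisfy $\tht\,\ep \ge 2$. Combining this with the displayed formula yields $\dim M_\emm = (N+1) - \tht\,\ep \le N-1 < N$ for every maximal $\emm \supseteq \ep$, so the index set in \eqref{eq:47} is empty and $\chi(\Delta, \Z\Gamma/\ep) = 1$ as an empty product.

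The argument is short because the substantive work is already done in Corollary \ref{t6.3}; what remains is dimension bookkeeping. The step I would watch most closely is the uniform formula $\dim M_\emm = (N+1) - \tht\,\ep$: it relies on $\Z\Gamma$ being catenary and on every closed point of $\G = \spec \Z\Gamma$ having local dimension exactly $N+1$, so that no dimension is lost over $\Z$. Once this is in place, the identification of the contributing primes with the height-one primes, together with the UFD property turning ``height one'' into ``principal,'' delivers the statement immediately.
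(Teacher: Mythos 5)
Your proof is correct and follows essentially the same route as the paper: both arguments use that $\Z\Gamma$ is a UFD to conclude $\tht(\ep) \ge 2$ for non-principal primes, deduce $\dim(\Z\Gamma/\ep)_{\emm} \le N-1$ from a dimension formula, and then observe that the product in \eqref{eq:47} is empty. The only cosmetic difference is that you derive the local dimension as an exact equality via the Cohen--Macaulay property of $(\Z\Gamma)_{\emm}$, whereas the paper uses the global inequality $\dim \Z\Gamma/\ep + \tht(\ep) \le N+1$ together with $\dim(\Z\Gamma/\ep)_{\emm} \le \dim \Z\Gamma/\ep$; both are valid.
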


\begin{proof}
Since $\Z\Gamma$ is a Noetherian unique factorization domain, the non-principal prime ideals $\ep$ have height $\tht (\ep) \ge 2$. The dimension inequality (in fact an equality)
\[
\dim \Z\Gamma / \ep + \tht (\ep) \le \dim \Z\Gamma = N + 1
\]
therefore gives
\[
\dim (\Z\Gamma / \ep)_{\emm} \le \dim \Z\Gamma / \ep \le N-1 \quad \text{for} \; \emm \in \supp \Z\Gamma / \ep \; .
\]
Now the assertion follows from formula \eqref{eq:47} applied to $M = \Z\Gamma / \ep$.
\end{proof}

\begin{cor}
\label{t6.5}
Let $M$ be as in Corollary \ref{t6.3} and let $\ep_i = (f_i)$ for $i \in I \subset \{ 1 , \ldots , n \}$ be the principal prime ideals in \eqref{eq:48}. Then we have
\[
\chi (\Delta , M) = \prod_{i\in I} \chi (\Delta , \Z \Gamma / f_i \Z\Gamma) \quad \text{where} \; \chi (\Delta , \Z\Gamma / f_i \Z\Gamma) = |\Z G/ f_i \Z G| \; .
\]
\end{cor}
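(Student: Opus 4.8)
The plan is to split the statement into two parts: the reduction to principal primes, $\chi(\Delta, M) = \prod_{i \in I} \chi(\Delta, \Z\Gamma/f_i\Z\Gamma)$, and the evaluation $\chi(\Delta, \Z\Gamma/f_i\Z\Gamma) = |\Z G/f_i\Z G|$ for each $i \in I$.

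For the reduction I would start from the prime-filtration factorization \eqref{eq:48}, which writes $\chi(\Delta, M) = \prod_{i=1}^{n} \chi(\Delta, \Z\Gamma/\ep_i)$. By Proposition \ref{t6.4} every non-principal prime $\ep_i$ satisfies $\chi(\Delta, \Z\Gamma/\ep_i) = 1$, so those factors drop out and only the principal primes $\ep_i = (f_i)$, $i \in I$, survive. This gives the first part at once.

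For the evaluation, fix $i \in I$ and abbreviate $f = f_i$. Since $\Z\Gamma$ is an integral domain and $f \neq 0$, multiplication by $f$ is injective, so
\[
0 \longrightarrow \Z\Gamma \xrightarrow{f} \Z\Gamma \longrightarrow \Z\Gamma/f\Z\Gamma \longrightarrow 0
\]
is a free resolution of length one; it is also a free $\Z\Delta$-resolution because $\Z\Gamma$ is free over $\Z\Delta$. Applying $\Z \otimes_{\Z\Delta} -$ and using the identification $\Z \otimes_{\Z\Delta} \Z\Gamma = \Z G$ from the proof of Corollary \ref{t6.3}, I obtain the two-term complex $\Z G \xrightarrow{f} \Z G$ (with $f$ now its image in $\Z G$, acting by multiplication). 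Its homology yields
\[
H_0(\Delta, \Z\Gamma/f\Z\Gamma) = \Z G/f\Z G, \qquad H_1(\Delta, \Z\Gamma/f\Z\Gamma) = \Ker(f : \Z G \to \Z G),
\]
and $H_i = 0$ for $i \geq 2$.

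The main point --- and the only place the hypothesis on $M$ is needed --- is to show that $H_1$ vanishes. Here the support condition $\bbmu(\oQ) \cap (\supp M)(\oQ) = \emptyset$, inherited by $\Z\Gamma/f\Z\Gamma$ as noted before \eqref{eq:48}, gives via Proposition \ref{t6.2} that $\Z G/f\Z G$ is finite. Since $\Z G$ is free of rank $|G|$ over $\Z$, a $\Z$-linear endomorphism of it has finite cokernel if and only if its determinant is nonzero, i.e.\ if and only if it is injective; the equality of source and target rank is exactly what makes finiteness of the cokernel force injectivity. Hence $f$ is injective on $\Z G$, so $H_1 = \Ker(f) = 0$, and therefore $\chi(\Delta, \Z\Gamma/f\Z\Gamma) = |H_0| \, |H_1|^{-1} = |\Z G/f\Z G|$, which completes the argument.
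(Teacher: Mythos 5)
Your proposal is correct and follows essentially the same route as the paper: reduce to the principal primes via \eqref{eq:48} and Proposition \ref{t6.4}, then compute $H_\nu(\Delta,\Z\Gamma/f_i\Z\Gamma)$ from the two-term complex $\Z G \xrightarrow{f_i} \Z G$, using finiteness of $\Z G/f_i\Z G$ (Proposition \ref{t6.2}) to force injectivity of $f_i$ on $\Z G$. The only cosmetic difference is that you phrase the injectivity step via a nonvanishing determinant on the free $\Z$-module $\Z G$, whereas the paper argues that $f_i:\Q G\to\Q G$ is surjective and hence injective; these are the same observation.
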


\begin{proof}
The condition $\bbmu (\oQ) \cap \spec \Z\Gamma / \ep_i = \emptyset$ means that $\Z G \otimes_{\Z\Gamma} \Z \Gamma / \ep_i = \Z G / \ep_i \Z G$ is finite by Proposition \ref{t6.2}. For $\ep_i = (f_i)$ this shows that $f_i : \Q G \to \Q G$ is surjective and hence injective. Thus
\[
H_{\nu} (\Delta , \Z\Gamma/ f_i \Z \Gamma) = H_{\nu} (\Z G \xrightarrow{f_i} \Z G)
\]
is zero for $\nu \ge 1$ and equal to $\Z G / f_i \Z G$ for $\nu = 0$. Using Proposition \ref{t6.4} and formula \eqref{eq:48}, the assertion follows.
\end{proof}

We will now apply the preceding results to the calculation of $p$-adic entropy $h^{\per}_p$ as defined in formula \eqref{eq:40} or equivalently of $p$-adic $R$-torsion.

\begin{cor}
\label{t6.6}
Let $X$ be a $p$-adically expansive $\Gamma = \Z^N$-algebraic dynamical system and $M = \aX$ the corresponding $\Z\Gamma$-module. With notations as in Corollary \ref{t6.5} we have:
\[
h^{\per}_p = \tau^{\Gamma}_p (M) = \sum_{i \in I} \log_p \ddet_{\Gamma} f_i \; .
\]
In particular $h^{\per}_p = 0$ if all the prime ideals $\ep_i$ occuring in $M$ are non-principal.
\end{cor}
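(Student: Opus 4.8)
The plan is to read off the first equality $h^{\per}_p = \tau^{\Gamma}_p(M)$ directly from Theorem \ref{t5.3}: since $\Gamma = \Z^N$ and $M$ is $p$-adically expansive, Remark \ref{t4.4a} guarantees that $M$ is of type $FL$ and that $\varprojlim_i M_{p^i} = 0$, so the hypotheses of Corollary \ref{t4.4} are met and $h^{\per}_p$ exists and equals $\tau^{\Gamma}_p(M)$. Thus the only substantive point is the second equality $\tau^{\Gamma}_p(M) = \sum_{i\in I}\log_p\ddet_{\Gamma} f_i$, and for this I would exploit additivity of $\tau^{\Gamma}_p$ along the prime filtration of $M$.

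First I would invoke the filtration $0 = M_0 \subset \cdots \subset M_n = M$ with $M_i/M_{i-1}\cong \Z\Gamma/\ep_i$ used in \eqref{eq:48}. The key preliminary is that each graded piece $\Z\Gamma/\ep_i$ again satisfies the hypotheses of Corollary \ref{t4.4}: it is finitely generated, hence of type $FL$ by Remark \ref{t4.4a}, it has $\varprojlim_j (\Z\Gamma/\ep_i)_{p^j} = 0$ by Proposition \ref{t1.6} (as $\Z\Gamma$ is Noetherian), and it is $p$-adically expansive because $V(\ep_i)\subset\supp M$ forces $T^N_p\cap V(\ep_i)(\oQ_p)\subset T^N_p\cap(\supp M)(\oQ_p)=\emptyset$, which is the criterion of Theorem \ref{t4.5}. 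Hence $\tau^{\Gamma}_p(\Z\Gamma/\ep_i)$ is defined for every $i$, and additivity of $\tau^{\Gamma}_p$ in short exact sequences (equation \eqref{eq:23a} together with the remark following Theorem \ref{t4.3}) yields $\tau^{\Gamma}_p(M) = \sum_{i=1}^n \tau^{\Gamma}_p(\Z\Gamma/\ep_i)$.

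Next I would evaluate each summand. For a non-principal $\ep_i$, Proposition \ref{t6.4} gives $\chi(\Gamma_n,\Z\Gamma/\ep_i)=1$ for every $\Gamma_n$, so the limit formula of Corollary \ref{t4.4} forces $\tau^{\Gamma}_p(\Z\Gamma/\ep_i)=0$; only the indices $i\in I$ with $\ep_i=(f_i)$ principal survive. For such an $i$ the tautological resolution $0\to\Z\Gamma\xrightarrow{f_i}\Z\Gamma\to\Z\Gamma/f_i\Z\Gamma\to 0$, whose map $f_i$ is injective on the domain $\Z\Gamma$ and lies in $\GL_1(c_0(\Gamma))$ by $p$-adic expansiveness, identifies $\tau^{\Gamma}_p(\Z\Gamma/f_i\Z\Gamma)$ with $\log_p\ddet_{\Gamma} f_i$, exactly as recorded in the Example following Theorem \ref{t4.1} and in the proof of Proposition \ref{t5.4}. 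Summing over $i\in I$ gives $\tau^{\Gamma}_p(M)=\sum_{i\in I}\log_p\ddet_{\Gamma} f_i$, and the ``in particular'' clause is the case $I=\emptyset$.

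I do not expect a serious obstacle, since the corollary is essentially an assembly of earlier results; the one point requiring genuine care is the verification that every graded piece $\Z\Gamma/\ep_i$ inherits $p$-adic expansiveness and the Mittag--Leffler condition $\varprojlim_j (\Z\Gamma/\ep_i)_{p^j}=0$, because only then is $\tau^{\Gamma}_p$ defined on each piece and the additivity used in the reduction legitimate. A secondary consistency check is that the two descriptions of $\tau^{\Gamma}_p(\Z\Gamma/f_i\Z\Gamma)$ --- via the limit formula and via $\log_p\ddet_{\Gamma} f_i$ --- agree, which is already built into the compatibility established in Corollary \ref{t3.4} and Proposition \ref{t5.4}.
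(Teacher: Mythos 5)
Your proof is correct, and it assembles the same ingredients as the paper's, but it routes the decomposition through the torsion rather than through the Euler characteristics. The paper obtains the second equality by taking the renormalized $\log_p$-limit of the product formula $\chi (\Gamma_n , M) = \prod_{i \in I} \chi (\Gamma_n , \Z\Gamma / f_i \Z\Gamma)$ of Corollary \ref{t6.5} (itself built from the prime filtration \eqref{eq:48}, multiplicativity of $\chi (\Delta , \_)$ and the vanishing of Proposition \ref{t6.4}), and then identifies each limit $\lim_n (\Gamma : \Gamma_n)^{-1} \log_p |\Z G^{(n)} / f_i \Z G^{(n)}|$ with $\log_p \ddet_{\Gamma} f_i$ via the principal case; you instead write $\tau^{\Gamma}_p (M) = \sum_i \tau^{\Gamma}_p (\Z\Gamma / \ep_i)$ using \eqref{eq:23a} and evaluate each summand, invoking Proposition \ref{t6.4} together with the limit formula to kill the non-principal pieces. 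The two routes rest on exactly the same two pillars --- the filtration plus the intersection-theoretic vanishing behind Proposition \ref{t6.4}, and the computation $\tau^{\Gamma}_p (\Z\Gamma / f_i \Z\Gamma) = \log_p \ddet_{\Gamma} f_i$ --- and differ only in where additivity is applied. Your version carries a small extra cost, which you correctly flag and discharge via Theorem \ref{t4.5}: each graded piece $\Z\Gamma / \ep_i$ must itself be $p$-adically expansive for $\tau^{\Gamma}_p$ to be defined on it, whereas Corollary \ref{t6.5} only needs the weaker finiteness of $\Z G \otimes_{\Z\Gamma} \Z\Gamma / \ep_i$. For the iterated application of \eqref{eq:23a} one also needs the intermediate submodules $M_i$ of the filtration to be $p$-adically expansive; this follows by the same argument since $\supp M_i \subset \supp M$, so the gap is only cosmetic, but it is worth stating.
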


\begin{proof}
This follows from Theorem \ref{t5.3}, Propositions \ref{t6.1}, \ref{t6.2} and Corollary \ref{t6.5}
\end{proof}

\begin{rem}
For classical entropy we have the analogous formula
\[
h = \sum_{i \in I} \log \ddet_{\Nh\Gamma} f_i
\]
which holds even without expansiveness assumptions. See \cite[Corollary 18.5 and Proposition 18.6]{Sch} and use that the logarithm of the Fuglede--Kadison determinant $\log \det_{\Nh\Gamma} f_i$ equals the Mahler measure of $f_i$
\[
m (f_i) = \int_{T^N} \log |f_i| \, d\mu \; .
\]
Here $\mu$ is the Haar measure of $T^N$. This is proved using the Yuzvinskii addition formula for entropy and an argument based on the monotonicity of entropy. The latter is used to show that algebraic dynamical systems $X = (\Z \Gamma / \ep)^{\ast}$ for non-principal prime ideals have zero entropy. This argument does not transfer to $p$-adic entropy and we replaced it by the vanishing result \eqref{eq:45} for local intersection numbers.
\end{rem}

We now relate classical entropy with multiplicative Euler characteristics:

\begin{cor}
\label{t6.7}
Let $X$ be an expansive $\Gamma = \Z^N$-algebraic dynamical system. Then we have
\begin{equation}
\label{eq:49}
h = \lim_{n\to\infty} (\Gamma : \Gamma_n)^{-1} \log \chi (\Gamma_n , X) \quad \text{for} \; \Gamma_n \to 0 \; .
\end{equation}
\end{cor}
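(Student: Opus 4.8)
The plan is to reduce, via the prime filtration and multiplicativity, to a single principal prime, and then to recognize the renormalized logarithmic Euler characteristic as a Riemann sum converging to a Mahler measure.

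First I would pass from $X$ to $M = \aX$. By Proposition \ref{t5.2} we have $\chi (\Gamma_n , X) = \chi (\Gamma_n , M)$ for every $n$, so it suffices to analyse $\chi (\Gamma_n , M)$. Since $M$ is expansive, condition \eqref{eq:32} gives $T^N \cap (\supp M)(\C) = \emptyset$; as the torsion points $\bbmu (\oQ)$ lie in $T^N$, the hypotheses of Proposition \ref{t6.2} and Corollary \ref{t6.3} are met, so each $\chi (\Gamma_n , M)$ is a well-defined positive integer. Choosing a prime filtration $0 = M_0 \subset \dots \subset M_m = M$ with quotients $\Z\Gamma / \ep_i$ and using \eqref{eq:48}, I reduce to proving the limit formula for each $M = \Z\Gamma / \ep$; additivity of the classical entropy $h$ in short exact sequences (the Yuzvinskii addition formula, \cite{Sch}) does the same on the entropy side.

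Next I would treat the two kinds of primes separately. For a non-principal $\ep$, Proposition \ref{t6.4} gives $\chi (\Gamma_n , \Z\Gamma / \ep) = 1$ for all $n$, so the renormalized limit is $0$; on the other hand $(\Z\Gamma / \ep)^{\ast}$ has zero entropy by \cite{Sch}, so both sides agree. For a principal prime $\ep = (f)$, Corollary \ref{t6.5} identifies $\chi (\Gamma_n , \Z\Gamma / f\Z\Gamma) = |\Z G / f\Z G|$, where $G = \Gamma / \Gamma_n$. The key computation is then the determinant identity
\[
|\Z G / f \Z G| = \big| \ddet_{\Z} \big( f : \Z G \to \Z G \big) \big| = \prod_{\psi} |f (\zeta_{\psi})| \; ,
\]
where $\psi$ ranges over the characters of the finite abelian group $G$ and $\zeta_{\psi} = (\psi (t_1) , \dots , \psi (t_N)) \in T^N$ is the corresponding tuple of roots of unity; this holds because multiplication by $f$ on $\C G = \prod_{\psi} \C$ acts on the $\psi$-eigenline by the scalar $f (\zeta_{\psi})$, and because expansiveness forces $f$ to be non-vanishing at every $\zeta_{\psi} \in T^N$, so the map is injective and its cokernel has order equal to the absolute value of the determinant.

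Finally I would renormalize and pass to the limit. Dividing by $(\Gamma : \Gamma_n) = |G|$ gives
\[
(\Gamma : \Gamma_n)^{-1} \log \chi (\Gamma_n , \Z\Gamma / f\Z\Gamma) = \frac{1}{|G|} \sum_{\psi} \log |f (\zeta_{\psi})| \; .
\]
As $\Gamma_n \to e$, the finite subgroups $\{ \zeta_{\psi} \}$ dual to $\Gamma / \Gamma_n$ equidistribute on $T^N$ with respect to the Haar probability measure $\mu$, and since expansiveness makes $\log |f|$ continuous on the compact torus $T^N$, the right-hand side converges to $\int_{T^N} \log |f| \, d\mu = m (f)$. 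By \cite[Corollary 18.5 and Proposition 18.6]{Sch} one has $h = m (f) = \log \ddet_{\Nh\Gamma} f$ in the principal-prime case, so the two sides agree; summing over $i \in I$ yields \eqref{eq:49}. The main obstacle is the equidistribution step together with the passage of the limit through the finite sum; this is exactly where expansiveness is essential, since it removes the zeros of $f$ from $T^N$ and thereby turns a potentially singular integrand into a continuous one, so that the Riemann-sum convergence is elementary rather than requiring the more delicate analysis needed for general Mahler measures.
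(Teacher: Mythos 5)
Your proof is correct, and its overall skeleton is the same as the paper's: pass to $M=\aX$ via Proposition \ref{t5.2}, note that expansiveness (condition \eqref{eq:32}) puts you in the situation of Corollary \ref{t6.3} since $\bbmu(\oQ)\subset T^N$, reduce by the prime filtration and \eqref{eq:48} to single primes, kill the non-principal ones on both sides (Proposition \ref{t6.4} and zero entropy), and use Corollary \ref{t6.5} to get $\chi(\Gamma_n,\Z\Gamma/f\Z\Gamma)=|\Z G/f\Z G|$. Where you diverge is the final limit: the paper simply observes that $|\Z G/f\Z G|=|X_{f}^{\Gamma_n}|$ and quotes Schmidt's periodic-points formula \eqref{eq:50} for expansive principal actions, whereas you evaluate the limit from scratch by writing $|\Z G/f\Z G|=\prod_{\psi}|f(\zeta_{\psi})|$ and letting the dual groups equidistribute on $T^N$, then matching with $h=m(f)$ from \cite[Cor.\ 18.5, Prop.\ 18.6]{Sch}. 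Both routes ultimately lean on \cite{Sch} for the entropy side, but yours replaces the citation of the (nontrivial) theorem \eqref{eq:50} by an elementary Weyl-criterion argument: for fixed $0\neq m\in\Z^N$ the condition $\Gamma_n\to e$ forces $m\notin\Gamma_n$ eventually, so the character sums $|G|^{-1}\sum_{\psi}\zeta_{\psi}^{m}$ vanish for large $n$ and the normalized counting measures converge weakly to Haar measure, while expansiveness keeps $\log|f|$ continuous so the Riemann sums converge. This is in fact the computation the paper itself sketches in its closing Arakelov remark for $\Gamma_n=(n\Z)^N$; you have correctly generalized it to arbitrary $\Gamma_n\to e$, and it would be worth making the Weyl-criterion step explicit since that is the one place where the hypothesis $\Gamma_n\to e$ (as opposed to mere cofiniteness) is used.
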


\begin{rem}
In \cite[VI 21]{Sch} the formula
\begin{equation}
\label{eq:50}
h = \lim_{n\to\infty} (\Gamma : \Gamma_n)^{-1} \log |X^{\Gamma_n}|
\end{equation}
is proved for expansive $X$. Thus $\chi (\Gamma_n , X)$ can be replaced by $|H^0 (\Gamma_n , X)|$ and the limit remains the same. Formula \eqref{eq:49} is more natural than formula \eqref{eq:50} because both the entropy and the logarithmic Euler characteristics are additive in short exact sequences of algebraic dynamical systems. On the other hand, formula \eqref{eq:50} is more explicit.
\end{rem}

\begin{proofof} {\it Corollary \ref{t6.7}} 
By additivity of entropy and its vanishing for non-principal actions, we have
\[
h = \sum_{i\in I} h_{f_i} \quad \text{where} \; h_f = \; \text{entropy of} \; X_f := (\Z \Gamma / f \Z \Gamma)^{\ast} \; .
\]
Using Proposition \ref{t5.2} and Corollary \ref{t6.5} we find
\[
\chi (\Gamma_n , X) = \prod_{i \in I} |X^{\Gamma_n}_{f_i}| \; .
\]
Combining this with formula \eqref{eq:50} applied to $X = X_{f_i}$:
\[
h_{f_i} = \lim_{n\to\infty} (\Gamma : \Gamma_n)^{-1} \log |X^{\Gamma_n}_{f_i}|
\]
wo obtain \eqref{eq:49}.
\end{proofof}

We end with a remark about a connection to Arakelov theory \cite{So}. Embedding $\G = \G^N_{m, \Z}$ into $\Pa = \Pa^N_{\Z}$, the closed subscheme $\bbmu$ of $\G$ is also closed in $\Pa$ since it is finite over $\spec \Z$. For $M = \Z\Gamma / \ea$ consider $V (\ea) = \spec \Z \Gamma / \ea$ and its closure $\overline{V (\ea)}$ in $\Pa$. We have $\bbmu \cap V (\ea) = \bbmu \cap \overline{V (\ea)}$ since $\bbmu$ is closed in $\Pa$ and therefore the right side of the formula for $\log \chi (\Delta , M)$ obtained from \eqref{eq:47} is the sum of all the non-archimedean local Arakelov intersection numbers of $\bbmu$ and $\overline{V (\ea)}$ in $\Pa^N$. 

For principal $\ea = (f)$ the global intersection pairing on $\Pa^N$ with the Arakelov cycle $(\ddiv f , - \log |f|^2)$ is zero. Hence $\log \chi (\Delta , M)$ then equals the negative of the sum over the local archimedian intersection numbers. We assume $M = \Z\Gamma / f \Z \Gamma$ to be expansive here, i.e. $f (z) = 0$ for all $z \in T^N$. \\
The Archimedian contribution to the global intersection number is $\sum_{\zeta \in \bbmu (\C)} \log |f (\zeta)|$. For $\Delta = \Gamma_n = (n \Z)^N$ we therefore find
\[
(\Gamma : \Gamma_n)^{-1} \log \chi (\Gamma_n , \Z \Gamma / f \Z \Gamma)) = n^{-N} \sum_{\zeta \in \mu_n (\C)^N} \log |f (\zeta)|
\]
and this converges for $n \to \infty$ to
\[
\int_{T^N} \log |f| \, d\mu = \log \ddet_{\Nh \Gamma} f = h \; ,
\]
as it has to.

\end{document}